\theoremstyle{plain}
\newtheorem{theorem}[equation]{Theorem}
\newtheorem{prop}[equation]{Proposition}
\theoremstyle{definition}
\newtheorem{definition}[equation]{Definition}
\newtheorem{example}[equation]{Example}
\newtheorem{question}[equation]{Question}
\newtheorem{remark}[equation]{Remark}
\numberwithin{equation}{section}
\newcommand{\Cat}{\mathcal Cat}
\newcommand{\css}{\mathcal{CSS}}
\newcommand{\Deltaop}{\Delta^{\op}}
\newcommand{\disc}{\operatorname{disc}}
\newcommand{\ev}{\operatorname{ev}}
\newcommand{\heq}{\operatorname{heq}}
\newcommand{\Ho}{\operatorname{Ho}}
\newcommand{\Hom}{\operatorname{Hom}}
\newcommand{\id}{\operatorname{id}}
\newcommand{\Map}{\operatorname{Map}}
\newcommand{\map}{\operatorname{map}}
\newcommand{\nerve}{\operatorname{nerve}}
\newcommand{\ob}{\operatorname{ob}}
\newcommand{\op}{\operatorname{op}}
\newcommand{\Qcat}{\qcat}
\newcommand{\qcat}{\mathcal{QC}at}
\newcommand{\Se}{\mathcal Se}
\newcommand{\Secat}{\mathcal Se\mathcal Cat}
\newcommand{\sesp}{\mathcal Se\mathcal Sp}
\newcommand{\Sets}{\mathcal Sets}
\newcommand{\Sing}{\operatorname{Sing}}
\newcommand{\snerve}{\operatorname{snerve}}
\newcommand{\SSets}{\mathcal{SS}ets}
\newcommand{\Thetanop}{\Theta_n^{\op}}
\newcommand{\Top}{\mathcal Top}
\newcommand{\umap}{\underline{\map}}
\newcommand{\uMap}{\underline{\Map}}
\begin{document}

\title[$(\infty,n)$-categories]{A survey of models for $(\infty, n)$-categories}

\author[J.E. Bergner]{Julia E. Bergner}

\address{Department of Mathematics, University of Virginia, Charlottesville, VA 22904}

\email{jeb2md@virginia.edu}

\date{\today}

\subjclass[2010]{55U35, 55U40, 18D05, 18D15, 18D20, 18G30, 18G55, 18C10}

\keywords{$(\infty, n)$-categories, model categories, $\Theta_n$-spaces, enriched categories}

\thanks{The author was partially supported by NSF grant DMS-1659931, the Isaac Newton Institute for Mathematical Sciences, Cambridge, during the program ``Homotopy Harnessing Higher Structures", supported by EPSRC grant no EP/K032208/1, and the AWM Michler Prize.}

\begin{abstract}
We give describe several models for $(\infty,n)$-categories, with an emphasis on models given by diagrams of sets and simplicial sets.  We look most closely at the cases when $n \leq 2$, then summarize methods of generalizing for all $n$.
\end{abstract}

\maketitle

\section{Introduction: What should an $(\infty,n)$-category be?}

When we say that we want to find ``models" for $(\infty,n)$-categories, we are looking for concrete mathematical objects which encode this desired structure.  But first, we need to answer a more basic question: What is an $(\infty,n)$-category anyway?  A short answer is that it is should be a higher category which is given up to homotopy, in some sense.  Alternatively, it is a higher category in which sufficiently high-level morphisms are invertible.  

To give a better description of what $(\infty, n)$-categories should be, let us first consider what is meant by a higher category more generally.  Recall that a category in the usual sense consists of objects, morphisms between objects, and a composition law for morphisms, such that each object has an identity morphism, and such that composition is associative.  As we move to higher categories, for specificity let us refer to morphisms as \emph{1-morphisms}.  

The essential idea behind a 2-category is that 1-morphisms which share source and target objects can have \emph{2-morphisms} between them: 
\[\xymatrix{{\bullet} \ar@/^1pc/[r]^{}="30" \ar@/_1pc/[r]^{}="31" \ar@{=>}"30";"31"
& {\bullet.}} \]
These 2-morphisms should themselves have a composition law which is associative; each 1-morphism should also have an associated identity 2-morphism.  One could then imagine extending these ideas for successively higher morphisms.  If we stop at some dimension $n$, we call the resulting structure an $n$-\emph{category}; if we continue to arbitrary $n$ we obtain an $\infty$-\emph{category}.

When the unitality of identity morphisms and associativity hold strictly, it is not hard to define these structures concretely.  A \emph{2-category} can be defined to be a category \emph{enriched in categories}.  In other words, it consists of objects, together with, for any pair of objects $(x,y)$, a category of morphisms from $x$ to $y$. The objects of this category define 1-morphisms $x \rightarrow y$, and the morphisms between them are 2-morphisms.  

More generally, we can define an $n$-\emph{category} to be a category enriched in $(n-1)$-categories.  Defining $\infty$-categories is more delicate, however.

The problem is that these strict conditions do not hold for many interesting examples.  Often associativity, for example, does not hold strictly, but rather only up to isomorphism.  For such a structure, we need to ask that various coherence laws hold, leading to the notion of a \emph{weak $n$-category}.   In the case of a weak 2-category, or \emph{bicategory}, the required coherence laws are manageable to write down.  The situation gets successively more complicated for higher weak $n$-categories.  While there are many different proposed definitions of such structures, they are, in general, not known to be equivalent to one another.  A good introduction to many of these approaches can be found in \cite{leinstersurvey}.

Much more progress has been made when we look at higher categories from a homotopical perspective, leading to the notion of an $(\infty,n)$-category.  Heuristically, an $(\infty,n)$-category should be a weak $\infty$-category such that all $i$-morphisms are weakly invertible for $i>n$.  Since we already suggested that strict $\infty$-categories are not so straightforward, never mind weak ones, it is not immediately clear that such a notion should be tractable in any way.  Yet, starting with $(\infty,0)$-categories, or $\infty$-groupoids, which can be defined much more easily, we can again use a process of enrichment to make an inductive definition: an $(\infty,n)$-category is a category enriched in $(\infty, n-1)$-categories.  As long as we have a good model for $(\infty,0)$-categories to begin, we have a very precise way to define higher $(\infty,n)$-categories.

However, just as we often want weak, rather than strict, $n$-categories, we really want $(\infty,n)$-categories to be defined via some kind of weak enrichment, so that composition might only be defined up to homotopy and the associativity and unit conditions need not hold in a strict way but rather in some more homotopical sense.  While it might seem that we are back to the same kinds of issues as before, these structures are remarkably more manageable than weak $n$-categories.  Such weaker models for $(\infty,1)$-categories can be defined using homotopy-theoretic tools such as simplicial objects.  The generalization of these structures to higher $(\infty,n)$-categories is difficult work and can be done in myriad ways, some of which are still work in progress, but we have a number of models which are well-understood and known to be equivalent to one another.

In this this paper, look at some of these models from a homotopy-theoretic point of view.  We look at the associated model category structures for each, and the comparisons between them in the form of Quillen equivalences.

After introducing $(\infty,0)$-categories (and in doing so, reviewing some basic ideas in simplicial homotopy theory) and various models for $(\infty,1)$-categories, we focus primarily on $(\infty,2)$-categories.  By emphasizing lower dimensions, we hope the reader can grasp the basic ideas without immediately drowning in the abstraction of the general case.  At the end of the paper, we discuss fully general $(\infty,n)$-categories, at least for some of the known definitions.

It is important to remark here that we by no means cover all models or approaches to the subject.  To keep this paper to a reasonable length, we have chosen to focus on models given by diagrams of sets or simplicial sets, and from the point of view of model category structures.  In particular, we do not look at the axiomatic treatments of To\"en \cite{toen} and of Barwick and Schommer-Pries \cite{bsp}, nor at the new model-independent approaches of Riehl and Verity \cite{rvbook}, \cite{rv}.  Other models we omit include the $n$-relative category model of Barwick and Kan \cite{bk}, \cite{bknrel}, the $n$-complicial sets of Verity \cite{verity} and their recent comparison with other kinds of diagrams \cite{or}, the more geometric approach of Ayala, Francis, and Rozenblyum \cite{afr}, and the variants building on marked simplicial sets of Lurie in \cite{luriegc}.  These papers comprise important work in the subject and we hope the introduction we have given here will inspire the reader to look into them more closely.

\section{Some model category background}

Our approach to $(\infty,n)$-categories is in the framework of model categories.  The definitions we make here do not need this extra structure, but from the point of view of homotopy theory it is valuable to have it nonetheless.  The reader only interested in the ways to define $(\infty,n)$-categories themselves can safely ignore this language and regard it as means for making the formal comparisons.  In this section, we summarize some of the model category language that is used.  

Let us start with a terse version of the definition; good references include the surveys \cite{ds} and \cite{gs} or the books \cite{hirsch} and \cite{hovey}.

\begin{definition}
A \emph{model category} is a category $\mathcal M$, possessing all small limits and colimits, together with three distinguished classes of morphisms, called \emph{weak equivalences}, \emph{fibrations}, and \emph{cofibrations}, satisfying several axioms.  An \emph{acyclic (co)fibration} is a (co)fibration which is also a weak equivalence.
\end{definition}

The existence of limits and colimits in $\mathcal M$ guarantees the existence of an initial object $\varnothing$ and a terminal object $\ast$.  

\begin{definition}
	An object $X$ of a model category $\mathcal M$ is \emph{cofibrant} if the unique morphism $\varnothing \rightarrow X$ is a cofibration.  Dually, $X$ is \emph{fibrant} if the unique morphism $X \rightarrow \ast$ is a fibration.
\end{definition}

The structure of a model category enables one to have a well-defined homotopy category without running into set-theoretic obstacles.  On the one hand, the critical information of a model category is the collection of weak equivalences; any two model categories with the same weak equivalences have equivalent homotopy categories.  On the other hand, the fibrant and cofibrant objects are important in the construction of the specific homotopy category associated to a given model category.

The following definition is our means of comparison between model categories.

\begin{definition}
	Let $\mathcal M$ and $\mathcal N$ be model categories.  A \emph{Quillen pair} is an adjoint pair of functors
	\[ F \colon \mathcal M \rightleftarrows \mathcal N \colon G \]
	such that the left adjoint $F$ preserves cofibrations and the right adjoint $G$ preserves fibrations.  It is a \emph{Quillen equivalence} if, additionally, a morphism $FX \rightarrow Y$ is a weak equivalence in $\mathcal N$ if and only if the corresponding morphism $X \rightarrow GY$ is a weak equivalence in $\mathcal M$.
\end{definition}

Throughout this paper, our goal is to describe model structures on certain categories in such a way that the fibrant and cofibrant objects model $(\infty, n)$-categories.  We then give Quillen equivalences between these
 model categories, which demonstrate that their corresponding models for $(\infty,n)$-categories really do capture the same structure.

Many of these models use the framework of simplicial sets and more general simplicial objects, so let us briefly review these ideas.  Let $\Delta$ be the category of finite ordered sets $[n] = \{0 \leq 1 \leq \cdots \leq n\}$ and order-preserving functions. 

\begin{definition}
A \emph{simplicial set} is a functor $\Deltaop \rightarrow \Sets$.
\end{definition}

Three critical examples of simplicial sets are the $n$-simplex $\Delta[n]$, which is the representable functor $\Hom_\Delta(-, [n])$, its boundary $\partial \Delta[n]$, in which the non-degenerate simplex in degree $n$ has been removed, and, for any $0 \leq k \leq n$, the $k$-horn $V[n,k]$, for which the simplex in degree $n-1$ opposite the vertex $k$ has also been removed.  

Given any simplicial set $K$, one can obtain from it a topological space $|K|$ via geometric realization.  This functor has a right adjoint, taking a topological space $X$ to the singular set $\Sing(X)$.  

\begin{theorem} \cite[II.3]{quillen} \label{ssetstop}
There is a model structure $\SSets$ on the category of simplicial sets with weak equivalences the maps whose geometric realizations are weak homotopy equivalences, and there is a model structure on the category of topological spaces in which the weak equivalences are the weak homotopy equivalences.  The adjoint pair $|-| \colon \SSets \leftrightarrows \Top \colon \Sing$ is a Quillen equivalence of model categories.
\end{theorem}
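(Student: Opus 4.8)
The plan is to produce both model structures using the theory of cofibrantly generated model categories, then check the Quillen pair condition by inspecting the behavior of $|-|$ on generators, and finally upgrade this to a Quillen equivalence by showing that the unit and counit of the adjunction are weak equivalences. To set up the model structure on topological spaces (working with compactly generated spaces, so that the domains of the generators are small and so that $|-|$ preserves finite products), one declares the weak equivalences to be the weak homotopy equivalences, the fibrations to be the Serre fibrations, and the cofibrations to be the maps with the left lifting property against acyclic fibrations; this is a cofibrantly generated model structure with generating cofibrations the sphere inclusions $S^{n-1} \hookrightarrow D^n$ and generating acyclic cofibrations the endpoint inclusions $D^n \hookrightarrow D^n \times [0,1]$, the factorizations being produced by the small object argument. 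For simplicial sets, one declares the cofibrations to be the monomorphisms, the weak equivalences to be the maps whose geometric realization is a weak homotopy equivalence, and the fibrations to be the maps with the right lifting property against acyclic cofibrations; here the generating cofibrations are the boundary inclusions $\partial \Delta[n] \hookrightarrow \Delta[n]$, and the horn inclusions $V[n,k] \hookrightarrow \Delta[n]$ generate, under saturation, the class of anodyne extensions, which one must identify with the acyclic cofibrations.

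To see that $(|-|, \Sing)$ is a Quillen pair, it suffices --- since both structures are cofibrantly generated and $|-|$, being a left adjoint, preserves colimits --- to observe that $|-|$ sends each generating cofibration to a cofibration and each generating acyclic cofibration to an acyclic cofibration. Indeed, the realization of $\partial \Delta[n] \hookrightarrow \Delta[n]$ is homeomorphic to $S^{n-1} \hookrightarrow D^n$, and the realization of a horn inclusion $V[n,k] \hookrightarrow \Delta[n]$ is the inclusion of a union of faces of the topological $n$-simplex, which is a deformation retract and hence an acyclic cofibration. Equivalently, $\Sing$ carries Serre fibrations to Kan fibrations and acyclic Serre fibrations to acyclic Kan fibrations.

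For the Quillen equivalence, observe that in the Serre model structure every space is fibrant and in $\SSets$ every object is cofibrant, so the derived unit and counit coincide with the ordinary unit $\eta_K \colon K \to \Sing|K|$ and counit $\varepsilon_X \colon |\Sing X| \to X$. The essential geometric input is that $\varepsilon_X$ is a weak homotopy equivalence for every space $X$: the source $|\Sing X|$ is a CW complex, and one shows $\varepsilon_X$ is an isomorphism on all homotopy groups using that any continuous map from a finite simplicial complex into $X$, and any homotopy between two such maps, is the realization of a map of simplicial sets. Granting this, the unit is handled formally: the triangle identity gives $\varepsilon_{|K|} \circ |\eta_K| = \id_{|K|}$, so since $\varepsilon_{|K|}$ is a weak homotopy equivalence, two-out-of-three forces $|\eta_K|$ to be one as well, which is precisely the statement that $\eta_K$ is a weak equivalence in $\SSets$. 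With the unit and counit both weak equivalences, the Quillen equivalence criterion is satisfied.

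The main obstacle is establishing the model structure on $\SSets$ --- in particular, verifying that the fibrations defined via lifting against acyclic cofibrations are exactly the Kan fibrations, that the acyclic fibrations are exactly the maps with the right lifting property against all boundary inclusions, and that the anodyne extensions coincide with the acyclic cofibrations. These statements are interdependent, and the crux is the nontrivial theorem that $|-|$ sends Kan fibrations to Serre fibrations, which is what allows one to transfer information about weak homotopy equivalences across the adjunction; its proof rests on a careful analysis of the anodyne class and of how $|-|$ interacts with products. The remaining verifications --- the model category axioms on $\Top$, the small object arguments, the compatibility $|K \times L| \cong |K| \times |L|$, and the routine retract and lifting arguments --- are comparatively straightforward.
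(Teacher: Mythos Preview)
The paper does not supply its own proof of this theorem; it is stated as a classical result with a citation to Quillen's \emph{Homotopical Algebra}, and no argument is given beyond that reference. So there is nothing in the paper to compare your proposal against directly.

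That said, your sketch follows the standard modern account of Quillen's result and is essentially correct as an outline: build both model structures via cofibrant generation, check the Quillen pair condition on generators, and then verify the equivalence by showing the unit and counit are weak equivalences using the triangle identity and two-out-of-three. You are also right to flag where the real work lies: the identification of acyclic cofibrations with anodyne extensions and, above all, the theorem that geometric realization takes Kan fibrations to Serre fibrations. One small organizational remark: Quillen's original treatment defines the fibrations in $\SSets$ to be the Kan fibrations from the outset, rather than defining them by lifting against acyclic cofibrations and then proving the two classes coincide; your phrasing front-loads a nontrivial identification that Quillen sidesteps by choice of definition, though of course the substantive content is the same either way.
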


We also want to consider further structure on some of our model categories; we do not give full details of the definitions here but only the main idea. 

\begin{definition} \cite[9.1.6]{hirsch}
A model category $\mathcal M$ is \emph{simplicial} if, for any two objects $X$ and $Y$ of $\mathcal M$, there is a simplicial set $\Map(X,Y)$ of morphisms from $X$ to $Y$, and such that this simplicial structure is compatible with the model structure.
\end{definition}

\begin{definition} \cite[2.2]{rezktheta}
	A model category $\mathcal M$ is \emph{cartesian} if its underlying category is closed symmetric monoidal via the cartesian product and this structure is compatible with the model structure.
\end{definition}

Many of the model structures that we consider here are given by localization of a known model structure.  The idea is that we start with a model structure, then choose some set of morphisms that we would like to become weak equivalences.  Doing so typically forces many more morphisms to become weak equivalences than only those in the given set.  The theory of localizations of model categories, and conditions under which they exist, can be found in \cite{hirsch} or \cite{barwick}.

The definition of the localization of a model category makes use of \emph{homotopy mapping spaces}, which can be defined for any model category $\mathcal M$.  The idea is that, given two objects $X$ and $Y$ of a model category, one can define a simplicial set $\Map^h(X,Y)$ which behaves like the mapping space $\Map(X,Y)$ in a simplicial model category yet is homotopy invariant and defined even if $\mathcal M$ is not simplicial.

\begin{definition}
\begin{enumerate}
    \item Let $\mathcal M$ be a model category and $S$ a set of morphisms in $\mathcal M$.  A fibrant object $X$ of $\mathcal M$ is $S$-\emph{local} if 
    \[ \Map^h(B,X) \rightarrow \Map^h(A,X) \]
    is a weak equivalence of simplicial sets for every map $A \rightarrow B$ in $S$.
    
    \item A map $C \rightarrow D$ in $\mathcal M$ is an $S$-\emph{local equivalence} if
    \[ \Map^h(D,X) \rightarrow \Map^h(C,X) \]
    is a weak equivalence for every $S$-local object $X$.
\end{enumerate}
\end{definition}

All of the model structures we consider here satisfy the hypotheses of the following theorem, which we do not state in full detail.

\begin{theorem} \cite[4.1.1]{hirsch}
If $\mathcal M$ is a sufficiently nice model category and $S$ is a set of maps in $\mathcal M$, then there exists a model structure on the same underlying category of $\mathcal M$ in which the weak equivalences are the $S$-local equivalences, the cofibrations are those of $\mathcal M$, and the fibrant objects are the $S$-local objects.
\end{theorem}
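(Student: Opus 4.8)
The plan is to reduce the statement to the recognition theorem for cofibrantly generated model categories, so that the real work becomes the production of suitable generating sets. Throughout I take ``sufficiently nice'' to mean that $\mathcal M$ is left proper and cellular (or combinatorial), with a set $I$ of generating cofibrations and a set $J$ of generating acyclic cofibrations; these are the hypotheses under which the small object argument applies and the cardinality estimates below are available.

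First I would record the formal closure properties of the class $W_S$ of $S$-local equivalences. It contains every weak equivalence of $\mathcal M$, since such a map induces a weak equivalence on $\Map^h(-,X)$ for all $X$; it satisfies two-out-of-three and is closed under retracts, inherited from weak equivalences of simplicial sets; and, crucially using left properness, it is closed under pushout along cofibrations and under transfinite composition. The cofibrations of the localized structure are declared to be those of $\mathcal M$, so its acyclic fibrations must coincide with the maps having the right lifting property against $I$, namely the acyclic fibrations of $\mathcal M$; the content of the theorem is therefore the identification of the acyclic cofibrations.

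The heart of the construction is a functorial $S$-localization. Using cosimplicial resolutions of the domains and codomains of the maps of $S$, one assembles a set $\Lambda(S)$ of ``horns on $S$'' and runs the small object argument to obtain a functor $X \mapsto L_S X$ together with a natural map $X \to L_S X$ which is an $S$-local equivalence with $S$-local target; a fibrant object $X$ is then $S$-local exactly when $X \to L_S X$ is a weak equivalence of $\mathcal M$. The key quantitative input --- and the step I expect to be the main obstacle --- is a Bousfield--Smith cardinality argument producing a regular cardinal $\kappa$ for which the property of being an $S$-local equivalence is detected through maps with $\kappa$-small domains and codomains. This is what lets one generate the otherwise proper class of acyclic cofibrations by a set: define $J_S$ to consist of $J$ together with mapping-cylinder factorizations of a set of representatives for the isomorphism classes of cofibrations between $\kappa$-small objects that lie in $W_S$.

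Finally I would verify the hypotheses of the recognition theorem \cite[11.3.1]{hirsch} for the pair $(I, J_S)$: domains are small by cellularity; every relative $J_S$-cell complex is a cofibration and an $S$-local equivalence, by the closure properties above (here left properness enters a second time); every map with the right lifting property against $I$ is an $S$-local equivalence, because the weak equivalences of $\mathcal M$ already are; and the delicate mixed condition --- that a map with the right lifting property against $J_S$ which is also an $S$-local equivalence must have the right lifting property against $I$ --- is precisely where the size bound on $J_S$ is used. This yields the asserted model structure. That its fibrations are the maps with the right lifting property against $J_S$, and that its fibrant objects are exactly the $S$-local objects, then follows formally: a fibrant object admits a retraction of its acyclic cofibration $X \to L_S X$, and $S$-locality is closed under retracts.
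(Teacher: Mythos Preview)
The paper does not give its own proof of this statement; it merely records the theorem with a citation to Hirschhorn \cite[4.1.1]{hirsch} and moves on. Your outline is a faithful sketch of the argument carried out in that reference: interpret ``sufficiently nice'' as left proper and cellular, build the $S$-localization functor from horns on $S$ via the small object argument, run the Bousfield--Smith cardinality argument to obtain a \emph{set} $J_S$ generating the $S$-local acyclic cofibrations, and verify Kan's recognition criteria. There is nothing to compare against in the paper itself, and your sketch matches the cited source in both strategy and emphasis, correctly flagging the cardinality step as the technical crux.
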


Finally, we give a brief discussion of model structures on categories of diagrams of simplicial sets.

\begin{theorem} \cite[11.6.1]{hirsch}
Let $\mathcal C$ be a small category and $\SSets^\mathcal C$ the category of functors $\mathcal C \rightarrow \SSets$.  There is a model structure, called the \emph{projective model structure} on this category, in which the weak equivalences and fibrations $X \rightarrow Y$ are given by weak equivalences and fibrations of simplicial sets $X(c) \rightarrow Y(c)$ for every object $c$ of $\mathcal C$.
\end{theorem}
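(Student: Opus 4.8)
The plan is to obtain the projective model structure by \emph{transferring} the model structure on simplicial sets across a suitable adjunction, using the standard recognition theorem for cofibrantly generated model categories (see, e.g., \cite{hirsch}). Recall that $\SSets$ is cofibrantly generated, with generating cofibrations $I = \{\partial\Delta[n] \hookrightarrow \Delta[n] \mid n \geq 0\}$ and generating acyclic cofibrations $J = \{V[n,k] \hookrightarrow \Delta[n] \mid n \geq 1,\ 0 \leq k \leq n\}$. A small product of cofibrantly generated model categories is again cofibrantly generated, so the product category $\prod_{c \in \ob \mathcal C} \SSets$ carries a model structure whose weak equivalences and fibrations are detected in each factor and whose generating (acyclic) cofibrations are the maps that are a generator in one coordinate and an identity in all others.

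For each object $c$ of $\mathcal C$ the evaluation functor $\ev_c \colon \SSets^{\mathcal C} \to \SSets$ admits a left adjoint $F_c$, given explicitly by $F_c(K)(d) = \coprod_{\Hom_{\mathcal C}(c,d)} K$, the free diagram on $K$ ``concentrated at $c$''. Assembling these gives an adjoint pair
\[ F \colon \prod_{c \in \ob \mathcal C} \SSets \;\rightleftarrows\; \SSets^{\mathcal C} \colon G, \qquad G = \prod_{c} \ev_c, \quad F\bigl((K_c)_c\bigr) = \coprod_c F_c(K_c). \]
By definition, a map $X \to Y$ of diagrams is a levelwise weak equivalence, respectively a levelwise fibration, precisely when $GX \to GY$ is a weak equivalence, respectively a fibration, in the product model category; this is the class we want to be the weak equivalences and fibrations of $\SSets^{\mathcal C}$. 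The recognition theorem says that these classes, together with the maps having the left lifting property against the levelwise acyclic fibrations, form a model structure on $\SSets^{\mathcal C}$ (which is complete and cocomplete, colimits being computed objectwise) provided two hypotheses hold: the sets $FI$ and $FJ$ permit the small object argument, and every relative $FJ$-cell complex is a levelwise weak equivalence.

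The first hypothesis is routine: the domains of the maps in $FI$ and $FJ$ are free diagrams on the small simplicial sets $\partial\Delta[n]$ and $V[n,k]$, and smallness is inherited, so the small object argument applies. The second hypothesis — the \emph{acyclicity condition} — is the only step with genuine content, and is where I expect the real work to be. The key point is that each evaluation functor $\ev_d \colon \SSets^{\mathcal C} \to \SSets$ preserves all colimits, since colimits of functors are formed objectwise. Consequently, if $X \to Y$ is a relative $FJ$-cell complex, then $\ev_d(X) \to \ev_d(Y)$ is a transfinite composite of pushouts of maps of the form
\[ \ev_d\bigl(F_c(V[n,k]) \to F_c(\Delta[n])\bigr) \;=\; \coprod_{\Hom_{\mathcal C}(c,d)}\bigl(V[n,k] \hookrightarrow \Delta[n]\bigr), \]
each of which is a coproduct of generating acyclic cofibrations of $\SSets$, hence an acyclic cofibration. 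Since acyclic cofibrations of $\SSets$ are stable under pushout and transfinite composition, $\ev_d(X) \to \ev_d(Y)$ is an acyclic cofibration for every $d$, so $X \to Y$ is a levelwise weak equivalence, as needed. This verifies the hypotheses of the recognition theorem and yields the projective model structure; moreover $FI$ and $FJ$ serve as generating cofibrations and generating acyclic cofibrations, and the cofibrations (the ``projective cofibrations'') are exactly the retracts of relative $FI$-cell complexes.
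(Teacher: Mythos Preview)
Your argument is correct: this is exactly the standard transfer (right-induced) proof via Kan's recognition theorem, and the acyclicity check you give---evaluation preserves colimits, so a relative $FJ$-cell complex becomes levelwise a relative $J$-cell complex---is the heart of it. Note, however, that the paper does not supply its own proof of this statement; it merely cites \cite[11.6.1]{hirsch}, and what you have written is essentially the argument found there, so there is nothing further to compare.
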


There is likewise an \emph{injective} model structure, where the weak equivalences and cofibrations are defined levelwise.  However, in all cases we consider here, this model structure coincides with the \emph{Reedy model structure}, which is often more convenient in practice.  We refer the reader to \cite{reedy} or \cite[15.3]{hirsch} for more details.  In this paper, we are interested in \emph{simplicial spaces}, or functors $\Deltaop \rightarrow \SSets$, as well as the variants $\Theta_n^{\op} \rightarrow \SSets$ which we introduce later in the paper. 

\section{$(\infty,0)$-categories}

Using our strategy given in the introduction, our first task is to give a concrete model for $(\infty,0)$-categories.  By definition, an $(\infty, 0)$-category should be a weak $\infty$-groupoid: a weak $\infty$-category in which every $i$-morphism, for every $i \geq 1$, is weakly invertible.  We make the following definition.

\begin{definition}
An $(\infty,0)$-\emph{category} is a topological space.
\end{definition}

Why is this definition sensible?  Given a topological space $X$, we can think of the points of $X$ as objects, and the paths between points as 1-morphisms. Then a homotopy between two paths with the same endpoints can be thought of as a 2-morphism, and we can continue to take homotopies between homotopies to make sense of $i$-morphisms for all $i \geq 1$.  Since paths and homotopies are invertible up to homotopy, we get a weak $\infty$-groupoid.

\begin{remark}
We have chosen one particular approach in this regard, following work such as \cite{arahh} or \cite{lurietft}.  A general principle in higher category theory, called the \emph{Homotopy Hypothesis}, is that weak $n$-groupoids model $n$-types, or topological spaces with nontrivial homotopy groups only in degrees $n$ or lower.  It stands to reason, then, that a weak $\infty$-groupoid should be a topological space.  We have chosen to take this principle so seriously that we take it as our definition.  One can just as well take a more categorical definition of what an $\infty$-groupoid should be and then try to prove the Homotopy Hypothesis for that particular definition; for example see \cite{bpsegal} or \cite{cisinski}.
\end{remark}

However, as in many situations in homotopy theory, it is preferable to work in the setting of simplicial sets rather than that of topological spaces.  The Quillen equivalence of Theorem \ref{ssetstop} tells us that topological spaces and simplicial sets have the same homotopy theory.  In practice, one might prefer one or the other, but from the point of view of homotopy theory they are equivalent.  Thus, we can also consider simplicial sets as models for $\infty$-groupoids.  However, it is preferable to restrict to the simplicial sets which are both fibrant and cofibrant; all objects are cofibrant, but it is really only the fibrant objects, the Kan complexes, which best model $\infty$-groupoids.

\begin{definition}
A simplicial set $K$ is a \emph{Kan complex} if a lift exists in any diagram of the form
\[ \xymatrix{V[n.k] \ar[d] \ar[r] & K \\
\Delta[n] \ar@{-->}[ur] } \]
where $n \geq 1$ and $0 \leq k \leq n$.
\end{definition}

The inclusions $V[n,k] \rightarrow \Delta[n]$ are called \emph{horn inclusions}.  To get an idea of what this lifting property means, let us look at the case when $n=2$.
The horn $V[2,0]$ can be depicted as
\[ \xymatrix{& 1  & \\
0 \ar[rr] \ar[ur] && 2} \] whereas $V[2,1]$ looks like
\[ \xymatrix{& 1  \ar[dr] & \\
0 \ar[ur] && 2} \] and finally $V[2,2]$ looks like
\[ \xymatrix{& 1  \ar[dr] & \\
0 \ar[rr] && 2.} \]
Having a lift with respect to $V[2,1]$ tells us that $K$ has composition, in the sense that any two 1-simplices, of which the source of one is the target of the other, can be filled to a 2-simplex; we think of the additional face as a composite of the original two 1-simplices.  However, this composition need not be unique.  In higher dimensions, having a lift with respect to the analogous \emph{inner horns}, for which $0<k<n$, gives information about composites for longer strings of 1-simplices, and about associativity of composition, at least up to homotopy.

The \emph{outer horns}, however, play a different role.  For $n=2$, the existence of lifts when $k=0$ and $k=2$ demonstrate the existence of left and right inverses to a given 1-simplex.  Thus, these lifts show that a Kan complex not only behaves like a category up to homotopy, but moreover like a groupoid up to homotopy.  This property agrees with our argument above that a Kan complex should model an $\infty$-groupoid.  Indeed, one can make sense of paths and homotopies and homotopies between homotopies, just as we do in a topological space, to think of a Kan complex as an $\infty$-groupoid.

Now that we have good ways to think about $(\infty,0)$-categories, we are ready to move up to $(\infty,1)$-categories.

\section{$(\infty,1)$-categories}

Following our principle that any $(\infty, n)$-category should be a category enriched in $(\infty,n-1)$-categories, we can take categories enriched in topological spaces or categories enriched in simplicial sets as a model for $(\infty,1)$-categories.

\begin{definition}
A \emph{simplicial category} is a category enriched in simplicial sets.  In other words, it has a collection of objects, together with, for any pair $(x,y)$ of objects, a simplical set $\Map(x,y)$, together with a compatible composition law.
\end{definition}

One can define topological categories analogously; we refer the reader to \cite{ilias} for the corresponding homotopy theory.

Since we want to look at each of our models homotopy-theoretically, we want to show that we have a good model structure for simplicial categories.  Let us first define the appropriate notion of weak equivalence, which can be thought of as a simplicial generalization of the definition of equivalence of categories.  Given a simplicial category, we denote by $\pi_0(\mathcal C)$ the category of components of $\mathcal C$, which has the same objects as $\mathcal C$ and in which
\[ \Hom_{\pi_0(\mathcal C)}(x,y) = \pi_0 \Map_\mathcal C(x,y). \]

\begin{definition}
A simplicial functor $f \colon \mathcal C \rightarrow \mathcal D$ is a \emph{Dwyer-Kan equivalence} if:
\begin{enumerate}
    \item for any $x,y \in \ob(\mathcal C)$, the induced map
    \[ \Map_\mathcal C(x,y) \rightarrow \Map_\mathcal D(fx,fy) \]
    is a weak equivalence of simplicial sets; and
    
    \item the induced functor $\pi_0(\mathcal C) \rightarrow \pi_0(\mathcal D)$ is essentially surjective.
\end{enumerate}
\end{definition}

\begin{theorem} \textup{\cite[1.1]{simpcat}} 
There is a model structure $\mathcal{SC}$ on the category
of small simplicial categories in which the weak equivalences are the Dwyer-Kan equivalences.
\end{theorem}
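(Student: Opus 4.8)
The plan is to invoke a recognition theorem for cofibrantly generated model categories, such as \cite[11.3.1]{hirsch}, which reduces the problem to producing sets $I$ and $J$ of maps satisfying a short list of conditions: the class $W$ of Dwyer--Kan equivalences must be closed under retracts and two-out-of-three; $I$ and $J$ must admit the small object argument; every relative $J$-cell complex must lie in $W$ and have the left lifting property against all $I$-injectives; every $I$-injective must lie in $W$; and every $J$-injective lying in $W$ must be an $I$-injective. The category of small simplicial categories has all small limits and colimits and is in fact locally presentable (being the category of small categories enriched in the locally presentable category $\SSets$), so the small object argument is available once $I$ and $J$ have small domains.

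For the generating sets, write $U_K$, for a simplicial set $K$, for the simplicial category with two objects $a,b$, with $\Map(a,b) = K$, trivial endomorphism spaces, and no morphisms from $b$ to $a$. I would take $I = \{\varnothing \to \ast\} \cup \{U_{\partial\Delta[n]} \to U_{\Delta[n]} : n \ge 0\}$, with $\ast$ the terminal simplicial category, and $J = \{U_{V[n,k]} \to U_{\Delta[n]} : n \ge 1,\ 0 \le k \le n\} \cup \{\ast \to \mathcal H\}$, where $\mathcal H$ is a cofibrant simplicial category with two objects equivalent to $\ast$, obtained from $\ast$ by freely adjoining a second object together with a homotopy equivalence to the original one; the inclusion $\ast \to \mathcal H$ is then an acyclic cofibration whose right lifting property encodes the ``lifting of homotopy equivalences out of the image'' half of the notion of fibration. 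A direct computation of lifting properties identifies the $I$-injectives as the simplicial functors $f$ that are surjective on objects and for which each $\Map_\mathcal{C}(x,y) \to \Map_\mathcal{D}(fx,fy)$ is an acyclic fibration of simplicial sets, and the $J$-injectives as the $f$ for which each such map is a Kan fibration and which have the equivalence-lifting property. Granting these descriptions, most of the hypotheses follow readily: an $I$-injective is visibly a Dwyer--Kan equivalence; a $J$-injective lying in $W$ is an $I$-injective, since a Kan fibration that is a weak equivalence is an acyclic fibration on each mapping space and surjectivity on objects follows by combining essential surjectivity with the equivalence-lifting property; closure of $W$ under retracts and two-out-of-three reduces to the corresponding statements for $\SSets$ and for equivalences of categories; and each map in $J$ is checked to be an acyclic cofibration.

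The real content, and the step I expect to be the main obstacle, is showing that every relative $J$-cell complex is a Dwyer--Kan equivalence. Closure under transfinite composition and coproducts is harmless, since filtered colimits of weak equivalences of simplicial sets along monomorphisms are again weak equivalences and the newly adjoined objects are always equivalent to pre-existing ones; so the problem reduces to a single pushout of a map in $J$. A pushout of $\ast \to \mathcal H$ along a choice of object of $\mathcal C$ essentially adjoins only a formal copy of that object, leaving the other mapping spaces unchanged, and is plainly a Dwyer--Kan equivalence. The difficulty is concentrated in a pushout of $U_{V[n,k]} \to U_{\Delta[n]}$ along a map classified by objects $a,b$ of $\mathcal C$ and a map $V[n,k] \to \Map_\mathcal{C}(a,b)$: forming this pushout in simplicial categories freely fills the chosen horn in $\Map_\mathcal{C}(a,b)$ \emph{and} propagates that filling through every composite with existing morphisms, so the induced maps $\Map_\mathcal{C}(c,d) \to \Map_{\mathcal C'}(c,d)$ change in a non-local way and each must be shown to be a weak equivalence. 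I would do this by exhibiting $\Map_{\mathcal C'}(c,d)$ as the colimit of a filtration whose successive maps are pushouts along coproducts of maps of the form $X \times V[n,k] \to X \times \Delta[n]$, where $X$ is a simplicial set assembled from the mapping spaces of $\mathcal C$ together with the simplices inserted so far; since the product of the acyclic cofibration $V[n,k] \to \Delta[n]$ with a simplicial set is again an acyclic cofibration, each map in the filtration is an acyclic cofibration, hence so is the transfinite composite computing $\Map_\mathcal{C}(c,d) \to \Map_{\mathcal C'}(c,d)$. Setting up this filtration precisely --- identifying the indexing ``words'', the auxiliary simplicial sets $X$, and checking that the relevant squares are genuine pushouts --- is the technical heart of the proof; once it is done, the recognition theorem applies and produces the model structure $\mathcal{SC}$.
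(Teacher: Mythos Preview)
The survey does not actually prove this theorem; it merely states it with a citation to \cite{simpcat}. So there is no ``paper's own proof'' to compare against here. Your sketch is, however, a faithful outline of the argument in the cited reference: the generating sets $I$ and $J$, the identification of $I$- and $J$-injectives, and the recognition that the genuine difficulty lies in showing that pushouts of maps in $J$ are Dwyer--Kan equivalences via a word-length filtration on mapping spaces --- all of this matches the structure of the original proof.

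Two small points of caution. First, in \cite{simpcat} the generating acyclic cofibrations include not a single map $\ast \to \mathcal H$ but a \emph{set} of such maps, with $\mathcal H$ ranging over isomorphism classes of suitable two-object cofibrant simplicial categories weakly equivalent to $\ast$; this is so that the right lifting property against $J$ recovers exactly the intended fibrations. A single well-chosen $\mathcal H$ can be made to work, but that requires a separate argument you have not supplied. Second, your assertion that a pushout of $\ast \to \mathcal H$ ``leav[es] the other mapping spaces unchanged'' is not literally correct: since $\mathcal H$ contains morphisms in both directions between its two objects, the pushout introduces new composites through the adjoined object, and the mapping spaces between old objects do change. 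Showing that these changes are weak equivalences requires the same kind of filtration analysis you describe for the horn case, so this pushout is not as trivial as you suggest.
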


As for simplicial sets, we focus on the fibrant objects, which are precisely the simplicial categories whose mapping spaces are all Kan complexes.

However, there are good reasons to look for alternative models for $(\infty,1)$-categories.
\begin{itemize}
\item This model category does not satisfy good properties if we want to continue the process of enrichment to obtain models for $(\infty,2)$-categories.  The category of small categories enriched in small simplicial categories can be defined, but we cannot expect it to have a suitable model structure, since $\mathcal{SC}$ is not a cartesian model category.  

\item This model is too rigid to accommodate many examples.  The composition law in an enriched category is required to satisfy strict associativity and unitality, and we would like models for which these properties only hold up to homotopy.
\end{itemize}

Our discussion of Kan complexes earlier lends itself to one possible way to think of certain simplicial sets as $(\infty,1)$-categories.  We can retain the conditions which encode category-like behavior but exclude the ones which impose the existence of inverses.

\begin{definition}
A simplicial set $K$ is a \emph{quasi-category} if a lift exists in any diagram of the form
\[ \xymatrix{V[n.k] \ar[d] \ar[r] & K \\
\Delta[n] \ar@{-->}[ur] } \]
where $n \geq 1$ and $0 < k < n$.
\end{definition}

\begin{theorem} \cite[2.13]{dugspir}, \cite{joyal}, \cite[2.2.5.1]{lurie}
There is a cartesian model structure $\qcat$ on the category of simplicial sets in which the fibrant objects are the quasi-categories.
\end{theorem}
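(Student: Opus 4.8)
The plan is to obtain the model structure $\qcat$ from a general recognition theorem rather than by checking the model category axioms by hand; one can follow Lurie's argument for \cite[2.2.5.1]{lurie}, which uses a recognition theorem of Smith type for locally presentable categories, or use Cisinski's machine for model structures on presheaf categories (see \cite{barwick}), and I would present the latter. The data to feed in are: the cofibrations, which we take to be \emph{all} monomorphisms, so that every simplicial set is cofibrant exactly as for $\SSets$ in Theorem~\ref{ssetstop}; a cylinder functor, for which we take $(-)\times J$, where $J = \nerve(G)$ is the nerve of the groupoid $G$ with two objects and a unique isomorphism between them; and a generating set of anodyne maps, namely the inner horn inclusions $V[n,k]\to\Delta[n]$ with $0<k<n$ together with the endpoint inclusion $\Delta[0]\to J$. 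This last generator is the feature that makes ``isomorphisms behave like equivalences'', and is the reason one uses $J$ rather than $\Delta[1]$ as the interval. Verifying that these data meet the hypotheses of the recognition theorem is a formal, essentially combinatorial matter, and produces a cofibrantly generated model structure on $\SSets$ whose weak equivalences are automatically the maps $f\colon A\to B$ for which $\uMap(B,X)\to\uMap(A,X)$ is a homotopy equivalence with respect to the cylinder $(-)\times J$ for every fibrant object $X$.

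Next I would identify the fibrant objects. By construction these are exactly the simplicial sets having the right lifting property against every anodyne map; in particular they lift against the inner horn inclusions, so every fibrant object is a quasi-category. The content is the converse: an arbitrary quasi-category $X$ must be shown to have the right lifting property against $\Delta[0]\to J$ and against all of its pushout-products with the boundary inclusions $\partial\Delta[n]\to\Delta[n]$. Unwinding, this is Joyal's theorem that an edge of a quasi-category $X$ which becomes invertible in the homotopy category $\Ho X$ already extends to a map $J\to X$, together with its higher-dimensional coherence refinements. I expect this to be the main obstacle: the proof is an intricate induction, solving a tower of inner-horn lifting problems to upgrade a one-sided homotopy inverse to a fully coherent datum, and it is the technical heart of the theorem.

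Finally, for the cartesian property one uses that $\SSets$ is cartesian closed with unit the terminal (hence cofibrant) object $\Delta[0]$, so that only the pushout-product axiom remains. That the pushout-product of two monomorphisms --- the corner map $(A\times D)\cup_{A\times C}(B\times C)\to B\times D$ associated to cofibrations $A\to B$ and $C\to D$ --- is again a monomorphism is routine. The substantive half is that this corner map is an acyclic cofibration whenever one of $A\to B$, $C\to D$ is; by a saturation argument this reduces to the cases in which the acyclic cofibration is one of the chosen anodyne generators, and these come down to Joyal's combinatorial lemma that the pushout-product of an inner anodyne map with a monomorphism is inner anodyne, together with the corresponding statement for $\Delta[0]\to J$ (which is built into the construction of the anodyne class). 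Transposing across the cartesian-closed adjunction, these assertions say precisely that for every fibration $p\colon X\to Y$ and cofibration $A\to B$ the induced map $\uMap(B,X)\to\uMap(A,X)\times_{\uMap(A,Y)}\uMap(B,Y)$ is a fibration, and is acyclic as soon as $p$ or $A\to B$ is; this is the compatibility of the monoidal and model structures, and finishes the proof.
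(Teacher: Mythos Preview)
The paper does not give a proof of this theorem: it is a survey, and the statement is simply cited from \cite{dugspir}, \cite{joyal}, and \cite{lurie} with no argument supplied. So there is no ``paper's own proof'' to compare against; your proposal stands on its own as a sketch of how one would establish the result.

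On its merits, your outline is sound and follows one of the standard routes. Building the model structure via Cisinski's machinery with cofibrations the monomorphisms, interval $J=\nerve(0\cong 1)$, and anodyne generators the inner horns together with $\Delta[0]\to J$ does yield the Joyal model structure, and you have correctly located the one genuinely hard step: showing that every quasi-category has the right lifting property against the full anodyne class, which unwinds to Joyal's special-horn theorem (equivalently, that homotopy-invertible edges extend to maps out of $J$, coherently). Your treatment of the cartesian axiom is also accurate; the substantive ingredient is indeed the lemma that the pushout-product of an inner anodyne map with a monomorphism is inner anodyne, and the rest is saturation and adjunction. Two small remarks: the reference \cite{barwick} in the paper is about Bousfield localization and Smith's theorem rather than Cisinski's presheaf machinery specifically, so if you want to cite Cisinski's construction you would need a different source; and your phrase ``formal, essentially combinatorial matter'' for verifying Cisinski's hypotheses hides a nontrivial check that the chosen anodyne class is stable under pushout-product with the cylinder's boundary inclusion, which is exactly where the compatibility of $J$ with inner horns enters. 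Neither of these is a gap in the mathematics, only in the exposition.
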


To show that quasi-categories provide a good model for $(\infty,1)$-categories, it suffices to have a model category is Quillen equivalence between $\Qcat$ and $\mathcal{SC}$.  We first need to define an adjoint pair of functors between the underlying categories.  The following definition was first given by Cordier and Porter \cite{cp}.

\begin{definition}
	The \emph{coherent nerve functor} $\widetilde{N} \colon \SSets \rightarrow \mathcal{SC}$ is defined by 
	\[ \widetilde{N}(\mathcal C)_n = \Hom_{\mathcal{SC}}(F_*[n], \mathcal C), \] 
	where $F_*[n]$ denotes a simplicial resolution of the category $[n]$.
\end{definition}

Different approaches to the following result can be found in \cite{joyal} and \cite{lurie}.

\begin{prop} 
	The coherent nerve functor $\widetilde{N}$ admits a left adjoint $\mathfrak C$.
\end{prop}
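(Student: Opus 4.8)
The plan is to construct the left adjoint $\mathfrak{C}$ by a standard colimit-of-representables argument, then verify the adjunction by a Yoneda-style computation. First I would observe that $\widetilde{N}$ is defined so that $\widetilde{N}(\mathcal{C})_n = \Hom_{\mathcal{SC}}(F_*[n], \mathcal{C})$; in other words, $\widetilde{N}$ is the nerve associated to the cosimplicial object $F_* \colon \Delta \to \mathcal{SC}$ sending $[n]$ to its simplicial resolution $F_*[n]$. There is a general principle (the ``nerve-realization'' paradigm, going back to Kan) that any cosimplicial object in a cocomplete category gives rise to an adjoint pair: the nerve $\widetilde{N}$ on the right, and on the left the unique colimit-preserving extension of $F_*$ along the Yoneda embedding $\Delta \to \SSets$. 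So I would \emph{define} $\mathfrak{C}$ by
\[ \mathfrak{C}(K) = \operatorname{colim}_{\Delta[n] \to K} F_*[n], \]
the colimit taken over the category of simplices of $K$ (equivalently, the left Kan extension of $F_*$ along the Yoneda embedding). Since $\mathcal{SC}$ is cocomplete — it has all small colimits, as one checks directly for enriched categories, or as follows from its being a model category — this colimit exists.

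Next I would verify the adjunction $\Hom_{\mathcal{SC}}(\mathfrak{C}(K), \mathcal{C}) \cong \Hom_{\SSets}(K, \widetilde{N}(\mathcal{C}))$. The key step is the case $K = \Delta[n]$: here $\mathfrak{C}(\Delta[n]) = F_*[n]$ because the category of simplices of $\Delta[n]$ has a terminal object, namely the identity $\Delta[n] \to \Delta[n]$, so the colimit collapses to $F_*[n]$. Then
\[ \Hom_{\mathcal{SC}}(\mathfrak{C}(\Delta[n]), \mathcal{C}) = \Hom_{\mathcal{SC}}(F_*[n], \mathcal{C}) = \widetilde{N}(\mathcal{C})_n = \Hom_{\SSets}(\Delta[n], \widetilde{N}(\mathcal{C})), \]
where the last equality is the Yoneda lemma. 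For a general simplicial set $K$, I would write $K \cong \operatorname{colim}_{\Delta[n] \to K} \Delta[n]$ as a colimit of representables, use that $\mathfrak{C}$ preserves colimits by construction, and then combine the representable case with the fact that both $\Hom_{\mathcal{SC}}(-, \mathcal{C})$ and $\Hom_{\SSets}(-, \widetilde{N}(\mathcal{C}))$ send colimits in the first variable to limits. Chasing through these isomorphisms — and checking they are natural in $K$ — gives the desired adjunction.

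The main obstacle, such as it is, is bookkeeping rather than anything deep: one must confirm that $F_*$ is genuinely a \emph{cosimplicial} object, i.e. that the simplicial resolutions $F_*[n]$ assemble functorially in $[n]$, so that the left Kan extension is well-defined, and one must be careful that the colimit defining $\mathfrak{C}(K)$ is the colimit of simplicial categories (which involves both coproducts of hom-objects and quotients enforcing the composition relations) rather than some naive pointwise colimit. Alternatively, and perhaps more cleanly, I could invoke the adjoint functor theorem directly: $\widetilde{N}$ is a functor between locally presentable categories ($\SSets$ and small simplicial categories are both locally presentable), and it preserves all limits — this follows because limits in $\mathcal{SC}$ are computed on objects and mapping spaces, and $\widetilde{N}(\lim \mathcal{C}_i)_n = \Hom_{\mathcal{SC}}(F_*[n], \lim \mathcal{C}_i) = \lim \Hom_{\mathcal{SC}}(F_*[n], \mathcal{C}_i) = \lim \widetilde{N}(\mathcal{C}_i)_n$ — so the adjoint functor theorem for presentable categories produces the left adjoint $\mathfrak{C}$ formally. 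Either route works; I would present the explicit construction since it identifies $\mathfrak{C}$ concretely, which is useful for the subsequent comparison of model structures.
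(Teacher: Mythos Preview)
Your proposal is correct: the nerve--realization paradigm (left Kan extension of the cosimplicial object $F_*$ along the Yoneda embedding) is the standard construction of $\mathfrak{C}$, and your verification of the adjunction via the category of simplices is the usual one. Note, however, that the paper does not actually supply a proof of this proposition; it only cites \cite{joyal} and \cite{lurie} for different approaches. Your explicit construction is essentially the one Lurie gives in \cite[\S 1.1.5]{lurie}, so in that sense you have recovered the argument the paper defers to. The alternative route you sketch via the adjoint functor theorem for locally presentable categories is also valid, though as you note it is less informative about what $\mathfrak{C}$ actually is.
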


This adjoint pair gives us our desired means of comparison.

\begin{theorem} \cite[8.2]{dugspir}, \cite{joyal}, \cite[2.2.5.1]{lurie}
The adjoint pair 
\[ \mathfrak C \colon \qcat\leftrightarrows \mathcal{SC} \colon \widetilde{N} \]
is a Quillen equivalence.
\end{theorem}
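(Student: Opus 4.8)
The strategy is to verify the two parts of the definition of a Quillen equivalence: that $(\mathfrak C,\widetilde N)$ is a Quillen pair, and that for any simplicial set $X$ (every object of $\qcat$ is cofibrant) and any fibrant simplicial category $\mathcal C$, a simplicial functor $\mathfrak C X\to\mathcal C$ is a Dwyer--Kan equivalence exactly when its adjoint $X\to\widetilde N\mathcal C$ is a weak equivalence in $\qcat$; equivalently, that $(\mathfrak C,\widetilde N)$ is a Quillen pair and the derived unit and derived counit are weak equivalences. For the Quillen pair, note first that the cofibrations of $\qcat$ are the monomorphisms, generated under pushout and transfinite composition by the boundary inclusions $\partial\Delta[n]\hookrightarrow\Delta[n]$, so the basic input is the value of $\mathfrak C$ on representables. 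As $\mathfrak C$ is the left Kan extension along the Yoneda embedding of the functor $[n]\mapsto F_*[n]$, one has $\mathfrak C\Delta[n]=F_*[n]$: the simplicial category with objects $0,1,\dots,n$, with $\Map(i,j)$ the nerve of the poset of subsets of $\{i,i+1,\dots,j\}$ containing $i$ and $j$ (for $i<j$, a cube of dimension $j-i-1$) and composition given by union. One checks that $\mathfrak C(\partial\Delta[n])\to\mathfrak C\Delta[n]$ is a cofibration of $\mathcal{SC}$ --- it is the identity on objects and a pushout of generating cofibrations --- and since $\mathfrak C$ is a left adjoint it then preserves all monomorphisms, equivalently $\widetilde N$ preserves trivial fibrations. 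The remaining part, that $\widetilde N$ preserves fibrations, is more delicate since fibrations of $\qcat$ are not merely inner fibrations: one reduces to Dwyer--Kan fibrations between fibrant simplicial categories and shows, via the description of fibrations between quasi-categories as isofibrations, that their coherent nerves are isofibrations, testing this on mapping spaces and maximal Kan subcomplexes. (That $\widetilde N$ sends a simplicial category with Kan mapping spaces to a quasi-category is the special case of lifting inner horns against the explicit $F_*[n]$.)

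Next, following Dugger--Spivak, I would establish the technical heart of the argument: a computable homotopical model for the mapping space $\mathfrak C(X)(a,b)$ for an arbitrary simplicial set $X$ and vertices $a,b$. Presenting $\mathfrak C X$ as the colimit of the $\mathfrak C\Delta[n]$ over the category of simplices of $X$, one shows that $\mathfrak C(X)(a,b)$ is computed by a diagram indexed on the category of \emph{necklaces} $\Delta^{n_1}\vee\cdots\vee\Delta^{n_k}\to X$ from $a$ to $b$. The conclusion drawn from this is: when $X$ is a quasi-category, the natural comparison between $\mathfrak C(X)(a,b)$ and the standard hom-space $\Hom^X(a,b):=\{a\}\times_X X^{\Delta[1]}\times_X\{b\}$ is a weak equivalence of simplicial sets; in particular $\pi_0\mathfrak C(X)$ is naturally isomorphic to $\Ho(X)$. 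In other words, rigidification leaves the homotopy types of the mapping spaces of a quasi-category unchanged.

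The Quillen equivalence then follows by checking the derived counit and unit. For the counit, if $\mathcal C$ is a fibrant simplicial category then $\widetilde N\mathcal C$ is a quasi-category, the counit $\mathfrak C\widetilde N\mathcal C\to\mathcal C$ is the identity on objects, and on mapping spaces it is the comparison above for $X=\widetilde N\mathcal C$, whose target $\Hom^{\widetilde N\mathcal C}(x,y)$ is $\Map_{\mathcal C}(x,y)$ up to homotopy; hence the counit is a Dwyer--Kan equivalence. For the unit, let $X$ be a quasi-category and $R$ a fibrant-replacement functor on $\mathcal{SC}$, which we may take to be the identity on objects. Since both $X$ and $\widetilde N R\mathfrak C X$ are fibrant in $\qcat$, the unit $X\to\widetilde N R\mathfrak C X$ is a weak equivalence iff it is essentially surjective and a weak equivalence on all mapping spaces; it is a bijection on vertex sets, so essential surjectivity is immediate, and the mapping-space condition follows by combining the equivalences $\Hom^X(a,b)\simeq\mathfrak C(X)(a,b)$, $\mathfrak C(X)(a,b)\simeq\Map_{R\mathfrak C X}(a,b)$ (from fibrant replacement) and $\Map_{R\mathfrak C X}(a,b)\simeq\Hom^{\widetilde N R\mathfrak C X}(a,b)$ (the comparison for the quasi-category $\widetilde N R\mathfrak C X$), and checking these fit into a diagram with the unit.

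The main obstacle is the computation of mapping spaces in the second step. The functor $\mathfrak C$ is defined by a left Kan extension and is essentially inscrutable on non-representable simplicial sets: it does not preserve the pullbacks defining hom-spaces, and the obvious colimit presentation of $\mathfrak C X$ scrambles the object sets of the constituent pieces. Genuine combinatorial input --- the necklace category of Dugger--Spivak, or alternatively Lurie's inductive construction of $\mathfrak C\Delta[n]$ by pushouts along inner horns --- is needed to gain control of $\mathfrak C(X)(a,b)$ and to prove the invariance of the homotopy types of mapping spaces. Granted this and the explicit description of $\mathfrak C\Delta[n]$, everything else is formal manipulation inside the two model structures of the statement.
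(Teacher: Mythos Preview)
The paper is a survey and does not prove this theorem; it simply cites it from \cite[8.2]{dugspir}, \cite{joyal}, and \cite[2.2.5.1]{lurie} and moves on. There is therefore no proof in the paper to compare your proposal against.

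That said, your outline is a faithful sketch of the Dugger--Spivak argument (the first of the cited references): establish the Quillen pair via the explicit description of $\mathfrak C\Delta[n]=F_*[n]$, control $\mathfrak C(X)(a,b)$ for general $X$ via the necklace category, deduce that rigidification preserves mapping-space homotopy types on quasi-categories, and conclude by checking derived unit and counit. The main point you flag as the obstacle --- the combinatorial computation of mapping spaces in $\mathfrak C X$ --- is indeed where all the work lies, and your identification of necklaces as the key tool is correct. Lurie's proof in \cite{lurie} is organized differently (an induction on the structure of $\mathfrak C\Delta[n]$ via inner-horn pushouts, together with the straightening/unstraightening machinery), so the three cited sources do not all take the same route; your proposal tracks the Dugger--Spivak one.
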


However, there are other approaches to defining models for $(\infty,1)$-categories with weak composition whose starting point is instead the simplicial nerve functor, which takes a simplicial category to a simplicial space, or bisimplicial set, $\Deltaop \rightarrow \SSets$.  To define this functor, it is convenient to observe that a simplicial category, in our sense, can be thought of as a simplicial object $\Deltaop \rightarrow \Cat$ for which the face and degeneracy maps are all the identity on objects.

\begin{definition} 
	Let $\mathcal C$ be a simplicial category, thought of as a functor $\Deltaop \rightarrow \Cat$.  Its \emph{simplicial nerve} is the simplicial space $\snerve (\mathcal C)$ defined by 
	\[ \snerve(\mathcal C)_{*,m} = \nerve(\mathcal C_m). \]
\end{definition}

We want to look at simplicial spaces that arise as simplicial nerves of simplicial categories and identify what properties they must have.  The first thing to observe is that, since simplicial categories do not have a simplicial structure on their objects, the simplicial set $\snerve(\mathcal C)_0$ must be discrete.  We thus make the following definition.

\begin{definition}
A \emph{Segal precategory} is a simplicial space $X$ such that $X_0$ is discrete.
\end{definition}

We denote the category of Segal precategories by $\SSets^{\Deltaop}_{\disc}$.

More interestingly, however, is the structure that we get from the composition of mapping spaces in a simplcial category.  To describe it, we need to set up some notation.

In the category $\Delta$, consider the maps $\alpha^i \colon [1] \rightarrow [k]$, where $0\leq i <k$, given by $\alpha^i(0)=i$ and $\alpha^i(1)=i+1$.   Define the simplicial set 
\[ G(k)= \bigcup_{i=0}^{k-1} \alpha^i \Delta[1] \subseteq \Delta[k]. \]
Alternatively, we can write 
\[ G(k) = \underbrace{\Delta[1] \amalg_{\Delta[0]} \cdots \amalg_{\Delta[0]} \Delta[1]}_k \]
where the right-hand side is colimit of representables induced by the diagram
\[ [1] \overset{d^0}{\rightarrow} [0] \overset{d^1}{\leftarrow} \cdots \overset{d^0}{\rightarrow} [0] \overset{d^1}{\leftarrow} [1] \]
in the category $\Delta$.

Since we are working with simplicial spaces, rather than simplicial sets, we want to think of $G(k)$ and $\Delta[k]$ in that context.  There are two ways to think of a simplicial set $K$ as a simplicial space: as a constant simplicial diagram given by $K$, or as a diagram of discrete simplicial sets given by the simplices of $K$.  Since the former is typically still denoted by $K$, we denote the latter by $K^t$; we think of it as the ``transpose" of the constant diagram, which constant in the other simplicial direction.  Thus, we have $K^t_k = K_k$, where the right-hand side is a constant simplicial set on the set $K_k$.
 
With this notation in place, let us consider the inclusion of simplicial spaces $G(k)^t \rightarrow \Delta[k]^t$.  

\begin{definition}
Given any simplicial space $W$ and any $k \geq 2$, the \emph{Segal map} is the induced map
\[ \Map(\Delta[k]^t, W) \rightarrow \Map(G(k)^t, W) \]
which can be rewritten simply as
\[ W_k \rightarrow \underbrace{W_1 \times_{W_0} \cdots \times_{W_0} W_1}_k. \]
\end{definition}

Now it is not hard to check the following characterization of simplicial nerves.

\begin{prop}
Let $X$ be a simplicial space which can be obtained as the nerve of a simplicial category.  Then $X$ is a Segal precategory and, for every $k \geq 2$, the Segal maps
\[ X_k \rightarrow \underbrace{X_1 \times_{X_0} \cdots \times_{X_0} X_1}_k \]
are isomorphisms of simplicial sets.
\end{prop}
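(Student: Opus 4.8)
The plan is to unwind the definitions and reduce everything to the classical strict Segal condition for nerves of ordinary categories, applied levelwise in the simplicial ``space'' direction.

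First I would spell out the bisimplicial set. Writing $\mathcal C$ as a functor $\Deltaop \to \Cat$ that is constant on objects, the simplicial nerve has $X_{k,m} = \nerve(\mathcal C_m)_k$, so that $X_m = \nerve(\mathcal C_m)$ as a simplicial set for each $m$. To see that $X$ is a Segal precategory, observe that $X_{0,m} = \nerve(\mathcal C_m)_0 = \ob(\mathcal C_m)$, and since the face and degeneracy maps of $\mathcal C$ are the identity on objects, $\ob(\mathcal C_m) = \ob(\mathcal C)$ for every $m$ with all structure maps in the $m$-direction the identity. Hence $X_0$ is the constant, and therefore discrete, simplicial set on the set $\ob(\mathcal C)$.

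Next, for the Segal maps, I would invoke the basic fact that the nerve of any small category $\mathcal D$ satisfies the Segal condition on the nose: the map
\[ \nerve(\mathcal D)_k \longrightarrow \nerve(\mathcal D)_1 \times_{\nerve(\mathcal D)_0} \cdots \times_{\nerve(\mathcal D)_0} \nerve(\mathcal D)_1 \]
induced by restriction along $\alpha^0, \dots, \alpha^{k-1} \colon [1] \to [k]$ is a bijection of sets, because a $k$-simplex of $\nerve(\mathcal D)$ is by definition a string of $k$ composable morphisms, which is precisely the data of $k$ morphisms agreeing appropriately on sources and targets. Equivalently, $[k]$ is the iterated pushout of copies of $[1]$ along $[0]$ in $\Cat$, so $\Hom_{\Cat}([k], \mathcal D)$ is the corresponding iterated pullback. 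Applying this with $\mathcal D = \mathcal C_m$ gives, for every $m$, a bijection $X_{k,m} \cong X_{1,m} \times_{X_{0,m}} \cdots \times_{X_{0,m}} X_{1,m}$, and since limits of simplicial sets, in particular fiber products, are computed levelwise, the right-hand side is exactly the set of $m$-simplices of $X_1 \times_{X_0} \cdots \times_{X_0} X_1$. These bijections are natural in $[m] \in \Deltaop$ because they are induced by the fixed maps $\alpha^i$, independent of $m$, so they assemble into an isomorphism of simplicial sets which one checks is the Segal map.

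There is no real obstacle here beyond careful bookkeeping: the only thing one must be attentive to is keeping the two simplicial directions separate, and confirming that the strict Segal decomposition in the $k$-direction is compatible with the structure maps in the $m$-direction, which is automatic since those maps are induced by functors $\mathcal C_m \to \mathcal C_{m'}$ and the nerve is functorial. The one input that is genuinely used rather than checked is the classical strict Segal property of nerves of $1$-categories, which one can either cite or derive from the pushout description of $[k]$ in $\Cat$.
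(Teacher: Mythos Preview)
Your argument is correct. The paper does not actually supply a proof of this proposition; it simply remarks beforehand that ``it is not hard to check'' and then states the result. Your levelwise reduction to the strict Segal condition for nerves of ordinary categories, together with the observation that constancy on objects forces $X_0$ to be discrete, is precisely the straightforward verification the paper is gesturing at.
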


Since we want a model for $(\infty,1)$-categories which is less rigid than that of simplicial categories, we can relax the condition that the Segal maps be isomorphisms.  We thus make the following definition.

\begin{definition}
	A \emph{Segal space} is a Reedy fibrant simplicial space $W$ such that the Segal maps are weak equivalences of simplicial sets for all $k \geq 2$.
\end{definition}

This requirement that the Segal maps be weak equivalences is often referred to as the \emph{Segal condition}.

\begin{theorem} \cite[7.1]{rezk}
	There is a model structure, which we denote by $\sesp$, on the category of simplical spaces such that all objects are cofibrant and the fibrant objects are precisely the Segal spaces.
\end{theorem}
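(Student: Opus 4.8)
The plan is to construct $\sesp$ as a left Bousfield localization of the Reedy model structure on simplicial spaces. First, recall that $\SSets^{\Deltaop}$ carries the Reedy model structure discussed above; since $\SSets$ is a left proper, combinatorial, simplicial model category in which every object is cofibrant, the Reedy structure inherits these properties, so it satisfies the hypotheses of the localization theorem $\cite[4.1.1]{hirsch}$. Observe also that the Reedy cofibrations are precisely the monomorphisms of simplicial spaces: for a map out of the initial object the relevant latching maps are trivially monomorphisms, so every simplicial space is Reedy cofibrant. Since a left Bousfield localization keeps the same cofibrations, this immediately gives the first assertion of the theorem, that all objects of $\sesp$ are cofibrant.

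Second, localize the Reedy model structure at the set $S = \{\, G(k)^t \hookrightarrow \Delta[k]^t : k \geq 2 \,\}$. By the localization theorem this produces a model structure on the same underlying category whose cofibrations are the Reedy cofibrations, whose weak equivalences are the $S$-local equivalences, and whose fibrant objects are the $S$-local objects; we name this model structure $\sesp$. Again, all objects are cofibrant.

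Third, identify the fibrant objects. By definition an $S$-local object $W$ is Reedy fibrant and, for every $k \geq 2$, the map $\Map^h(\Delta[k]^t, W) \rightarrow \Map^h(G(k)^t, W)$ on homotopy function complexes is a weak equivalence of simplicial sets. Since $\Delta[k]^t$ and $G(k)^t$ are Reedy cofibrant and $W$ is Reedy fibrant, these homotopy function complexes are computed by the simplicial mapping spaces of the Reedy structure, so the displayed map is exactly the Segal map $W_k \rightarrow W_1 \times_{W_0} \cdots \times_{W_0} W_1$ from the definition above. Hence $W$ is $S$-local if and only if it is Reedy fibrant and all its Segal maps are weak equivalences, i.e.\ if and only if $W$ is a Segal space.

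The main obstacle is the bookkeeping in the third step. One must check that $\Map^h(G(k)^t, W)$ really is modeled by the strict iterated pullback $W_1 \times_{W_0} \cdots \times_{W_0} W_1$: this uses that $G(k)^t = \Delta[1]^t \amalg_{\Delta[0]^t} \cdots \amalg_{\Delta[0]^t} \Delta[1]^t$ is a homotopy colimit, being an iterated pushout of cofibrations between cofibrant objects, and that the strict pullback is a homotopy pullback, which follows from the Reedy fibrancy of $W$ making $W_1 \rightarrow W_0$ a fibration. Verifying that the ambient Reedy model structure meets the precise technical hypotheses of $\cite[4.1.1]{hirsch}$ is routine but should be acknowledged.
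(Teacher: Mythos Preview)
Your proposal is correct and follows exactly the approach of the cited reference \cite[7.1]{rezk}: one obtains $\sesp$ as the left Bousfield localization of the Reedy model structure on $\SSets^{\Deltaop}$ with respect to the set of maps $G(k)^t \hookrightarrow \Delta[k]^t$, and the identification of the $S$-local objects with the Segal spaces proceeds just as you describe. The paper itself gives no proof beyond the citation, so there is nothing further to compare.
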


Segal spaces, with no further assumptions, do not quite model $(\infty,1)$ categories.  While the Segal condition allows us to define an up-to-homotopy composition, we have a space, rather than a set, of objects.  In other words, Segal spaces model categories internal to spaces, rather than enriched in spaces.  There are two approaches to remedying this difficulty.

For our first model, taking the output of the simplicial nerve as a guide, we retain the discreteness of the space in degree 0.  The following definition first appeared in \cite{dks}.

\begin{definition}
A \emph{Segal category} is a Segal precategory for which the Segal maps are weak equivalences for all $k \geq 2$.
\end{definition}

To show that Segal categories do indeed give a model for $(\infty,1)$-categories, we need to define a model structure for them and show that it is Quillen equivalent to $\mathcal{SC}$.  We first need a sensible notion of weak equivalence, and again we use simplicial categories as a guide.

We can apply much of the language of simplicial categories in the context of Segal categories.  Given a Segal category $X$, we refer to the discrete space $X_0$ as its \emph{set of objects}.  We define \emph{mapping spaces} between objects $x$ and $y$ as the homotopy pullback
\[ \xymatrix{\map_X(x,y) \ar[d] \ar[r] & X_1 \ar[d] \\
\{(x,y)\} \ar[r] & X_0 \times X_0.} \] 
Using the fact that the Segal maps are weak equivalences, there is a notion of composition of mapping spaces, but it is only defined up to homotopy \cite[5.3]{rezk}. Thus, we get a desired ``weak composition" compared to the stricter composition in a simplicial category.  Taking the objects and the sets of path components of the mapping spaces, we obtain an ordinary category $\Ho(X)$ associated to a Segal category $X$.  

Although we do not go into detail here, there is a suitable functor $L$ which takes any Segal precategory to a Segal category which is weakly equivalent to it in the model category $\sesp$ \cite[\S 5]{thesis}.  Thus, for more general Segal precategories, we can first apply the functor $L$ and then apply the definitions just described.  In particular, we make the following definition.

\begin{definition}
	A map $f \colon X \rightarrow Y$ of Segal precategories is a \emph{Dwyer-Kan equivalence} if:
	\begin{enumerate}
		\item for any objects $x,y \in X_0$, the induced map \[\map_{LX}(x,y) \rightarrow \map_{LY}(fx,fy) \]
		is a weak equivalence of simplicial sets, and
		
		\item the induced map on homotopy categories $\Ho(LX) \rightarrow \Ho(LY)$ is essentially surjective.
	\end{enumerate}
\end{definition}

\begin{theorem} \cite[5.1, 7.1, 7.5]{thesis}, \cite{pel}
There are two model structures on the category of Segal precategories, each of which has Dwyer-Kan equivalences as weak equivalences.
\begin{enumerate}
	\item The first model structure, which we denote by $\Secat_c$, has all objects cofibrant and fibrant objects precisely the Reedy fibrant Segal categories, and this model structure is cartesian.
	
	\item The second model structure, which we denote by $\Secat_f$, has fibrant objects precisely the projective fibrant Segal categories.  The cofibrant objects are closely related to cofibrant objects in the projective model structure on simplicial spaces.
	
	\item The two model structures are Quillen equivalent via the identity functors:
	\[ \id \colon \Secat_f \rightleftarrows \Secat_c \colon \id. \]
\end{enumerate}
\end{theorem}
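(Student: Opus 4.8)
The plan is to build the two model structures by direct construction and then to observe that their comparison in part~(3) is almost formal. First I would check that the category $\SSets^{\Deltaop}_{\disc}$ of Segal precategories is complete and cocomplete, which is needed even to state that it is a model category. It is closed under limits inside $\SSets^{\Deltaop}$, since a limit of discrete simplicial sets is discrete, and it is closed there under filtered colimits for the same reason; being a full, limit-closed, accessibly embedded subcategory of the locally presentable category $\SSets^{\Deltaop}$ of bisimplicial sets, it is reflective, the reflection being built from the canonical map $W_0 \to \pi_0(W_0)$. Hence limits are computed as for simplicial spaces and colimits are reflections of the corresponding colimits of simplicial spaces.

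The construction of $\Secat_c$ is the substantial part. One cannot simply restrict the Reedy model structure on $\SSets^{\Deltaop}$ and localize at the Segal maps $G(k)^t \to \Delta[k]^t$, because the functorial factorizations for simplicial spaces do not preserve discreteness in degree $0$, so there is no ``Reedy'' model structure on $\SSets^{\Deltaop}_{\disc}$ to feed into the localization machinery. Instead I would build the structure by hand: declare the weak equivalences to be the Dwyer-Kan equivalences, the cofibrations the monomorphisms, and the fibrations the maps with the right lifting property against the acyclic cofibrations. Verifying the axioms rests on three ingredients: the Dwyer-Kan equivalences satisfy two-out-of-three and are closed under retracts; the monomorphisms form the saturation of a set of maps, so the small object argument produces the factorizations \emph{inside} $\SSets^{\Deltaop}_{\disc}$; and the functor $L$ of \cite[\S 5]{thesis}, which replaces any Segal precategory by a weakly equivalent Segal category, is used both to identify the fibrant objects and to relate lifting properties to Dwyer-Kan equivalences. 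One then checks that the fibrant objects are precisely the Reedy fibrant Segal categories. For the cartesian assertion one uses that $\SSets^{\Deltaop}_{\disc}$ is cartesian closed (it is reflective in the cartesian closed category of bisimplicial sets, and the reflection is compatible with finite products because $\pi_0$ is), that every object is cofibrant, and that a finite product of Reedy fibrant Segal categories is again one (finite limits commute with products, so the Segal maps are preserved); the pushout-product axiom then reduces to a check on generating cofibrations.

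For $\Secat_f$ one carries out a parallel construction, keeping the Dwyer-Kan equivalences as the weak equivalences but replacing the monomorphisms by the smaller class of ``projective-type'' cofibrations, following Pellissier \cite{pel}; these are still monomorphisms, the fibrant objects turn out to be the projective fibrant Segal categories, and the cofibrant objects are described as for the projective model structure on simplicial spaces. Part~(3) is then essentially formal. Since $\Secat_c$ and $\Secat_f$ have the same weak equivalences and every cofibration of $\Secat_f$ is a monomorphism, hence a cofibration of $\Secat_c$, the identity functor $\id \colon \Secat_f \to \Secat_c$ preserves cofibrations and, the weak equivalences agreeing, acyclic cofibrations, so $(\id,\id)$ is a Quillen pair. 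Because the adjunction is given by identity functors and the two classes of weak equivalences coincide, a map $FX \to Y$ is a weak equivalence exactly when the identical map $X \to GY$ is, so the Quillen pair is automatically a Quillen equivalence.

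The step I expect to be the main obstacle is the construction of $\Secat_c$, and within it the simultaneous requirements that \emph{all} monomorphisms be cofibrations and that the weak equivalences be \emph{exactly} the Dwyer-Kan equivalences, all while the factorizations stay inside the category of Segal precategories rather than leaking into general simplicial spaces. It is precisely here that the auxiliary functor $L$ and careful control of its homotopical behaviour are indispensable.
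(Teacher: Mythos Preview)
The paper is a survey and does not prove this theorem at all; it simply cites the references \cite[5.1, 7.1, 7.5]{thesis} and \cite{pel} and moves on. So there is no ``paper's own proof'' to compare against. That said, your sketch is a fair outline of the strategy actually used in those references: the model structure $\Secat_c$ is indeed built by hand rather than by localizing an existing model structure, precisely because the Reedy factorizations on $\SSets^{\Deltaop}$ do not preserve discreteness in degree~$0$; the functor $L$ plays exactly the role you describe; and the argument for part~(3) via identity functors with coinciding weak equivalences and nested cofibration classes is correct and essentially how it is done.

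One correction on attribution: you credit $\Secat_f$ to Pelissier, but it is the other way around. As the paper notes in the Remark immediately following the theorem, Pelissier \cite{pel} constructed $\Secat_c$ (the injective-flavoured structure in which every object is cofibrant), while $\Secat_f$ was introduced by Bergner in \cite{thesis} specifically because $\Secat_c$ has too many cofibrations to admit a direct Quillen equivalence with $\mathcal{SC}$. This does not affect the mathematics of your sketch, but you should swap the attributions. Also, your argument for cartesianness of $\Secat_c$ is a bit breezy: reflectivity alone does not guarantee cartesian closedness of the subcategory, and the pushout-product axiom in a localized setting requires checking that products with cofibrant objects preserve the \emph{local} weak equivalences (here, Dwyer-Kan equivalences), not merely that products of fibrant objects are fibrant. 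In the references this is handled with some care, so you should expect that step to need more than the one line you give it.
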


\begin{remark}
   The astute reader might have noticed the following incongruity in our definitions.  We assume that a Segal space is Reedy fibrant, but we make no such assumption on a Segal category.  In particular, what we'd like to say is that a Segal category is simply a Segal space with 0-space discrete.  That point of view works nicely if all we wanted was the model structure $\Secat_c$.  Indeed, this model structure is preferable for many purposes and was the one originally developed by Pelissier in \cite{pel}.  
   
   Unfortunately, there is no direct Quillen equivalence between $\mathcal{SC}$ and $\Secat_c$, essentially because there are too many cofibrations in $\Secat_c$ compared to $\mathcal{SC}$.  The model structure $\Secat_f$ is designed to facilitate this comparison.
   
   We could instead drop the Reedy fibrancy condition on Segal spaces (and many authors do), but then the face maps used to define the limits in the codomains of the Segal maps need not be fibrations, so we need to take a homotopy limit instead.  For Segal categories, the discreteness in degree zero allows us to consider strict limits, so we do not need Reedy fibrancy. One could take an analogous Segal space localization in the projective model structure as well.  There are reasons why this model structure is not as well-behaved for comparisons as the one we have chosen; see \cite[\S 7]{thesis} for further discussion on this point.
\end{remark}

\begin{theorem} \cite[8.6]{thesis}
	The simplicial nerve functor $\mathcal{SC} \rightarrow \SSets^{\Deltaop}_{\disc}$ has a left adjoint which we denote by $F$.  This adjoint pair induces a Quillen equivalence
	\[ F \colon \Secat_f \rightleftarrows \mathcal{SC} \colon \snerve. \]
\end{theorem}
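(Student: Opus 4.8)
The plan is to first produce the left adjoint $F$, then verify the Quillen pair, and finally run the standard criterion for a Quillen equivalence, with the derived unit being the hard point.

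\textbf{Step 1: constructing $F$.} I would observe that $\snerve \colon \mathcal{SC} \to \SSets^{\Deltaop}_{\disc}$ preserves all small limits: a limit of simplicial categories has its mapping spaces and its object set computed levelwise, and the ordinary nerve $\Cat \to \SSets$ preserves limits, so $\snerve(\lim_i \mathcal C_i)_k = \lim_i \snerve(\mathcal C_i)_k$ in each simplicial degree $k$. Since $\mathcal{SC}$ and $\SSets^{\Deltaop}_{\disc}$ are locally presentable and $\snerve$ is accessible, the adjoint functor theorem produces $F$ (one can also give it explicitly as a left Kan extension). For the later steps I would record the values of $F$ on the objects generating cofibrations: $F$ is the identity on $0$-spaces (hence on object sets, since $X_0$ is already discrete), it sends the discrete Segal precategory on a set $S$ to the discrete simplicial category on $S$, and it sends both $G(k)^t$ and $\Delta[k]^t$ to the poset category $[k]$, so that $F$ carries the Segal map $G(k)^t \to \Delta[k]^t$ to the identity of $[k]$.

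\textbf{Step 2: the Quillen pair.} Because the fibrations of $\mathcal{SC}$ have no transparent generating set, I would check the Quillen pair from the left. The model structure $\Secat_f$ is a left Bousfield localization, at the Segal maps $G(k)^t \to \Delta[k]^t$, of a cofibrantly generated model structure on $\SSets^{\Deltaop}_{\disc}$ whose cofibrations are assembled from boundary inclusions; thus its generating cofibrations are those underlying generators together with the Segal maps, and its generating acyclic cofibrations are the underlying acyclic generators together with the localizing maps built from the Segal maps. Applying $F$, the underlying generators go to the standard generating cofibrations of $\mathcal{SC}$ (built from the boundary inclusions $\partial\Delta[m]\to\Delta[m]$ and from $\varnothing\to[0]$), the Segal maps go to identities, and for the acyclic generators one uses that $F$ preserves the pushouts involved and that the free-simplicial-category functors take weak equivalences of simplicial sets to Dwyer--Kan equivalences. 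Hence $F$ preserves cofibrations and acyclic cofibrations, and $(F,\snerve)$ is a Quillen pair.

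\textbf{Step 3: the Quillen equivalence.} I would invoke the criterion that a Quillen pair is a Quillen equivalence as soon as (i) the right adjoint reflects weak equivalences between fibrant objects and (ii) the derived unit $X \to \snerve(RFX)$ is a weak equivalence for every cofibrant $X$, where $R$ denotes fibrant replacement in $\mathcal{SC}$. For (i): if $\mathcal C$ is fibrant, then $\snerve(\mathcal C)_0 = \ob\mathcal C$ is discrete and $\snerve(\mathcal C)_1 = \coprod_{x,y}\Map_\mathcal C(x,y)$, so the homotopy fibre of $\snerve(\mathcal C)_1 \to \snerve(\mathcal C)_0 \times \snerve(\mathcal C)_0$ over $(x,y)$ is $\Map_\mathcal C(x,y)$ itself, a Kan complex; moreover the Segal maps of $\snerve(\mathcal C)$ are isomorphisms, so $\snerve(\mathcal C)$ is already a Segal category with $L\snerve(\mathcal C)\simeq\snerve(\mathcal C)$ and $\Ho(\snerve\mathcal C)=\pi_0\mathcal C$. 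It follows that $\snerve(f)$ is a Dwyer--Kan equivalence exactly when $f$ is, which is (i). For (ii): the unit $X\to\snerve(FX)$ is the identity on $0$-spaces, and since $FX$ is an honest simplicial category the target is again a Segal category; so (ii) reduces to showing $\map_{LX}(x,y)\to\Map_{FX}(x,y)$ is a weak equivalence for every cofibrant $X$. I would prove this by reducing to cell complexes: verify it for the generating cofibrant objects ($\varnothing$, the discrete ones, and the $G(k)^t$), and show that $F$ carries the pushouts and transfinite compositions building a cofibrant $X$ to homotopy pushouts and homotopy colimits of mapping spaces in $\mathcal{SC}$, so the property is inherited through the cell attachments.

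\textbf{The main obstacle.} The crux is (ii): controlling the mapping spaces of the rigidification $FX$ and showing that they recover $\map_{LX}(x,y)$ up to homotopy. This is precisely the phenomenon that makes the adjunction $\mathfrak C \dashv \widetilde N$ delicate — one must see that freely adjoining strictly associative composites does not change the homotopy type of the mapping spaces — and here it is compounded by the need to pass through the Segal-category replacement functor $L$ and by the discrepancy between $\Secat_f$ and the more natural $\Secat_c$, which is exactly why $\Secat_f$ appears in the statement. A secondary technical burden is to pin down the generating (acyclic) cofibrations of $\Secat_f$ explicitly enough to carry out the computations in Step 2.
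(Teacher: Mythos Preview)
This survey does not prove the statement; it merely cites \cite[8.6]{thesis}.  Your outline---construct $F$, verify the Quillen pair on generators, then reduce the Quillen equivalence to the derived unit---matches the strategy of that reference, and your identification of the derived unit as the crux is correct.  Your Step~1 computation that $F$ sends both $G(k)^t$ and $\Delta[k]^t$ to $[k]$ is also right and is used in the original argument.

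There is, however, a technical misstep in Step~2.  You assert that the generating cofibrations of $\Secat_f$ are ``those underlying generators together with the Segal maps.''  Left Bousfield localization never alters the class of cofibrations, so the Segal maps could at most join the generating \emph{acyclic} cofibrations, not the generating cofibrations; the sentence as written is simply false.  More importantly, $\Secat_f$ is not obtained in \cite{thesis} as a localization of an off-the-shelf projective model structure on Segal precategories: it is built by hand, with a bespoke set of generating cofibrations chosen precisely so that $F$ carries them to the generating cofibrations of $\mathcal{SC}$.  This is the entire reason $\Secat_f$ exists, as the remark immediately following the theorem in the survey explains---$\Secat_c$ has too many cofibrations for a direct comparison with $\mathcal{SC}$.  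Consequently the ``secondary technical burden'' you mention at the end---pinning down the generating (acyclic) cofibrations of $\Secat_f$ explicitly---is not secondary at all; it is logically prior to your Step~2 and is where a substantial portion of the work in the cited reference actually resides.  Until that is done, your argument that $F$ preserves (acyclic) cofibrations has no content.
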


This left adjoint functor $F$ can be thought of as a ``rigidification" of a Segal category to a simplicial category.

The model structure $\Secat_c$, on the other hand, is well-suited to comparison with our alternate approach to making Segal spaces models for $(\infty,1)$-categories.  

A first question we might ask is why we want an alternative to Segal categories.  The main difficulty with them is the fact that we need their degree 0 space to be discrete, which is an unnnatural condition from the perspective of homotopy theory.  We could weaken this condition to homotopy discreteness, but there is another point of view, which we now describe.

Let us return to the way in which we talked about Segal categories in the language of simplicial categories.  The definitions we made above make sense for more general Segal spaces.  In particular, given a Segal space $W$, we define its \emph{space of homotopy equivalences} to be the subspace of $W_1$ whose image in $\Ho(W)$ consists of isomorphisms, and we denote it by $W_{\heq}$.  Observe that the degeneracy map $s_0 \colon W_0 \rightarrow W_1$ factors through $W_{\heq}$.

\begin{definition}
A Segal space $W$ is \emph{complete} if the map $W_0 \rightarrow W_{\heq}$ is a weak equivalence of simplicial sets.
\end{definition}

The idea behind this completeness condition is that the space of objects, which we no longer assume to be discrete, is instead encoded into the space of morphisms.

\begin{theorem} \cite[7.2]{rezk}
There is a model structure on the category of simplicial spaces, denoted by $\css$, in which all objects are cofibrant and the fibrant objects are precisely the complete Segal spaces.  Furthermore, this model structure is cartesian.
\end{theorem}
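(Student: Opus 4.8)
The plan is to realize $\css$ as a left Bousfield localization of the Reedy model structure on simplicial spaces and then to check, on top of the localization machinery, that the cartesian structure survives. First I would recall that the Reedy model structure on $\SSets^{\Deltaop}$ — which, as noted above, agrees with the injective one, so its cofibrations are the levelwise monomorphisms and its weak equivalences the levelwise weak equivalences — is left proper and cellular (being inherited levelwise from $\SSets$), and has \emph{all} objects cofibrant. I would then localize at the set $S$ consisting of the Segal maps $G(k)^t \to \Delta[k]^t$ for all $k \geq 2$ together with one further map $E^t \to \Delta[0]$, where $E$ is the nerve of the groupoid with two objects and a unique isomorphism between them, viewed as a levelwise discrete simplicial space; this last map encodes invertibility of a $1$-morphism. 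By the localization theorem quoted above \cite[4.1.1]{hirsch}, applicable since the Reedy structure is left proper and cellular, there is a model structure $\css$ on $\SSets^{\Deltaop}$ with the same cofibrations as the Reedy structure, weak equivalences the $S$-local equivalences, and fibrant objects the $S$-local objects. Since the cofibrations are unchanged, all objects remain cofibrant. Equivalently one may localize in two stages: localizing at the Segal maps alone gives $\sesp$ \cite[7.1]{rezk}, and localizing $\sesp$ further at $E^t \to \Delta[0]$ gives the same model category, since a localization of a localization is the localization at the union of the two sets of maps.

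Next I would identify the fibrant objects as the complete Segal spaces. For a Reedy fibrant $W$, homotopy mapping spaces out of a (levelwise discrete, hence Reedy cofibrant) object are computed by the honest simplicial mapping space, and by the description of the Segal map recalled above the map induced by $G(k)^t \hookrightarrow \Delta[k]^t$ is exactly $W_k \to W_1 \times_{W_0} \cdots \times_{W_0} W_1$; hence $W$ is local with respect to the Segal maps precisely when it is a Segal space. It then remains to show that a Segal space $W$ is $E^t \to \Delta[0]$-local if and only if it is complete. The key lemma, due to Rezk, is that for a Segal space $W$ there is a natural weak equivalence $\Map^h(E^t, W) \simeq W_{\heq}$; granting it, the map $W_0 = \Map^h(\Delta[0], W) \to \Map^h(E^t, W) \simeq W_{\heq}$ induced by $E^t \to \Delta[0]$ is identified with the map $W_0 \to W_{\heq}$ appearing in the completeness condition, so $E^t \to \Delta[0]$-locality is exactly completeness, and the fibrant objects of $\css$ are the complete Segal spaces. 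Establishing the lemma — unwinding $E$ as a union of copies of $\Delta[1]$ and using the Segal condition together with $2$-out-of-$3$ to see that a map $E^t \to W$ carries the same homotopical data as a point of $W_{\heq}$ — is, I expect, the main technical obstacle in this part.

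Finally, for the cartesian property, I would first observe that the Reedy model structure on $\SSets^{\Deltaop}$ is itself cartesian: finite products and pushouts of simplicial spaces are computed levelwise and $\SSets$ satisfies the pushout-product axiom, so $\SSets^{\Deltaop}$ does too, while the unit $\Delta[0]$ is cofibrant. Following Rezk, a left Bousfield localization of a cartesian model category is again cartesian provided the pushout-product of each map in the localizing set $S$ with each generating cofibration is an $S$-local equivalence; since the generating cofibrations of $\SSets^{\Deltaop}$ are assembled from the $\Delta[n]$ and $\Delta[n]^t$, this reduces to checking that $f \times \Delta[n]$ and $f \times \Delta[n]^t$ are $S$-local equivalences for every $f \in S$ and every $n$. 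For the Segal maps this is a direct combinatorial comparison of $G(k)^t \times \Delta[n]$ with $\Delta[k]^t \times \Delta[n]$; for $E^t \to \Delta[0]$ it follows from the lemma of the previous paragraph together with the observation that $E^t \times \Delta[n]$ is again a homotopically contractible groupoid-like object. This yields that $\css$ is cartesian. As with the identification of $W_{\heq}$, the cartesian-compatibility checks are where the real work lies; the remainder is a formal application of the localization theorem.
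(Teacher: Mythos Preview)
The paper does not give its own proof of this statement; it is a survey and simply cites the result from \cite[7.2]{rezk}. Your outline is essentially a faithful sketch of Rezk's original argument: obtain $\css$ as a left Bousfield localization of the Reedy structure at the Segal maps together with the single map $E^t \to \Delta[0]$, identify the $S$-local objects with complete Segal spaces via the lemma $\Map^h(E^t,W)\simeq W_{\heq}$, and verify cartesianness by checking compatibility of the localizing maps with products by representables.

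One small point of precision: for the cartesianness step you write that one must check that the \emph{pushout-product} of each $f\in S$ with each generating cofibration is an $S$-local equivalence. In fact, since the underlying Reedy structure is already cartesian and the cofibrations are unchanged under localization, it suffices to check that the ordinary product $f\times X$ is an $S$-local equivalence for each $f\in S$ and each (generating) object $X$; the pushout-product statement then follows from two-out-of-three. This is the form in which Rezk states and uses the criterion. Otherwise your identification of the genuine technical content---the equivalence $\Map^h(E^t,W)\simeq W_{\heq}$ and the product-stability of the localizing maps---is accurate, and there is no gap in the strategy.
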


To compare this model structure to $\Secat_c$, we need a way to ``discretize" the degree zero space of a complete Segal space to get a Segal category.

\begin{theorem} \cite[6.3]{thesis}
The inclusion functor $I \colon \SSets^{\Deltaop}_{\disc} \rightarrow \SSets^{\Deltaop}$ admits a right adjoint $D$.  This adjoint pair induces a Quillen equivalence
\[ I \colon \Secat_c \leftrightarrows \css \colon D.\]
\end{theorem}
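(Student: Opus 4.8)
The plan is to construct the right adjoint $D$ explicitly, exhibiting $\SSets^{\Deltaop}_{\disc}$ as a coreflective subcategory of $\SSets^{\Deltaop}$, then to verify that $(I,D)$ is a Quillen pair using the theory of localizations together with Rezk's characterization of the weak equivalences of $\css$, and finally to deduce the Quillen equivalence from a standard criterion whose two inputs are an estimate on the counit and a statement that $I$ detects weak equivalences. To build $D$, given a simplicial space $Y$ let $\ev_0 Y_0$ be the discrete simplicial set on the set of $0$-simplices of $Y_0$, with its inclusion $\ev_0 Y_0 \hookrightarrow Y_0$, and let $Y_n \to Y_0^{\,n+1}$ be the product of the $n+1$ vertex maps. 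I would define $DY$ levelwise by the pullback of simplicial sets
\[ (DY)_n \;=\; Y_n \times_{Y_0^{\,n+1}} (\ev_0 Y_0)^{\,n+1}, \]
which is functorial in $[n]\in\Deltaop$ because the vertex maps are natural and which satisfies $(DY)_0 = \ev_0 Y_0$, so $DY$ is a Segal precategory. The natural bijection $\Hom_{\SSets^{\Deltaop}}(IX,Y)\cong\Hom_{\SSets^{\Deltaop}_{\disc}}(X,DY)$ is then formal: since $X_0$ is discrete, any map $X_0\to Y_0$ factors uniquely through $\ev_0 Y_0$, so any map of simplicial spaces $IX\to Y$ lifts uniquely and naturally along the projection $DY\to Y$. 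A useful feature of this description is that $DIX=X$ on the nose, so the unit of the adjunction is the identity and $I$ is fully faithful; one also checks, by pullback stability of Reedy fibrations and of the Segal condition, that $D$ sends Reedy fibrant simplicial spaces to Reedy fibrant Segal spaces.

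\emph{The Quillen pair.} The cofibrations of $\css$ are the monomorphisms of simplicial spaces, those of $\Secat_c$ are the monomorphisms of Segal precategories, and $I$ is the identity in each simplicial degree, so $I$ preserves cofibrations. Both $\css$ and $\Secat_c$ are left Bousfield localizations of ``Reedy-type'' model structures for which $I$ is visibly left Quillen, so to show that $I$ preserves acyclic cofibrations it is enough, by the characterization of acyclic cofibrations in a localization, to check that $I$ carries the localizing maps on the Segal-precategory side to weak equivalences of $\css$. The Segal maps $G(k)^t\to\Delta[k]^t$ go to the Segal maps, which are $\css$-local equivalences, and the map forcing invertibility --- a Segal-category model $E$ of the walking isomorphism mapping to a point --- goes to a map $E\to\ast$ that is a Dwyer--Kan equivalence of Segal spaces, hence a weak equivalence of $\css$ by Rezk's identification of the $\css$-weak equivalences between Segal spaces with the Dwyer--Kan equivalences. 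Thus $I$ is left Quillen and $D$ is right Quillen; by Ken Brown's lemma $I$ then preserves all weak equivalences (every object of $\Secat_c$ is cofibrant) and $D$ preserves weak equivalences between complete Segal spaces.

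\emph{The Quillen equivalence.} Since the unit is an isomorphism, $(I,D)$ is a Quillen equivalence provided (a) the counit $\varepsilon_W\colon IDW\to W$ is a weak equivalence of $\css$ for every complete Segal space $W$, and (b) $I$ reflects weak equivalences between cofibrant objects \cite{hovey}. For (a): $DW$ is a Reedy fibrant Segal space by the first paragraph, the map $\varepsilon_W$ is the identity on objects (the objects of $DW$ are exactly the $0$-simplices of $W_0$, which are the objects of $\Ho(W)$), and $\map_{DW}(x,y)\to\map_W(x,y)$ is an isomorphism because $(DW)_1$ is the pullback of the fibration $W_1\to W_0\times W_0$ along $(\ev_0 W_0)^2\hookrightarrow(W_0)^2$, so the fibre over a pair $(x,y)$ of $0$-simplices is unchanged; hence $\varepsilon_W$ is a Dwyer--Kan equivalence of Segal spaces, so a weak equivalence of $\css$ by Rezk's characterization. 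For (b): if $If$ is a $\css$-weak equivalence, complete both sides; using that $I$ preserves weak equivalences one identifies $\widehat{IX}$ with $\widehat{ILX}$ and $\widehat{IY}$ with $\widehat{ILY}$, so $\widehat{ILf}$ is a weak equivalence between complete Segal spaces, hence levelwise; running Rezk's characterization backwards, $ILf$ is a Dwyer--Kan equivalence of Segal spaces, so $Lf$ is a Dwyer--Kan equivalence of Segal categories, which is to say $f$ is a weak equivalence of $\Secat_c$.

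\emph{Where the work is.} The construction of $D$, the adjunction, and the counit estimate are essentially formal once the pullback-stability lemmas for $D$ are in place. The substance is concentrated in the two weak-equivalence comparisons --- that $I$ both preserves and reflects weak equivalences between the \emph{localized} model structures $\Secat_c$ and $\css$ --- and this is exactly where Rezk's analysis of $\css$, and through it the completeness condition, is indispensable: it is completeness that makes Dwyer--Kan equivalences of Segal spaces into weak equivalences of $\css$, and without it one would at best land in the Segal space model structure $\sesp$, for which $D$ is not even right Quillen. I expect the main obstacle to be the careful bookkeeping of fibrant replacements --- $L$ on the Segal-precategory side, and Reedy-fibrant, Segal, and completion replacements on the simplicial-space side --- needed to make the comparisons in steps (a) and (b) genuinely rigorous, rather than any single hard inequality or estimate.
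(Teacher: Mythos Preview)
Your approach is correct and is essentially the argument behind the paper's sketch, carried out in more detail. Your explicit construction of $D$ via the pullback
\[ (DY)_n = Y_n \times_{Y_0^{\,n+1}} (\ev_0 Y_0)^{\,n+1} \]
is the genuine right adjoint to $I$; note that the paper's phrase that $D$ ``effectively collapses the simplicial set in degree zero to its set of components'' is informal and somewhat misleading, since a $\pi_0$-quotient would be a left-adjoint-type construction rather than a right adjoint. The core of your argument --- that for a complete Segal space $W$ the counit $IDW \to W$ is the identity on objects and induces isomorphisms on mapping spaces (immediate from the pullback description), hence is a Dwyer--Kan equivalence of Segal spaces and therefore a $\css$-weak equivalence by Rezk's theorem --- is exactly the step the paper singles out when it says the proof ``essentially reduces to showing that the functor $D$ preserves Dwyer-Kan equivalences.''

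Two minor cautions. First, in \cite{thesis} the model structure $\Secat_c$ is obtained by localizing only at the Segal maps; no additional map $E\to\ast$ is needed, as the resulting weak equivalences already coincide with the Dwyer--Kan equivalences. This does not damage your argument, since $E\to\ast$ is a Dwyer--Kan equivalence in any case and hence goes to a $\css$-weak equivalence under $I$. Second, your assertion that $\Secat_c$ arises as a left Bousfield localization of a ``Reedy-type'' model structure on Segal precategories for which $I$ is visibly left Quillen is true but is one of the more delicate points of \cite{thesis}; it deserves a citation rather than the parenthetical ``visibly''. Also, where you write that $D$ ``sends Reedy fibrant simplicial spaces to Reedy fibrant Segal spaces'', you presumably mean that $D$ sends Reedy fibrant \emph{Segal spaces} to Reedy fibrant \emph{Segal categories}; the Segal condition is preserved under your pullback because for Reedy fibrant $W$ the Segal maps are trivial fibrations, but it is certainly not created from nothing.
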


To understand why this theorem works, let us look at the role of Dwyer-Kan equivalences in the model structure $\css$ via the following theorem of Rezk.

\begin{theorem}  \cite[7.6, 7.7]{rezk}
\begin{enumerate}
\item Let $f \colon W \rightarrow Z$ be a map of Segal spaces.  Then $f$ is a Dwyer-Kan equivalence if and only if it is a weak equivalence in $\css$.

\item Let $f \colon W \rightarrow Z$ be a map of complete Segal spaces.  Then $f$ is a Dwyer-Kan equivalence if and only if it is a weak equivalence in $\css$.
\end{enumerate}
\end{theorem}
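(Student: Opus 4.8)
The plan is to prove part (2) first---this is where the real work is---and then to deduce part (1) from it together with Rezk's completion functor. For part (2), observe that $\css$ is a left Bousfield localization of the Reedy (equivalently, here, injective) model structure on simplicial spaces, whose fibrant objects are precisely the complete Segal spaces; by the standard fact about such localizations (see \cite{hirsch}), a map between complete Segal spaces is a weak equivalence in $\css$ if and only if it is a Reedy weak equivalence, i.e.\ each $f_n\colon W_n\to Z_n$ is a weak equivalence of simplicial sets. So part (2) reduces to: a map $f\colon W\to Z$ of complete Segal spaces is a Dwyer--Kan equivalence if and only if it is a levelwise weak equivalence. One implication needs no completeness: if $f$ is levelwise, then since each $\map_W(x,y)$ is by definition a homotopy pullback of $W_1\to W_0\times W_0\leftarrow\{(x,y)\}$, the induced maps on mapping spaces are weak equivalences, and since $f_0$ is a weak equivalence and any two objects in the same path component of $W_0$ become isomorphic in $\Ho(W)$ (lift a path along the fibration $W_1\to W_0\times W_0$, using that identities are homotopy equivalences), the functor $\Ho(f)$ is essentially surjective.

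For the converse, let $f$ be a Dwyer--Kan equivalence; we check each $f_n$ is a weak equivalence in three steps. \emph{Step 1:} $f_0$ is a weak equivalence. Here completeness enters twice. First, the factorization $W_0\xrightarrow{s_0}W_{\heq}\hookrightarrow W_1$ together with the weak equivalence $W_0\simeq W_{\heq}$ forces two objects of a complete Segal space to lie in the same path component of $W_0$ exactly when they are isomorphic in $\Ho(W)$; thus $\pi_0(W_0)$ is the set of isomorphism classes of $\Ho(W)$, and faithfulness plus essential surjectivity of $\Ho(f)$ make $\pi_0(f_0)$ a bijection. Second, fixing an object $c$, the path component of $c$ in $W_0$ is carried by $s_0$ weakly equivalently onto the path component of $\id_c$ in $W_{\heq}$; analyzing the (source, target)-fibration $W_{\heq}\to W_0\times W_0$ over that component---its fibre is the space $\map^{\heq}_W(c,c)$ of self-equivalences of $c$, and the composite $W_0\xrightarrow{s_0}W_{\heq}\to W_0\times W_0$ is the diagonal---identifies, naturally in $W$, the homotopy groups of this component of $W_0$ with those of $\map^{\heq}_W(c,c)$ shifted by one. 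Since $f$ being a Dwyer--Kan equivalence supplies weak equivalences $\map^{\heq}_W(c,c)\to\map^{\heq}_Z(fc,fc)$ (these are unions of path components of the mapping-space equivalences), it follows that $f_0$ induces isomorphisms on all homotopy groups. \emph{Step 2:} $f_1$ is a weak equivalence, by comparing the fibrations $W_1\to W_0\times W_0$ and $Z_1\to Z_0\times Z_0$ (fibrations by Reedy fibrancy): the base map $f_0\times f_0$ is now a weak equivalence by Step 1 and on each fibre $f_1$ restricts to a mapping-space equivalence, so the standard criterion---a map of fibrations lying over a weak equivalence and inducing weak equivalences on all fibres is itself a weak equivalence---applies. \emph{Step 3:} for $n\geq 2$, the Segal maps exhibit $W_n$ and $Z_n$ as the homotopy pullbacks $W_1\times_{W_0}\cdots\times_{W_0}W_1$ and $Z_1\times_{Z_0}\cdots\times_{Z_0}Z_1$ (homotopy pullbacks by Reedy fibrancy), so Steps 1--2 and the gluing lemma give that $f_n$ is a weak equivalence. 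This proves part (2).

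For part (1), recall Rezk's completion functor \cite{rezk}: it sends a Segal space $W$ to a complete Segal space $\widehat W$ with a natural map $W\to\widehat W$ that is simultaneously a Dwyer--Kan equivalence and a weak equivalence in $\css$. Given $f\colon W\to Z$ of Segal spaces, completion produces a commuting square with $\widehat f\colon\widehat W\to\widehat Z$ across the top and the completion maps down the sides. If $f$ is a weak equivalence in $\css$, then so is $\widehat f$ by two-out-of-three; being a $\css$-equivalence between complete Segal spaces it is levelwise, hence a Dwyer--Kan equivalence by part (2), whence $f$ is a Dwyer--Kan equivalence (Dwyer--Kan equivalences satisfy two-out-of-three). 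Conversely, if $f$ is a Dwyer--Kan equivalence, then $\widehat f$ is a Dwyer--Kan equivalence of complete Segal spaces, hence levelwise, hence a $\css$-equivalence by part (2), and therefore $f$ is a weak equivalence in $\css$.

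The only genuinely difficult point is Step 1 of part (2): that a Dwyer--Kan equivalence of \emph{complete} Segal spaces is already a weak equivalence in degree zero. Everything else is formal bookkeeping with homotopy (co)limits, fibrations, and localizations, but Step 1 is exactly where completeness is used in an essential way---it is what pins the homotopy type of the object space down to the mapping spaces---and it is the reason the statement is false for Segal spaces in general.
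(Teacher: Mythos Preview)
The paper does not supply its own proof of this theorem; it is a survey and simply cites Rezk \cite[7.6, 7.7]{rezk}. Your argument is essentially Rezk's original one: reduce part (2) to showing that Dwyer--Kan equivalences between complete Segal spaces are levelwise (via the standard localization fact), handle degree zero using completeness to tie the homotopy type of $W_0$ to the spaces of self-equivalences, propagate to degree one via the fibration $W_1\to W_0\times W_0$, and to higher degrees via the Segal maps; then deduce part (1) by applying the completion functor and two-out-of-three. This is correct and is the route Rezk takes, so there is nothing substantive to contrast.

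One small remark on Step 1: your identification of the higher homotopy groups of the component of $c$ in $W_0$ with those of $\map^{\heq}_W(c,c)$ shifted by one is the content of Rezk's Theorem 6.2, which uses the long exact sequence of the fibration $W_{\heq}\to W_0\times W_0$ together with the section provided by the degeneracy; you have sketched exactly this, but it may be worth citing that result explicitly rather than re-deriving it.
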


The right adjoint $D$ to the inclusion functor effectively collapses the simplicial set in degree zero to its set of components.  If we apply this functor to a complete Segal space $W$, then the result is typically no longer complete, but it is a Segal category.  If $W \rightarrow Z$ is a weak equivalence between complete Segal spaces in $\css$, the Quillen equivalence above essentially reduces to showing that the functor $D$ preserves Dwyer-Kan equivalences.

\begin{remark} \label{cssandsecat}
    A natural question to ask is when complete Segal spaces and Segal categories coincide.  The answer is not very often!  A simplicial space $X$ which is both a complete Segal space and a Segal category satisfies both $X_0 \simeq X_{\heq}$ and $X_0$ is discrete.  In other words, the space of homotopy equivalences of $X$ must be homotopy discrete, and $\Ho(X)$ is a category with no non-identity automorphisms of objects.  It can have non-identity isomorphisms, but they must be unique between two given objects.  An analogous structure is a simplicial category for which all homotopy automorphisms are homotopic to identity morphisms and whose homotopy equivalences between two given objects form a contractible space.
\end{remark}

That there are also Quillen equivalences (in both directions!) between $\Qcat$ and $\css$, and between $\Qcat$ and $\Secat_c$, was proved by Joyal and Tierney \cite{jt}.  We discuss one of these comparisons.

\begin{theorem} \cite[4.11]{jt}
The evaluation map $\ev_0$, taking a simplicial space $W$ to the simplicial space $W_{*,0}$, is right adjoint to the inclusion functor $i \colon \SSets \rightarrow \SSets^{\Deltaop}$, taking a simplicial set $K$ to the simplicial space $Z$ with $Z_{*,n} = K$ for all $n$.  This adjoint pair induces a Quillen equivalence 
\[ i \colon \Qcat \rightleftarrows \css \colon \ev_0. \] 
\end{theorem}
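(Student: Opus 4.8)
\textit{Proof proposal.} The plan is to first check that $(i,\ev_0)$ is a Quillen pair, and then to upgrade it to a Quillen equivalence using the characterization of the weak equivalences of $\css$ between Segal spaces as Dwyer--Kan equivalences (the theorem of Rezk quoted above). For the Quillen pair: the cofibrations of both $\Qcat$ and $\css$ are exactly the monomorphisms, and $i$, which sends a simplicial set $K$ to the levelwise-discrete simplicial space $K^t$, visibly preserves monomorphisms, so $i$ preserves cofibrations. For acyclic cofibrations one uses that $\Qcat$ is cofibrantly generated: it suffices that $i$ carry a generating set of acyclic cofibrations into the $\css$-acyclic cofibrations, the general case following by the usual argument with retracts and relative cell complexes, since $i$ is cocontinuous and preserves cofibrations. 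Such a generating set can, by results of Joyal, be taken to involve only the inner horn inclusions $V[n,k]\to\Delta[n]$ with $0<k<n$ together with a map built from the inclusion $\Delta[0]\to J$, where $J$ is the nerve of the free-standing isomorphism. The inner horn inclusions are sent to $\css$-acyclic cofibrations because $\css$ arises from the Reedy model structure by inverting the Segal maps $G(k)^t\to\Delta[k]^t$, which forces every $V[n,k]^t\to\Delta[n]^t$ to become acyclic; and the $J$-generator is sent across because the completeness part of the same localization inverts (a map weakly equivalent to) its image under $(-)^t$.

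To pass from a Quillen pair to a Quillen equivalence I would use the standard criterion that it is enough for $\ev_0$ to reflect weak equivalences between fibrant objects and for the derived unit to be a weak equivalence on every (cofibrant) object. For the first condition, let $f\colon W\to Z$ be a map of complete Segal spaces. I would invoke two facts about the $0$-th row functor: that $\ev_0 W$ is a quasi-category whenever $W$ is a complete Segal space (the lifts needed to fill inner horns of $\ev_0 W$ correspond under adjunction to lifts against the $\css$-acyclic cofibrations $V[n,k]^t\to\Delta[n]^t$, which $W$ admits), and that the homotopy category $\Ho(W)$ and the mapping spaces $\map_W(x,y)$ of $W$ in the sense of Segal spaces agree, up to natural equivalence, with the homotopy category and mapping spaces of the quasi-category $\ev_0 W$. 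Granting these, $f$ is a $\css$-equivalence iff it is a Dwyer--Kan equivalence (by Rezk's theorem), iff $\ev_0 f$ is essentially surjective and fully faithful, iff $\ev_0 f$ is a weak equivalence in $\Qcat$; so $\ev_0$ reflects, and (being right Quillen) also preserves, weak equivalences between fibrant objects. The comparison of homotopy categories and mapping spaces here is itself a genuine computation that uses the Segal and completeness conditions essentially, but it is of the expected shape.

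The remaining step, and the one I expect to be the main obstacle, is that the derived unit is a weak equivalence. Because $\ev_0\circ i=\id$, the unit is the identity, so the statement is: for every simplicial set $X$, the map $X=\ev_0(iX)\to\ev_0\big((X^t)^{\mathrm{fib}}\big)$ induced by a fibrant replacement $X^t\to (X^t)^{\mathrm{fib}}$ in $\css$ is a weak equivalence in $\Qcat$. Since $i$ preserves weak equivalences of cofibrant objects and every object of $\Qcat$ is cofibrant, one may first replace $X$ by a quasi-category, so that the claim becomes: for a quasi-category $X$, completing $X^t$ to a complete Segal space and then taking its $0$-th row recovers $X$ up to weak equivalence in $\Qcat$. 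The approach I would take is to identify $(X^t)^{\mathrm{fib}}$ explicitly with the complete Segal space canonically associated to the quasi-category $X$ — whose $0$-th row is $X$ essentially by construction — by checking that $X^t$ already satisfies the Segal condition and that the completion alters only the degree-zero space. An alternative is to route through the chain of Quillen equivalences relating $\Qcat$, $\mathcal{SC}$, $\Secat_f$, $\Secat_c$, and $\css$ recorded earlier, and to verify that the total left derived functor of $i$ agrees with the resulting composite; once the Quillen pair is in hand, showing that this derived functor is an equivalence of homotopy categories suffices. With the derived unit settled, the criterion recalled above yields the Quillen equivalence.
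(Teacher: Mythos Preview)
The paper does not prove this theorem; it is a survey and simply records the statement with a citation to Joyal--Tierney \cite[4.11]{jt}.  So there is no argument in the paper to compare yours against.

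Independently of that, your sketch has a genuine gap in the derived-unit step.  You propose, after replacing $X$ by a quasi-category, to verify that $X^t$ ``already satisfies the Segal condition'' and that completion then only perturbs the degree-zero space.  This is false.  The simplicial space $X^t$ has each level $(X^t)_m$ equal to the \emph{discrete} simplicial set on the set $X_m$, so the Segal map
\[
(X^t)_k \longrightarrow \underbrace{(X^t)_1 \times_{(X^t)_0} \cdots \times_{(X^t)_0} (X^t)_1}_{k}
\]
is a map between discrete simplicial sets, hence a weak equivalence if and only if it is a bijection $X_k \cong X_1 \times_{X_0} \cdots \times_{X_0} X_1$.  That is precisely the strict nerve condition, which holds only when $X$ is the nerve of an ordinary category, not for a general quasi-category.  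Fibrant replacement of $X^t$ in $\css$ therefore changes every level, not just level zero, and there is no elementary description of the resulting $0$-th row of the sort you suggest.  Your fallback---routing through the chain $\Qcat \leftrightarrows \mathcal{SC} \leftrightarrows \Secat_f \leftrightarrows \Secat_c \leftrightarrows \css$ and matching the composite derived functor with $Li$---is logically available once the Quillen pair is in hand, but that matching is itself real work and in any case does not give a direct proof.  The Joyal--Tierney argument instead produces an explicit fibrant replacement for $X^t$ using function complexes $X^{\Delta[m]}$ and a second Quillen adjunction between the same two model categories, and reads off the behaviour of $\ev_0$ from that construction; nothing like your proposed shortcut is used.
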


While we have by no means covered all possible models for $(\infty,1)$-categories, the ones we have here give a sense of how they can be described.  We now look toward moving up one more level to $(\infty,2)$-categories.

\section{$(\infty,2)$-categories as enriched categories}

The first approach to obtaining models for $(\infty,2)$-categories is to take categories enriched in any one of our models for $(\infty,1)$-categories.  Simply defining such objects is not a problem for any of the models that we have, as each of the underlying categories has a well-behaved monoidal structure under cartesian product.  However, as we have already seen for simplicial categories, we do not expect that all of these models have corresponding model structures.  The key feature we need if we want such a model structure is that the enrichment is taken over a cartesian model category.  As we saw in the previous section, three of the models have cartesian model structures: $\Qcat$, $\css$, and $\Secat_c$.

In fact, we do get model structures if we enrich in any of these model structures.  The general strategy is spelled out by Lurie in in Appendix A of \cite{lurie}.  The main idea is that we want a model structure on the category of small categories enriched in a cartesian model category $\mathcal V$, denoted by $\mathcal V$-$\Cat$, which is analogous to the model structure for simplicial categories, with a variant of Dwyer-Kan equivalences as weak equivalences.  But how do we define these maps in a more general enriched category?

The first condition, that of being homotopically fully faithful, is not hard to generalize.  We simply ask that the induced maps on mapping objects, which we denote by $\uMap_\mathcal C(x,y)$, be weak equivalences in $\mathcal V$.  But what about essential surjectivity?  For Dwyer-Kan equivalences of simplicial categories, we used the category of components $\pi_0 \mathcal C$ of a simplicial category $\mathcal C$.  Since there we had mapping simplicial sets, taking $\pi_0$ was a natural thing to do.  More generally, we define $\pi_0 \mathcal C$ to be the category whose objects are those of $\mathcal C$ and whose morphisms are given by
\[ \Hom_{\pi_0 \mathcal C}(x,y) = \Hom_{\Ho(\mathcal V)}(\ast, \uMap_\mathcal C(x,y)), \]
where $\ast$ denotes the terminal object of $\mathcal V$.

\begin{definition}
A $\mathcal V$-enriched functor $f \colon \mathcal C \rightarrow \mathcal V$ is a \emph{Dwyer-Kan equivalence} if:
\begin{enumerate}
    \item for every $x,y \in \ob(\mathcal C)$, the map
    \[ \uMap_\mathcal C(x,y) \rightarrow \uMap_\mathcal D(fx,fy) \]
    is a weak equivalence in $\mathcal V$, and 
    
    \item the functor $\pi_0 (\mathcal C) \rightarrow \pi_0(\mathcal D)$ is essentially surjective.
\end{enumerate}
\end{definition}

Let us now show that we have the desired model categories.  In the case in which we enrich in the complete Segal space model structure $\css$, a full proof applying this strategy is given by the author and Rezk in \cite{inftyn1}.

\begin{theorem} \cite[3.11]{inftyn1}
There is a cofibrantly generated model structure on $\css$-$\Cat$ in 
which the weak equivalences $f \colon \mathcal C \rightarrow \mathcal D$ are Dwyer-Kan equivalences.
\end{theorem}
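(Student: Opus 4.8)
\subsection*{Proof proposal}

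The plan is to realize this model structure as an instance of Lurie's general construction of model categories of enriched categories \cite[Appendix A.3.2]{lurie}, which is precisely the strategy indicated above. Lurie shows that whenever a monoidal model category $\mathcal V$ satisfies the conditions in his definition of an \emph{excellent} model category --- combinatorial and left proper, with cofibrations the monomorphisms, with weak equivalences stable under filtered colimits, and satisfying the \emph{invertibility hypothesis} \cite[A.3.2.12]{lurie} --- the category $\mathcal V$-$\Cat$ carries a combinatorial, hence cofibrantly generated, model structure whose weak equivalences are exactly the $\mathcal V$-functors that induce weak equivalences on all mapping objects together with an essentially surjective functor on homotopy categories, i.e.\ the Dwyer--Kan equivalences in the sense defined above. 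So the entire task reduces to verifying that $\mathcal V = \css$ is excellent.

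Most of the conditions are inherited from the way $\css$ is built. By Rezk's Theorem \cite[7.2]{rezk}, $\css$ is a left Bousfield localization of the Reedy model structure on simplicial spaces; that structure is combinatorial, left proper, and cartesian with cofibrations the monomorphisms, and all of these properties pass to a left Bousfield localization at a set of maps \cite{barwick}, so $\css$ is again combinatorial, left proper, and cartesian with cofibrations the monomorphisms. In particular it is cofibrantly generated, every object is cofibrant, and the monoidal unit $\ast$ is cofibrant, and the cartesian structure supplies the required compatibility between $\times$, the internal hom, and the model structure. Stability of the weak equivalences under filtered colimits is then checked directly, using that levelwise weak equivalences of simplicial spaces have this property and tracing it through the localization; this step is routine.

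The genuine obstacle is the invertibility hypothesis. Its essential content in this setting is that adjoining a formal homotopy inverse to an equivalence inside a $\css$-enriched category should change nothing up to Dwyer--Kan equivalence; concretely, it comes down to the statement that the ``walking equivalence'' $\css$-category --- two objects, all four mapping objects equal to a fixed cofibrant interval $\mathbb E$ (a suitable model of the nerve of the contractible groupoid on two objects), with the evident composition --- is Dwyer--Kan equivalent to the terminal $\css$-category. This is exactly where the completeness condition earns its keep: the defining requirement that $W_0 \to W_{\heq}$ be a weak equivalence for every fibrant $W$ is precisely the assertion that $\mathbb E$ is weakly contractible in $\css$, so all the mapping objects of the walking equivalence are weakly equivalent to $\ast$ and its homotopy category is terminal. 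What remains is the enriched bookkeeping needed to feed this into \cite[A.3.2.12]{lurie} --- keeping track of the cofibrancy and acyclicity of the maps in sight and of compositions --- which I would handle by exploiting that $\css$ is simplicial, left proper, and cartesian with all objects cofibrant, so that the abstract tensors appearing in Lurie's criterion can be replaced by honest simplicial mapping-space computations. Once $\css$ has been shown to be excellent, the theorem, including the identification of the weak equivalences with the Dwyer--Kan equivalences, follows directly from \cite[Appendix A.3.2]{lurie}.
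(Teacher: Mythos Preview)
Your proposal is correct and follows exactly the strategy the paper indicates: apply Lurie's criterion from \cite[A.3.2]{lurie} by checking that $\css$ is an excellent cartesian model category, with the completeness axiom supplying the contractibility of the interval $E$ needed for the invertibility hypothesis. The paper does not give its own proof but points to this strategy and cites \cite[3.11]{inftyn1}, where the argument is carried out in detail along essentially these lines (there the conditions for a cofibrantly generated model structure are verified somewhat more directly, in the style of the model structure on simplicial categories, rather than by invoking the packaged notion of ``excellent'', but the substance is the same).
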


Essentially the same proof technique can be used to obtain a similar result for enriching in $\Secat_c$.

\begin{theorem} 
There is a cofibrantly generated model structure on $\Secat_c$-$\Cat$ in which the weak equivalences $f \colon \mathcal C \rightarrow \mathcal D$ are the Dwyer-Kan equivalences.
\end{theorem}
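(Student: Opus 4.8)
The plan is to apply Lurie's general construction of model structures on enriched categories (\cite[\S A.3.2]{lurie}), following the proof of the previous theorem, \cite[3.11]{inftyn1}, essentially line by line. That machinery says: if a cartesian model category $\mathcal V$ is \emph{excellent} in Lurie's sense, then $\mathcal V$-$\Cat$ carries a cofibrantly generated model structure whose weak equivalences are exactly the enriched functors that are homotopically fully faithful and essentially surjective on the homotopy categories $\pi_0$ defined above. So there is really only one thing to prove: that $\Secat_c$ is excellent. The identification of the weak equivalences as Dwyer-Kan equivalences is then automatic, because the definition of $\pi_0\mathcal C$ recorded above --- with $\Hom_{\pi_0\mathcal C}(x,y) = \Hom_{\Ho(\Secat_c)}(\ast, \uMap_{\mathcal C}(x,y))$ --- is precisely the homotopy category that appears in Lurie's formulation; one need only observe that his theorem describes the weak equivalences of $\mathcal V$-$\Cat$ on the nose, not merely up to fibrant--cofibrant replacement.

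Most of the conditions defining an excellent model category are formal in this case. The category of Segal precategories is locally presentable, and $\Secat_c$, obtained from a Reedy-type model structure on it by left Bousfield localization at the Segal maps, is therefore combinatorial and left proper. It is cartesian and has all objects cofibrant (both stated in the theorem quoted earlier), so the cofibrations are generated by cofibrations between cofibrant objects, the monoidal unit $\Delta[0]$ is cofibrant, and the pushout-product axiom holds.

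The one genuinely substantive point is the \emph{invertibility hypothesis} of \cite{lurie}, which in particular subsumes the requirement that the weak equivalences of $\Secat_c$ be stable under filtered colimits. I would verify it by transcribing the argument given for $\css$ in \cite[\S 3]{inftyn1}: it uses only the cartesian structure together with the description of weak equivalences of Segal spaces via homotopy pullbacks of mapping spaces, and the analogous statements are available for Segal precategories. The additional work is bookkeeping forced by the discreteness of the degree-zero space, which makes the generating (acyclic) cofibrations of $\Secat_c$ more complicated than those of $\css$; but the cartesian product of Segal precategories is computed levelwise and keeps degree-zero spaces discrete, so the pushout-product and invertibility computations carry over with only notational changes. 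I expect this step to be the main --- indeed essentially the only --- obstacle: every other part of the argument is either formal or a verbatim appeal to \cite{lurie} or \cite{inftyn1}, whereas re-running the invertibility verification for $\Secat_c$ while tracking the discreteness constraint throughout is where the real content lies.
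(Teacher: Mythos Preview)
Your proposal is correct and matches the paper's approach exactly: the paper gives no separate proof for this theorem, merely remarking that ``essentially the same proof technique'' as for $\css$-$\Cat$ (i.e., \cite[3.11]{inftyn1}, which verifies Lurie's excellence axioms \cite[A.3.2]{lurie}) applies to $\Secat_c$. One small correction: the invertibility hypothesis does \emph{not} subsume stability of weak equivalences under filtered colimits --- these are separate axioms (A3) and (A4) in Lurie's definition of excellent model category, and both must be checked independently when you transcribe the argument.
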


The corresponding theorem for enriching in $\Qcat$ is implicit in \cite{lurie}; we give a brief sketch of the proof here.

\begin{theorem} 
There is a cofibrantly generated model structure on $\Qcat$-$\Cat$ in which the weak equivalences $f \colon \mathcal C \rightarrow \mathcal D$ are the Dwyer-Kan equivalences.
\end{theorem}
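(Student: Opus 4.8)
The plan is to obtain this model structure from the general machinery for enriched categories developed by Lurie in \cite[\S A.3.2]{lurie}, which is the strategy underlying the two preceding theorems as well. That machinery equips $\mathcal V$-$\Cat$ with a combinatorial model structure whose weak equivalences are the Dwyer--Kan equivalences whenever the model category $\mathcal V$ is \emph{excellent} in the sense of \cite{lurie}. So the work reduces to verifying that $\Qcat$ is excellent and then recording what the resulting model structure looks like.

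First I would run through the conditions in the definition of excellence. The Joyal model structure $\Qcat$ is combinatorial, with the boundary inclusions $\partial\Delta[n] \rightarrow \Delta[n]$ among its generating cofibrations; its cofibrations are precisely the monomorphisms, exactly as in the Quillen model structure $\SSets$; it is a cartesian model category, which is the content of the theorem above asserting that $\Qcat$ is cartesian; and its weak equivalences, the categorical equivalences, are closed under filtered colimits. Since any monomorphism out of $\varnothing$ is a cofibration, every object of $\Qcat$ is cofibrant, so $\Qcat$ is in particular left proper, which is convenient for the small-object and localization-type arguments in Lurie's construction.

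The remaining condition, and the only one that is not essentially formal, is the \emph{invertibility hypothesis} (see \cite[\S A.3.2]{lurie}): it governs the pushouts in $\Qcat$-$\Cat$ used to build the generating acyclic cofibrations that witness essential surjectivity, and it is where the genuine homotopy theory of quasi-categories enters. This is verified for the Joyal model structure in \cite{lurie}. Granting it, the construction of \cite[\S A.3.2]{lurie} produces a combinatorial, left proper model structure on $\Qcat$-$\Cat$ whose generating cofibrations are the inclusion of the empty $\Qcat$-category into the $\Qcat$-category on a single object, together with the maps $C[\partial\Delta[n]] \rightarrow C[\Delta[n]]$ for $n \geq 0$, where $C[K]$ denotes the $\Qcat$-category with two objects whose only non-trivial mapping object is $K$; its fibrant objects are the $\Qcat$-categories all of whose mapping objects are quasi-categories. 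Finally I would unwind the definitions to confirm that Lurie's notion of weak equivalence of $\Qcat$-categories agrees with the Dwyer--Kan equivalences defined above: being homotopically fully faithful is precisely the condition that $\uMap_{\mathcal C}(x,y) \rightarrow \uMap_{\mathcal D}(fx,fy)$ be a categorical equivalence for all $x,y$, and essential surjectivity is measured by $\pi_0$, which is computed through $\Hom_{\Ho(\Qcat)}(\ast,-)$ exactly as in the definition above.

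The one genuinely non-formal step, and the place where I expect any difficulty to lie, is the verification of the invertibility hypothesis for $\Qcat$; the remaining ingredients are either standard facts about the Joyal model structure or have been recorded in the preceding sections, and matching up the two notions of weak equivalence is a routine comparison of definitions. One could instead attempt to transport the model structure on $\css$-$\Cat$ along a change-of-enrichment adjunction built from the Joyal--Tierney Quillen equivalence \cite{jt} $i \colon \Qcat \rightleftarrows \css \colon \ev_0$, but arranging such a transfer is no more direct than invoking Lurie's construction.
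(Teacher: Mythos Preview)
Your approach is essentially the same as the paper's: both invoke Lurie's criteria from \cite[A.3.2.4]{lurie} and check the hypotheses needed for $\Qcat$ to be excellent. The one notable difference is in emphasis. You flag the invertibility hypothesis as the only genuinely non-formal step and treat closure of categorical equivalences under filtered colimits as a standard fact; the paper takes the opposite view, asserting that the filtered-colimit condition is ``the only condition that is not straightforward to check'' and citing \cite[2.13]{dugspims} for it. Both conditions deserve a reference---neither is entirely trivial---so your sketch would be strengthened by pointing to Dugger--Spivak for the filtered-colimit statement rather than asserting it without comment. Otherwise the argument is the same.
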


\begin{proof}
We apply the criteria of \cite[A.3.2.4]{lurie}.  The only condition that is not straightforward to check is that weak equivalences in $\Qcat$ preserve filtered colimits.  However, this result is proved in \cite[2.13]{dugspims}.
\end{proof}

Given these model structures, we would like to know that they are all equivalent to one another, since we are enriching in Quillen equivalent model categories. The following result is a consequence of \cite[A.3.2.6]{lurie}.

\begin{theorem}
There are Quillen equivalences
\[ \xymatrix{& \Qcat-\Cat \ar[dl] \ar[dr] & \\
\Secat_c-\Cat \ar[rr] \ar[ur] && \css-\Cat. \ar[ul] \ar[ll]}\]
\end{theorem}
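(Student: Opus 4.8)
The plan is to deduce this from the general change-of-enriching-category result \cite[A.3.2.6]{lurie}: if $\mathcal V$ and $\mathcal V'$ are ``excellent'' monoidal model categories and $F \colon \mathcal V \to \mathcal V'$ is a strong monoidal left Quillen functor which is also a Quillen equivalence, then the induced adjunction between $\mathcal V$-$\Cat$ and $\mathcal V'$-$\Cat$---whose right adjoint applies the right adjoint of $F$ to all mapping objects---is again a Quillen equivalence. (The relevant enriched model structures are exactly those of the three preceding theorems.) I would apply this to the monoidal Quillen equivalences between the base model categories already recorded above: the Joyal--Tierney equivalence $i \colon \Qcat \rightleftarrows \css \colon \ev_0$, the Joyal--Tierney equivalence between $\Qcat$ and $\Secat_c$ from \cite{jt}, and the equivalence $I \colon \Secat_c \rightleftarrows \css \colon D$ from \cite{thesis}. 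Since \cite{jt} produces comparisons in both directions (and the remaining reverse arrows of the triangle can then be filled in by composing around it), this supplies the left Quillen functors needed to realize each arrow drawn.

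The first step is to confirm the hypotheses of \cite[A.3.2.6]{lurie}. Each of $\Qcat$, $\css$, and $\Secat_c$ is a combinatorial, left proper, cartesian model category in which every object is cofibrant and in which filtered colimits preserve weak equivalences---for $\Qcat$ this is \cite[2.13]{dugspims}, invoked just above, while for the two simplicial-space models it is immediate from the levelwise description of the weak equivalences---and in each case the ``invertibility hypothesis'' holds; this is precisely what is needed for excellence. The second step is to check that the comparison functors are strong monoidal for the respective cartesian products. This is a short computation: $i \colon \SSets \to \SSets^{\Deltaop}$ and $\ev_0$ preserve finite products because products of simplicial spaces are computed levelwise; the inclusion $I \colon \SSets^{\Deltaop}_{\disc} \to \SSets^{\Deltaop}$ preserves products because a finite product of simplicial spaces discrete in degree zero is again discrete in degree zero; and the Joyal--Tierney functors relating $\Qcat$ and $\Secat_c$ are built from product-preserving operations. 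One checks along the way that the right adjoints $\ev_0$ and $D$ are strong monoidal as well---$D$ because $\pi_0$ commutes with finite products---which is what makes the induced pushforward on enriched categories defined on the nose.

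Given these two verifications, \cite[A.3.2.6]{lurie} applies to each base equivalence and yields the three Quillen equivalences asserted by the theorem. It then remains only to observe that the triangle is coherent: a composite of Quillen equivalences is again a Quillen equivalence and Quillen equivalences satisfy two-out-of-three, so the induced equivalences agree up to the evident homotopical identifications, and any one side of the triangle could be recovered from the other two. The genuinely substantive work is confined to the first two steps---verifying that $\Qcat$, $\css$, and $\Secat_c$ meet the somewhat demanding definition of an excellent model category, and that the comparison functors are monoidal---and I expect the invertibility hypothesis for $\Qcat$ to be the only point requiring real care; once that bookkeeping is complete, the conclusion is formal.
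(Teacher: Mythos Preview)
Your proposal is correct and follows the same route as the paper: the paper itself offers no argument beyond declaring the theorem ``a consequence of \cite[A.3.2.6]{lurie}'', and you invoke exactly that result, supplying in addition an outline of the hypotheses (excellence of the three base model categories, strong monoidality of the comparison left adjoints) that must be verified. One small divergence: the paper, in its proof of the existence of the $\Qcat$-$\Cat$ model structure just above, singles out stability of weak equivalences under filtered colimits as the nonobvious condition (citing \cite[2.13]{dugspims}), whereas you flag the invertibility hypothesis as the delicate point; both are genuine items to check, but the paper's emphasis is on the former.
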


However, if we want to move to higher $(\infty,n)$-categories by iterating this process, we have the same trouble again, as the model categories we have given here are not cartesian. 

Furthermore, if we want to accommodate more flexible examples, we want to have models with less rigid composition structure.  There are a number of approaches which generalize the models for $(\infty,1)$-categories in different ways.  In the next few sections, we look at some of the possibilities, starting with generalizations of complete Segal spaces and Segal categories.

\section{Multisimplicial models for $(\infty,2)$-categories}

When trying to move to $(\infty,2)$-categories, the first thought one might have is to add a higher categorical level by adding another simplicial level.  We see this kind of intuition when we generalize from the nerves of categories (simplicial sets) to simplicial nerves of simplicial categories (bisimplicial sets).  

When we consider Segal categories and complete Segal spaces, our starting point is the notion of Segal space.  Thus, it is natural to start with a structure that satisfies Segal conditions in two simplicial directions.  There are different ways to describe such a structure, but we begin with the following description.

\begin{definition}
A Reedy fibrant functor $W \colon \Deltaop \times \Deltaop \rightarrow \SSets$ is a \emph{double Segal space} if the Segal maps
\[ W_{k,*} \rightarrow \underbrace{W_{1,*} \times_{W_{0,*}} \cdots \times_{W_{0,*}} W_{1,*}}_k \]
and
\[ W_{*,k} \rightarrow \underbrace{W_{*,1} \times_{W_{*,0}} \cdots \times_{W_{*,0}} W_{*,1}}_k \]
are weak equivalences for any $k \geq 2$.  In other words, the simplicial spaces $W_{k,*}$ and $W_{k,*}$ are Segal spaces for any fixed $k$.  
\end{definition}

The definition we have given treats the two simplicial directions equally, but it can be convenient to distinguish the two in the following way.

\begin{definition} \label{segalobj}
A Reedy fibrant functor $W \colon \Deltaop \rightarrow \SSets^{\Deltaop}$ is a \emph{Segal object in Segal spaces} if the maps
\[ W_k \rightarrow \underbrace{W_1 \times_{W_0} \cdots \times_{W_0} W_1}_k \]
are weak equivalences in $\sesp$ for all $k \geq 0$.
\end{definition}

Indeed, one can check that these two definitions agree.

\begin{prop}
A bisimplicial space is a double Segal space if and only if it is a Segal object in Segal spaces.
\end{prop}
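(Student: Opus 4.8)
The plan is to unwind both definitions in terms of the underlying functor $W \colon \Deltaop \times \Deltaop \rightarrow \SSets$ and observe that each amounts to a conjunction of two families of conditions: the Segal condition in the first simplicial direction, the Segal condition in the second, plus Reedy fibrancy, and then to check that these conditions match up. The only subtlety is keeping careful track of what ``Reedy fibrant'' means in the two formulations (a functor $\Deltaop \times \Deltaop \rightarrow \SSets$ versus a functor $\Deltaop \rightarrow \SSets^{\Deltaop}$ valued in Segal spaces with the levelwise/Reedy structure on $\sesp$), and of the fact that in Definition \ref{segalobj} the index runs over all $k \geq 0$ while in the double Segal space definition it runs over $k \geq 2$.

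First I would record that a functor $W \colon \Deltaop \rightarrow \SSets^{\Deltaop}$ is the same datum as a bisimplicial space, and that $W$ is Reedy fibrant as such a functor (for the Reedy structure on $\sesp$, which on the excerpt's conventions coincides with the injective one) if and only if the corresponding bisimplicial space is Reedy fibrant in the usual sense; this is standard and may be cited from \cite{reedy} or \cite[15.3]{hirsch}. In particular, under either hypothesis all objects in sight are Reedy fibrant, so the strict pullbacks appearing in the Segal maps compute homotopy pullbacks and all comparisons are homotopy invariant.

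Next I would handle the two implications. For the forward direction, assume $W$ is a double Segal space. The condition that $W_{*,k}$ is a Segal space for each $k$ is exactly the statement that the second family of Segal maps in the double Segal space definition are weak equivalences; combined with Reedy fibrancy this says precisely that each $W_k \in \SSets^{\Deltaop}$, regarded as an object of the Segal space model structure, is fibrant, i.e.\ is a Segal space. It remains to see that the maps $W_k \rightarrow W_1 \times_{W_0} \cdots \times_{W_0} W_1$ are weak equivalences in $\sesp$ for all $k \geq 0$. Weak equivalences in $\sesp$ between Segal spaces are detected levelwise (the localization only adds weak equivalences, and between fibrant objects one can test on the underlying Reedy weak equivalences), so this reduces to checking that for each fixed $m$ the map $W_{k,m} \rightarrow W_{1,m} \times_{W_{0,m}} \cdots \times_{W_{0,m}} W_{1,m}$ is a weak equivalence of simplicial sets — and since the pullbacks are homotopy pullbacks, this is exactly the first family of Segal maps of the double Segal space being weak equivalences. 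The cases $k=0,1$ are automatic since the Segal map is then an identity. The reverse direction is the same chain of equivalences read backwards: if $W$ is a Segal object in Segal spaces, then each $W_k$ is a Segal space, giving the second family of Segal conditions, and the $\sesp$-weak equivalences $W_k \rightarrow W_1 \times_{W_0}\cdots\times_{W_0} W_1$ unwind levelwise to the first family.

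The main obstacle — really the only nonroutine point — is the claim that a weak equivalence in $\sesp$ between Segal spaces is a levelwise weak equivalence, and conversely that a levelwise weak equivalence of Reedy fibrant bisimplicial spaces is detected on each $W_{k,*}$ after one further application of the Segal space localization; both follow from the general fact that in a left Bousfield localization a map between fibrant objects is a weak equivalence iff it is one in the original model structure, applied to $\sesp$ over the Reedy model structure on $\SSets^{\Deltaop}$. I would state this as the one lemma to invoke and otherwise present the argument as the bookkeeping above. An alternative, if one prefers to avoid even that, is to phrase everything in terms of homotopy pullbacks and the $3 \times 3$-type interchange of iterated (homotopy) limits, since $W_k \simeq (W_1 \times^h_{W_0} \cdots)$ in $\sesp$ is equivalent to the corresponding statement holding after evaluation at every $m$, by definition of weak equivalences in a levelwise-or-localized model structure on diagrams; I would mention this but not expand it.
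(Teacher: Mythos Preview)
The paper does not actually prove this proposition; it just records that ``one can check that these two definitions agree'' and states the result. Your proposal supplies the expected unpacking and is essentially correct, with the right key lemma: in the left Bousfield localization $\sesp$, a map between fibrant objects (Segal spaces) is a weak equivalence if and only if it is one in the underlying Reedy model structure, i.e.\ levelwise.

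Two small slips are worth fixing. First, where you write ``the condition that $W_{*,k}$ is a Segal space for each $k$'' you mean $W_{k,*}$: with $W_k = W_{k,*}$, the \emph{second} family of Segal maps (those in the second index) is exactly what makes each $W_{k,*}$ a Segal space, while the \emph{first} family is what you later unwind levelwise as $W_{k,m} \to W_{1,m}\times_{W_{0,m}}\cdots$. Second, your parenthetical about Reedy fibrancy is not quite right as stated: Reedy fibrancy of $W \colon \Deltaop \to \sesp$ (with the localized model structure on the target) is strictly stronger than Reedy fibrancy of the underlying bisimplicial space, because fibrations in $\sesp$ are a proper subclass of Reedy fibrations. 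The correct statement is that $W$ is Reedy fibrant over $\sesp$ if and only if it is Reedy fibrant as a bisimplicial space \emph{and} each $W_k$ is a Segal space; this follows since matching objects are limits of fibrant objects, hence fibrant, and Reedy fibrations between Segal spaces are $\sesp$-fibrations. Neither issue affects the logic of your argument.
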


The following model structure appears in \cite{boors3}, although it was almost certainly known to experts previously.

\begin{theorem} 
There is a model structure on the category of bisimplicial spaces in which the fibrant objects are precisely the double Segal spaces.
\end{theorem}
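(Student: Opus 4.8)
The plan is to obtain this model structure as a left Bousfield localization of a Reedy model structure, in direct analogy with Rezk's construction of $\sesp$ from the Reedy model structure on simplicial spaces, only carried out in both simplicial directions at once.

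\emph{Setup.} First I would equip the category of bisimplicial spaces, regarded as functors $\Deltaop \times \Deltaop \to \SSets$ out of the product Reedy category, with the Reedy model structure. Since $\SSets$ is left proper and combinatorial and has all objects cofibrant, the same holds for this Reedy model structure, and its fibrant objects are the Reedy fibrant bisimplicial spaces. The feature of Reedy fibrancy that is essential here is that it makes the strict fibre products appearing in the Segal maps into homotopy pullbacks: for a Reedy fibrant $W$ the face maps $W_{1,q} \to W_{0,q}$ of simplicial sets are Kan fibrations (just as in Rezk's treatment of Segal spaces, Reedy fibrancy forces the map $W_{1,q} \to W_{0,q} \times W_{0,q}$ induced by the two faces to be a fibration, and composing with a projection gives the claim), so the iterated fibre products $W_{1,q} \times_{W_{0,q}} \cdots \times_{W_{0,q}} W_{1,q}$, and likewise their counterparts in the other variable, compute homotopy limits.

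\emph{Localization.} Next I would localize at the set $S = S_{(1)} \cup S_{(2)}$, with $S_{(1)}$ imposing the Segal condition in the first simplicial direction and $S_{(2)}$ in the second. Writing $\boxtimes$ for the external product of simplicial sets, regarded as a discrete bisimplicial space, and $G(k) \subseteq \Delta[k]$ for the Segal core, take $S_{(1)} = \{ G(k) \boxtimes \Delta[q] \to \Delta[k] \boxtimes \Delta[q] : k \geq 2,\ q \geq 0 \}$ and $S_{(2)} = \{ \Delta[p] \boxtimes G(k) \to \Delta[p] \boxtimes \Delta[k] : k \geq 2,\ p \geq 0 \}$; each family is the Segal core inclusion $G(k)^t \to \Delta[k]^t$ spread out over the simplices of the complementary simplicial direction. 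Since the Reedy model structure is sufficiently nice and $S$ is a set, the localized model structure exists by the localization theorem quoted above, with unchanged cofibrations and with the $S$-local objects as its fibrant objects.

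\emph{Identification of the fibrant objects, and the main obstacle.} Every object occurring in $S$ is cofibrant, so for a Reedy fibrant $W$ the relevant homotopy function complexes are computed strictly: $\Map^h(\Delta[k] \boxtimes \Delta[q], W) \cong W_{k,q}$ and $\Map^h(G(k) \boxtimes \Delta[q], W) \cong W_{1,q} \times_{W_{0,q}} \cdots \times_{W_{0,q}} W_{1,q}$. Hence such a $W$ is $S_{(1)}$-local exactly when, for all $k \geq 2$ and all $q$, the map $W_{k,q} \to W_{1,q} \times_{W_{0,q}} \cdots \times_{W_{0,q}} W_{1,q}$ is a weak equivalence of simplicial sets, i.e.\ exactly when the first Segal map $W_{k,*} \to W_{1,*} \times_{W_{0,*}} \cdots \times_{W_{0,*}} W_{1,*}$ is a levelwise weak equivalence of simplicial spaces, and symmetrically for $S_{(2)}$; so a Reedy fibrant bisimplicial space is $S$-local if and only if both families of Segal maps are weak equivalences for all $k \geq 2$, that is, if and only if it is a double Segal space. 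The only real work lies in this identification, and within it in the point used above that Reedy fibrancy converts the defining strict pullbacks of a double Segal space into homotopy limits, so that $S$-locality and the Segal conditions genuinely coincide; this is precisely the mechanism in Rezk's analysis of $\sesp$, and here one only needs to check that carrying along an inert extra simplicial index does not disturb the argument. As a consistency check one can instead localize in two stages — first imposing the Segal condition in one direction, which lands one in the model structure of Reedy fibrant diagrams valued in Segal spaces, and then in the other — and invoke the proposition identifying double Segal spaces with Segal objects in Segal spaces; this produces the same model structure, since being $S$-local is the same as being local for $S_{(1)}$ and for $S_{(2)}$ separately.
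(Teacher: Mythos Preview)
Your argument is correct. The paper itself does not prove this theorem; it simply attributes the model structure to \cite{boors3}, noting that it ``was almost certainly known to experts previously.'' Your construction --- left Bousfield localization of the Reedy model structure on $\SSets^{\Deltaop \times \Deltaop}$ at the Segal core inclusions in each simplicial direction, indexed over all simplices of the complementary variable --- is the expected argument and is exactly Rezk's construction of $\sesp$ run twice, so there is no alternative proof in the paper to compare against. The one point you flag as needing care, that Reedy fibrancy for the product Reedy category $\Delta \times \Delta$ restricts to Reedy fibrancy in each variable separately (so that the iterated fibre products are homotopy pullbacks and $S$-locality coincides with the Segal conditions), is indeed the only substantive verification, and your treatment of it is fine.
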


Just as Segal spaces do not quite model $(\infty,1)$-categories, we do not expect double Segal spaces to model $(\infty,2)$-categories.  Recall that the difference for Segal spaces was that they give a model for categories internal to spaces, rather than categories enriched in spaces.  In other words, we have a space of objects as well as of morphisms.  The problem here is similar.  A double Segal space encodes the information of a homotopical double category.  A \emph{double category} is a category internal to categories, and as such has objects, horizontal morphisms, vertical morphisms, and squares.  

More precisely, suppose that $W$ is a double Segal space.  We can think of $X_{0,0}$ as the space of objects, $W_{0,1}$ as the space of vertical morphisms, $W_{1,0}$ as the space of horizontal morphisms, and $W_{1,1}$ as the space of squares.  In other words, the 2-morphisms are encoded in squares like the following:
\[ \xymatrix{\bullet \ar[r] \ar[d] & \bullet \ar[d] \\
\bullet \ar[r] & \bullet.} \]
However, when we look at $(\infty,2)$-categories, we typically want to think of 2-morphisms as being of the form
\[\xymatrix{{\bullet} \ar@/^1pc/[r]^{}="30" \ar@/_1pc/[r]^{}="31" \ar@{=>}"30";"31"
& {\bullet.}
}\]
In particular, do not want to have nontrivial vertical morphisms, but only horizontal ones, and thus rather than configurations of squares we want what are often called ``globular" diagrams.  To model such a structure without interesting vertical morphisms, we want to ask that the simplicial space $W_{0,*}$ be essentially constant.  As in the case of $(\infty,1)$-categories, we also want to ask either that $W_{0,0}$, the space of objects, be discrete, or to have a corresponding completeness condition.  Additionally, since we want to think of $(\infty,2)$-categories as enriched in $(\infty,1)$-categories, we want the same kind of condition on the 1-morphisms: either that the space of such be discrete, or that we have a corresponding completeness condition.  The focus of this section, then, is to describe how to impose these kinds of conditions appropriately.

Let us first consider the option of imposing two completeness conditions.  As a first step, let us define Segal objects in complete Segal spaces, building on Definition \ref{segalobj}.

\begin{definition}
A Reedy fibrant functor $W \colon \Deltaop \rightarrow \SSets^{\Deltaop}$ is a \emph{Segal object in complete Segal spaces} if, for any $k \geq 2$, the Segal map
\[ W_k \rightarrow \underbrace{W_1 \times_{W_0} \cdots \times_{W_0} W_1}_k \]
is a weak equivalence in the complete Segal space model structure $\css$.
\end{definition}

This definition, as we have given it, is quite formal, so let us investigate the structure further.  We first make the following definition of mapping objects, generalizing mapping spaces in a Segal space.

\begin{definition}
Let $W$ be a double Segal space.  Then for any $x,y \in W_{0,0,0}$, the \emph{mapping object} $\umap_W(x,y)$ is defined to be the simplicial space defined as the pullback
\begin{equation}  \label{mappingobj}
\xymatrix{\umap_W(x,y) \ar[r] \ar[d] & W_{1,*} \ar[d]^{(d_1, d_0)} \\
\{(x,y)\} \ar[r] & W_{0,*} \times W_{0,*}. } 
\end{equation}
\end{definition}

As for mapping spaces in Segal spaces, the fact that $W$ is Reedy fibrant implies that this pullback is actually a homotopy pullback.  We can use these mapping objects in the following alternative characterization of Segal objects in complete Segal spaces.

\begin{prop}
A Reedy fibrant functor $W \colon \Deltaop \times \Deltaop \rightarrow \SSets$ is a Segal object in complete Segal spaces if and only if it satisfies the following conditions:
\begin{enumerate}
\item \label{cssobj1}
for any $m \geq 2$, the Segal map
\[ W_{m,*} \rightarrow \underbrace{W_{1,*} \times_{W_{*,0}} \cdots \times_{W_{*,0}} W_{1,*}}_m \]
is a weak equivalence in $\css$; and

\item \label{cssobj2}
for every $x, y \in W_{0,0,0}$, $\umap_W(x,y)$ is a complete Segal space.
\end{enumerate}
\end{prop}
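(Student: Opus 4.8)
The plan is to prove the two implications separately, unwinding the definition of a Segal object in complete Segal spaces in the two simplicial directions. Three facts will be used repeatedly. First, $\css$ is a left Bousfield localization of the Reedy model structure on simplicial spaces, so a map between complete Segal spaces is a weak equivalence, respectively a fibration, in $\css$ precisely when it is a Reedy weak equivalence, respectively a Reedy fibration; in particular every Reedy fibration between complete Segal spaces is a fibration in $\css$. Second, $\css$ is cartesian and its fibrations are stable under pullback, so complete Segal spaces are closed under finite products and under homotopy pullbacks formed along fibrations. Third, the Reedy fibrancy of $W$ makes $(d_1,d_0)\colon W_{1,*}\to W_{0,*}\times W_{0,*}$ a Reedy fibration and makes the strict pullback \eqref{mappingobj} and the strict fiber products defining the Segal maps into homotopy pullbacks, so we may work with strict (co)limits throughout.

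For the forward direction, suppose $W$ is a Segal object in complete Segal spaces. Viewing $W$ as a functor $\Deltaop\to\css$, it is Reedy fibrant — so each $W_{k,*}$ is a complete Segal space — with Segal maps that are weak equivalences in $\css$; condition \eqref{cssobj1} is then a restatement of this. For condition \eqref{cssobj2}, fix $x,y\in W_{0,0,0}$. By \eqref{mappingobj}, $\umap_W(x,y)$ is the pullback of $\ast\to W_{0,*}\times W_{0,*}\xleftarrow{(d_1,d_0)}W_{1,*}$, the left map picking out the vertex $(x,y)$. Since $W_{1,*}$ and $W_{0,*}$, hence $W_{0,*}\times W_{0,*}$, are complete Segal spaces, the Reedy fibration $(d_1,d_0)$ is a fibration in $\css$, so its pullback $\umap_W(x,y)\to\ast$ is again a fibration in $\css$; thus $\umap_W(x,y)$ is $\css$-fibrant, i.e. a complete Segal space, which is \eqref{cssobj2}.

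For the converse, suppose $W$ is a Reedy fibrant bisimplicial space satisfying \eqref{cssobj1} and \eqref{cssobj2}. We must show that $W$, viewed as $\Deltaop\to\css$, is Reedy fibrant with Segal maps that are weak equivalences in $\css$; the latter is \eqref{cssobj1}. As $W$ is already Reedy fibrant as a bisimplicial space, each matching map $W_{k,*}\to M_kW$ is a Reedy fibration, and each matching object $M_kW$ is a finite homotopy limit of the $W_{j,*}$ with $j<k$; since complete Segal spaces are closed under such homotopy limits, it is enough to prove that every $W_{k,*}$ is a complete Segal space. For $k\ge 2$, \eqref{cssobj1} identifies $W_{k,*}$, up to weak equivalence in $\css$, with the homotopy fiber product $W_{1,*}\times_{W_{0,*}}\cdots\times_{W_{0,*}}W_{1,*}$, which is a complete Segal space once $W_{0,*}$ and $W_{1,*}$ are; so the problem reduces to $k=0,1$. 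Granting that $W_{0,*}$ is a complete Segal space, the case $k=1$ is handled by reversing the forward argument: $W_{1,*}$ is the total space of the Reedy fibration $(d_1,d_0)\colon W_{1,*}\to W_{0,*}\times W_{0,*}$ whose base is a complete Segal space and all of whose homotopy fibers $\umap_W(x,y)$ over vertices $(x,y)$ are complete Segal spaces by \eqref{cssobj2}, and one must conclude that $W_{1,*}$ is therefore a complete Segal space by checking the Segal and completeness conditions fiberwise over $W_{0,*}\times W_{0,*}$.

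The hard part will be this last step — showing that the total space of a Reedy fibration is a complete Segal space whenever its base and all of its homotopy fibers over vertices of the base are. One must recover both the Segal condition and the completeness condition for the total space from those on the base and on the fibers, which means commuting the pertinent homotopy limits, namely the fiber products appearing in the Segal maps and the passage to the space of homotopy equivalences, past the homotopy fiber sequences relating total space, base, and fiber, keeping everything strict via Reedy fibrancy. The auxiliary point that $W_{0,*}$ is itself a complete Segal space should likewise be extracted from the standing hypotheses on $W$, and I would want to pin down precisely which hypothesis supplies it. Everything else — the forward direction, condition \eqref{cssobj1}, and the reduction for $k\ge 2$ — is essentially formal.
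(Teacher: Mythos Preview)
Your forward direction (Segal object in $\css$ $\Rightarrow$ conditions \eqref{cssobj1} and \eqref{cssobj2}) is correct and is essentially the paper's argument verbatim: Reedy fibrancy over $\css$ makes each $W_{k,*}$ a complete Segal space, the map $(d_1,d_0)$ a fibration in $\css$, and hence the pullback defining $\umap_W(x,y)$ fibrant in $\css$.

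Where you diverge from the paper is in the converse, and you have inverted the difficulty. The paper disposes of the direction \eqref{cssobj1}$+$\eqref{cssobj2} $\Rightarrow$ Segal object in a single sentence (``not hard to check''), and the reason is visible in the paper's own proof of the other implication: it freely uses that $W_{0,*}$ and $W_{1,*}$ are complete Segal spaces as a consequence of the standing Reedy fibrancy hypothesis. In other words, the paper reads ``Reedy fibrant'' in the definition of a Segal object in complete Segal spaces as Reedy fibrant \emph{as a simplicial object in $\css$}, so that every $W_{k,*}$ is already fibrant in $\css$. Under that reading, condition \eqref{cssobj1} is literally the Segal condition in the definition and there is nothing left to prove; condition \eqref{cssobj2} plays no role in this direction.

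You instead took ``Reedy fibrant'' in the proposition to mean only Reedy fibrant as a bisimplicial space, and then tried to \emph{deduce} that each $W_{k,*}$ is a complete Segal space from \eqref{cssobj1} and \eqref{cssobj2}. That is a strictly stronger statement than what the paper is asserting, and your own write-up correctly flags the two places where it breaks down. First, nothing in \eqref{cssobj1} or \eqref{cssobj2} constrains $W_{0,*}$: the Segal maps in \eqref{cssobj1} only involve $m\ge 2$, and the mapping objects in \eqref{cssobj2} are fibers of $W_{1,*}$ over \emph{points} of $W_{0,*}\times W_{0,*}$, so there is no mechanism here forcing $W_{0,*}$ to satisfy either the Segal or the completeness condition. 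Second, the step ``total space of a Reedy fibration with complete Segal base and complete Segal vertex-fibers is a complete Segal space'' is a nontrivial lemma you have not proved; commuting the completeness condition past the fiber sequence is exactly the delicate point, and you leave it as a sketch. So under your weaker hypothesis the converse, as you have set it up, has a genuine gap at $k=0$ and an unfinished argument at $k=1$. The remedy is simply to adopt the paper's intended reading of Reedy fibrancy, after which this direction collapses to one line.
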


\begin{proof}
It is not hard to check that a bisimplicial space satisfying these two conditions is a Segal object in complete Segal spaces.

Conversely, suppose that $W$ is a Segal object in complete Segal spaces.  Then \eqref{cssobj1} holds, since these maps are assumed to be weak equivalences of complete Segal spaces.  

To check \eqref{cssobj2}, consider $\umap_W(x,y)$ for fixed $x,y \in W_{0,0,0}$.  Since $W$ is assumed to be Reedy fibrant, the right vertical map in \eqref{mappingobj} is a fibration between complete Segal spaces, which are the fibrant objects in $\css$.  Since the discrete object $\{(x,y)\}$ is also a fibrant object in $\css$, the pullback must be as well.  It follows that $\umap_W(x,y)$ is fibrant, namely, a complete Segal space.
\end{proof}

We use the approach of this second characterization to define complete Segal objects.

\begin{definition} \label{2-foldcss}
A Reedy fibrant functor $W \colon \Deltaop \times \Deltaop \rightarrow \SSets$ is a \emph{complete Segal object in complete Segal spaces}, or a \emph{2-fold complete Segal space} if:
\begin{enumerate}
\item \label{2foldcss1}
for every $k \geq 0$, the simplicial space $W_{*,k}$ is a complete Segal space;

\item \label{2foldcss2}
for every $x,y \in W_{0,0,0}$, the simplicial space $\umap_W(x,y)$ is a complete Segal space; and

\item \label{2foldcss3}
the simplicial space $W_{0,*}$ is essentially constant.
\end{enumerate}
\end{definition}

\begin{remark}
	The definition of 2-fold complete Segal space is often stated with \eqref{2foldcss1} replaced by the the seemingly weaker condition that each $W_{*,k}$ be a Segal space with only $W_{*,0}$ assumed to be complete.  However, as proved by Johnson-Freyd and Scheimbauer \cite[2.8]{jfs}, these conditions actually imply condition \eqref{2foldcss1} as we have stated it.
\end{remark}

The following model structure, like the one for complete Segal spaces, is obtained via localization of the Reedy model structure on bisimplicial spaces.

\begin{theorem} \cite{inftyn2}
There is a model structure $\css(\css)$ on the category of bisimplical spaces in which the fibrant objects are precisely the 2-fold complete Segal spaces.
\end{theorem}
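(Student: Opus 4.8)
The plan is to obtain $\css(\css)$ as a localization of the Reedy model structure on the category $\SSets^{\Deltaop\times\Deltaop}$ of bisimplicial spaces, in exact parallel with the way $\sesp$ and $\css$ arise from the Reedy model structure on simplicial spaces. The first step is to record that this Reedy model structure is left proper and cellular --- being a diagram category valued in $\SSets$, which is itself left proper and cellular, and Reedy model structures inherit both of these properties --- so that the localization theorem \cite[4.1.1]{hirsch} applies to \emph{any} set of maps. It then suffices to produce a set $S$ of maps of bisimplicial spaces whose $S$-local objects are exactly the $2$-fold complete Segal spaces of Definition \ref{2-foldcss}.

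For $S$ I would take the union of four families of maps, one for each clause of Definition \ref{2-foldcss}, built from the representable bisimplicial spaces $F(p,q)$, the discretized simplices $G(m)^t \subseteq \Delta[m]^t$ of the excerpt, and the interval $E$, the nerve of the free-standing isomorphism. \emph{(i)} For $m \geq 2$ and $q \geq 0$, the first-direction Segal maps $(G(m)\times\Delta[q])^t \hookrightarrow (\Delta[m]\times\Delta[q])^t$, so that locality forces the maps $W_{m,*} \to W_{1,*}\times_{W_{0,*}}\cdots\times_{W_{0,*}}W_{1,*}$ of \eqref{cssobj1} to be weak equivalences. \emph{(ii)} For $q \geq 0$, Rezk's completeness map $\Delta[0]\to E$ taken in the first simplicial direction at second index $q$, forcing each $W_{*,q}$ to be complete; this is condition \eqref{2foldcss1}, and by the result of Johnson-Freyd and Scheimbauer \cite[2.8]{jfs} one could in fact include only $q=0$ and recover the rest. \emph{(iii)} For $n \geq 1$, the maps $F(0,n)\to F(0,0)$ of representables induced by $[n]\to[0]$ in the second simplicial direction, whose locality says exactly that the degeneracies $W_{0,0}\to W_{0,n}$ are weak equivalences, i.e.\ that $W_{0,*}$ is essentially constant (condition \eqref{2foldcss3}). \emph{(iv)} The pushout-product of the boundary inclusion $\partial\Delta[1]^t \hookrightarrow \Delta[1]^t$ in the first simplicial direction with the completeness map $\Delta[0]\to E$ in the second; after unwinding the pullback square \eqref{mappingobj}, locality against this single map says that each mapping object $\umap_W(x,y)$ is complete (condition \eqref{2foldcss2}).

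With such an $S$ fixed, \cite[4.1.1]{hirsch} immediately produces a model structure $\css(\css)$ on $\SSets^{\Deltaop\times\Deltaop}$ with the Reedy cofibrations and with the $S$-local objects as fibrant objects, so the theorem reduces to identifying these local objects with the $2$-fold complete Segal spaces. One direction is essentially formal: an $S$-local object is Reedy fibrant, and, family by family, locality translates into the four clauses of Definition \ref{2-foldcss} --- here one uses, just as in the proof above of the characterization of Segal objects in complete Segal spaces, that Reedy fibrancy turns the strict pullback \eqref{mappingobj} into a homotopy pullback. The step I expect to be the main obstacle is the converse for family \emph{(iv)}: the requirement ``$\umap_W(x,y)$ is complete for all $x,y$'' is phrased fibrewise over $W_{0,0}\times W_{0,0}$, and one has to verify that this fibrewise condition is genuinely detected by a right lifting property against the one map in \emph{(iv)} --- that fibrewise completeness assembles into a global statement. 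This is the same mechanism by which Rezk replaces ``all mapping spaces are complete'' by locality against a single completeness map in \cite{rezk}, and it should go through by the same argument, using that the source-and-target map out of $W_{1,*}$ is a fibration and that $\Delta[0]\to E$ is a cofibration between cofibrant objects with $E$ contractible. Everything else then reduces to routine localization bookkeeping together with the computations already carried out for Segal objects in complete Segal spaces.
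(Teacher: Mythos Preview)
Your overall strategy---build $\css(\css)$ as a left Bousfield localization of the Reedy model structure on bisimplicial spaces---is exactly what the paper indicates (``obtained via localization of the Reedy model structure on bisimplicial spaces'') and what is carried out in \cite{inftyn2}.  The paper itself gives no further argument, so there is nothing more detailed to compare against.

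There is, however, a genuine gap in your proposed localizing set $S$.  Condition \eqref{2foldcss2} of Definition \ref{2-foldcss} asks that each $\umap_W(x,y)$ be a \emph{complete Segal space}, which has two parts: the Segal condition and the completeness condition.  Your family \emph{(iv)}---the pushout-product of $\partial\Delta[1]^t\hookrightarrow\Delta[1]^t$ with $\Delta[0]\to E$---only targets the completeness half, as you yourself note.  Nothing in \emph{(i)}--\emph{(iii)} constrains the second-direction simplicial structure of $W_{1,*}$: family \emph{(i)} imposes Segal maps in the \emph{first} direction only, \emph{(ii)} is first-direction completeness, and \emph{(iii)} concerns $W_{0,*}$.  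So an $S$-local $W$ could have $W_{1,*}$ an arbitrary Reedy fibrant simplicial space (subject to the constraints coming from $W_{0,*}$ and first-direction conditions), and its fibers $\umap_W(x,y)$ need not satisfy any Segal condition.  Concretely: take $W_{0,*}$ constant at a point and $W_{m,*}=A^{\times m}$ for a simplicial monoid $A$ whose underlying simplicial space is not Segal; with mild hypotheses on $A$ this is local against your \emph{(i)}--\emph{(iv)} but fails \eqref{2foldcss2}.

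The fix is to enlarge $S$ to include second-direction Segal information.  The cleanest choice, and the one implicit in \cite{inftyn2}, is to add for every $m\geq 0$ and $n\geq 2$ the maps $\Delta[m]^t \times \bigl(G(n)^t \hookrightarrow \Delta[n]^t\bigr)$ (external product, second factor in the second direction), so that locality forces each $W_{m,*}$ to be a Segal space; then $\umap_W(x,y)$, as a fiber of a Reedy fibration between Segal spaces over an essentially constant base, is itself a Segal space.  Equivalently you could add the pushout-products of $\partial\Delta[1]^t\hookrightarrow\Delta[1]^t$ with the second-direction Segal maps $G(n)^t\hookrightarrow\Delta[n]^t$, in parallel with your \emph{(iv)}.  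With that addition your argument goes through as written.
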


Because they play such a critical role in the theory of complete Segal spaces and their comparison with other models, and are used to define the enriched category models for $(\infty,2)$-categories, let us look at Dwyer-Kan equivalences in this setting.  We have defined mapping objects, so we can define homotopical fully faithfulness, but for essential surjectivity we need a notion of homotopy category of a 2-fold complete Segal space.  Since one might think more naturally of a homotopy 2-category in this context, we need to reduce a categorical level, which we do by defining the underlying complete Segal space of a 2-fold complete Segal space.

\begin{definition}
Let $\tau_\Delta \colon \Delta \rightarrow \Delta \times \Delta$ be the functor defined by $[m] \mapsto ([m], [0])$.  The induced functor $\tau_\Delta^* \colon \SSets^{\Deltaop \times \Deltaop} \rightarrow \SSets^{\Deltaop}$ defines the \emph{underlying simplicial space} of a bisimplicial space.
\end{definition}

\begin{prop} \cite{inftyn2}
If $W$ is a 2-fold (complete) Segal space, then its underlying simplicial space $\tau_\Delta^*W$ is a (complete) Segal space.
\end{prop}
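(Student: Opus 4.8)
The plan is to unwind the definition of $\tau_\Delta^*$ and observe that the underlying simplicial space is nothing other than one of the simplicial spaces already built into the hypotheses. Since $\tau_\Delta([m]) = ([m],[0])$, for a bisimplicial space $W$ we have $(\tau_\Delta^*W)_m = W_{m,0}$; that is, $\tau_\Delta^*W$ is precisely the simplicial space $W_{*,0}$ obtained by fixing the second simplicial coordinate at $[0]$ and letting the first vary, with the space coordinate untouched. So the proof reduces to showing that $W_{*,0}$ is a (complete) Segal space under each hypothesis.

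For the non-complete statement, recall that a $2$-fold Segal space is in particular a double Segal space, and that a double Segal space is by definition Reedy fibrant with each $W_{*,k}$ a Segal space; taking $k=0$ gives that $W_{*,0} = \tau_\Delta^*W$ is a Segal space. If one prefers to see this more explicitly: Reedy fibrancy of $W$ restricts to Reedy fibrancy of $W_{*,0}$, since the row and column restrictions of a Reedy fibrant bisimplicial space are again Reedy fibrant (a fact already implicit in the definition of a double Segal space), and the Segal map $W_{k,0} \to W_{1,0} \times_{W_{0,0}} \cdots \times_{W_{0,0}} W_{1,0}$ is the value at level $[0]$ of the first-direction Segal map $W_{k,*} \to W_{1,*} \times_{W_{0,*}} \cdots \times_{W_{0,*}} W_{1,*}$ of the double Segal space, since limits in a functor category are computed objectwise; hence it is a weak equivalence. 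For the complete statement there is then essentially nothing left to do, since condition \eqref{2foldcss1} of Definition \ref{2-foldcss} asserts precisely that $W_{*,k}$ is a complete Segal space for every $k \geq 0$, so $k=0$ yields that $\tau_\Delta^*W = W_{*,0}$ is a complete Segal space. Should one instead work with the a priori weaker formulation of Definition \ref{2-foldcss} recalled in the subsequent remark, completeness of $W_{*,0}$ is assumed outright, so the conclusion is immediate either way.

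The only point that requires attention is the bookkeeping above: one must confirm that Reedy fibrancy of the bisimplicial space genuinely descends to the restriction $W_{*,0}$ — so that the pullbacks $W_{1,0}\times_{W_{0,0}}\cdots\times_{W_{0,0}} W_{1,0}$ are homotopy pullbacks and the Segal maps carry their intended meaning — and that these Segal maps really are the level-$[0]$ components of the first-direction Segal maps of $W$. Both are formal, so there is no substantive obstacle; the substance of the proposition is the identification $\tau_\Delta^*W = W_{*,0}$ with a simplicial space that the hypotheses have already assumed to be a (complete) Segal space.
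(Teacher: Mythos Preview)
Your argument is correct. The paper does not actually prove this proposition in the text; it merely states it with a citation to \cite{inftyn2}. Your proof is the natural one: once you identify $\tau_\Delta^*W$ with the simplicial space $W_{*,0}$, the complete case is literally the instance $k=0$ of condition \eqref{2foldcss1} in Definition \ref{2-foldcss}, and the non-complete case is the instance $k=0$ of the double Segal space condition. There is nothing further to compare.
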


In particular, $\tau_\Delta^*W$ has an associated homotopy category.  In the following definition, we let $L$ denote the functorial fibrant replacement functor in the double Segal space model structure.

\begin{definition} 
Let $W$ and $Z$ be objects of $\SSets^{\Deltaop \times \Deltaop}$.  A functor $f \colon W \rightarrow Z$ is a \emph{Dwyer-Kan equivalence} if:
\begin{itemize}
    \item for any $x,y \in W_{0,0,0}$, the induced map $\umap_{LW}(x,y) \rightarrow \umap_{LZ}(fx,fy)$ is a weak equivalence in $\css$, and 
    
    \item the induced map $\Ho(\tau_\Delta^* LW) \rightarrow \Ho(\tau_\Delta^*LZ)$ is essentially surjective.
\end{itemize}
\end{definition}

\begin{theorem} \cite{inftyn2}
A map $f \colon W \rightarrow Z$ of Segal objects in $\css$ is a Dwyer-Kan equivalence if and only if it is a weak equivalence in $\css(\css)$.
\end{theorem}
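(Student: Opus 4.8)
The plan is to reduce the statement to the known characterization of Dwyer--Kan equivalences for complete Segal spaces (Theorem~\cite[7.6, 7.7]{rezk}) together with the already-established fact that the underlying simplicial space functor $\tau_\Delta^*$ takes 2-fold (complete) Segal spaces to (complete) Segal spaces, and is suitably homotopy-invariant. Recall that weak equivalences in the localized model structure $\css(\css)$ are characterized by detecting $S$-local objects, so the real content is to match the two descriptions: the ``geometric'' one given by the levelwise weak equivalences after localizing, and the ``categorical'' one given by the Dwyer--Kan definition (fully faithfulness on mapping objects plus essential surjectivity on homotopy categories).

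First I would treat the case where $W$ and $Z$ are themselves 2-fold complete Segal spaces, i.e.\ fibrant in $\css(\css)$; the general case follows by applying the functorial fibrant replacement $L$ and using that both notions of weak equivalence are invariant under $L$ (for Dwyer--Kan equivalence this is built into the definition, and for $\css(\css)$-weak equivalences it is the 2-out-of-3 property). For fibrant $W,Z$, a map $f\colon W\to Z$ is a weak equivalence in $\css(\css)$ if and only if it is a levelwise weak equivalence of simplicial spaces, i.e.\ $W_{*,k}\to Z_{*,k}$ is a weak equivalence in $\css$ for all $k$ --- this is the standard fact that between fibrant objects of a localization, weak equivalences are the underlying ones. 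Now I would argue in both directions. If $f$ is a levelwise $\css$-equivalence, then in particular $\tau_\Delta^* f\colon \tau_\Delta^* W\to \tau_\Delta^* Z$ (which is the map on $(-,0)$-level) is a weak equivalence between complete Segal spaces, hence a Dwyer--Kan equivalence of complete Segal spaces by Rezk's theorem, so it is essentially surjective on homotopy categories; and from the pullback square \eqref{mappingobj} defining $\umap_W(x,y)$, together with the fact that this pullback is a homotopy pullback (Reedy fibrancy) and $f$ is a $\css$-equivalence on $W_{1,*}$ and $W_{0,*}$, one concludes $\umap_W(x,y)\to \umap_Z(fx,fy)$ is a $\css$-equivalence. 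That gives both clauses of the Dwyer--Kan definition. Conversely, suppose $f$ is a Dwyer--Kan equivalence. I would first use the mapping-object condition plus essential surjectivity to deduce that $\tau_\Delta^* f$ is a Dwyer--Kan equivalence of complete Segal spaces: fully faithfulness of $\tau_\Delta^*f$ on its mapping spaces is exactly the $\umap$ condition (the mapping spaces of $\tau_\Delta^*W$ at a pair $(x,y)$ are computed from $\umap_W(x,y)$ by evaluating at level $0$ in the second coordinate, and completeness of $\umap_W(x,y)$ lets one recover the whole thing), and essential surjectivity is given directly. By Rezk's theorem $\tau_\Delta^* f$ is then a $\css$-equivalence. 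Finally, a diagram-chase using the Segal condition in the first variable (so that $W_{m,*}$ is built as an iterated homotopy fibre product of copies of $W_{1,*}$ over $W_{0,*}$) propagates the $\css$-equivalences from levels $0$ and $1$ to all levels $m$, giving that $f$ is a levelwise $\css$-equivalence, hence a weak equivalence in $\css(\css)$.

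The step I expect to be the main obstacle is the reverse direction's passage from the two Dwyer--Kan clauses back to a \emph{levelwise} $\css$-equivalence: one must know that fully faithfulness on mapping objects, essential surjectivity, and the Segal/completeness conditions together force $W_{1,*}\to Z_{1,*}$ to be a $\css$-equivalence (not merely fibrewise over the object space), and this is the place where completeness is genuinely used --- it is what lets a ``fibrewise'' statement about $\umap_W(x,y)$ be upgraded to a statement about the total space $W_{1,*}$, because the base $W_{0,*}$ is controlled by the equivalence on objects and the completeness condition rigidifies how the objects sit inside the morphisms. Once $W_{0,*}$ and $W_{1,*}$ are handled, the remaining levels are formal via the Segal condition, and the whole argument is essentially a 2-categorical reprise of Rezk's original $(\infty,1)$-level result, which is presumably how \cite{inftyn2} organizes it.
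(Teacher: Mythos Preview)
The paper itself is a survey and does not prove this theorem; it merely states it and cites \cite{inftyn2}, so there is no in-paper proof to compare against. I will comment on your proposal on its own merits.

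Your argument for the case where $W$ and $Z$ are already 2-fold complete Segal spaces is sound and follows the expected pattern. The real gap is in your reduction step. You claim that invariance of Dwyer--Kan equivalence under fibrant replacement is ``built into the definition'', but look carefully at \emph{which} fibrant replacement the definition uses: the paper's $L$ is localization in the \emph{double Segal space} model structure, not in $\css(\css)$. Since $W$ and $Z$ are by hypothesis Segal objects in $\css$, they are already (Reedy fibrant) double Segal spaces, so $LW\simeq W$ and $LZ\simeq Z$; the definitional $L$ does nothing here. The fibrant replacement you actually need for your reduction is the $\css(\css)$-fibrant replacement $W\to\widehat W$, and knowing that \emph{this} map is a Dwyer--Kan equivalence is not automatic. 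It is, in fact, the substantive content of the theorem.

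This is precisely analogous to the $(\infty,1)$-situation: Rezk's result for maps of \emph{Segal spaces} (the analogue of the present statement) does not follow formally from the result for maps of \emph{complete} Segal spaces by ``reducing to fibrant objects''; one must prove directly that the completion map from a Segal space to its associated complete Segal space is a Dwyer--Kan equivalence, and that is where the genuine work lies. You have correctly sketched the fibrant-case argument and correctly flagged the fibrewise-to-total-space step as delicate, but you have inadvertently hidden an equally hard step---the one that actually corresponds to the stated theorem---inside an unjustified reduction.
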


To show that 2-fold complete Segal spaces give a good model for $(\infty, 2)$-categories, we would like to show that the model category $\css(\css)$ is Quillen equivalent to $\css$-$\Cat$.  Just as in the comparison between simplicial categories and complete Segal spaces, we make use of an intermediate model structure which uses a discreteness condition.  Thus, we turn to one of our variant models.

\begin{definition}
A \emph{Segal category object in complete Segal spaces} is a functor $W \colon \Deltaop \rightarrow \SSets^{\Deltaop}$ such that $W_0$ is discrete and for each $k \geq 0$ the map 
\[ W_k \rightarrow \underbrace{W_1 \times_{W_0} \cdots \times_{W_0} W_1}_k \]
is a weak equivalence of complete Segal spaces.
\end{definition}

As for Segal categories, observe that we are not simply saying that a Segal category object is a Segal object in complete Segal spaces with the appropriate discreteness, because we do not want to impose Reedy fibrancy.  We once again make use of two different model structures for our desired comparison.

\begin{theorem} \cite{inftyn1}
There are two model structures on the category of functors $W \colon \Deltaop \rightarrow \SSets^{\Deltaop}$ such that the fibrant objects are Segal category objects and the weak equivalences are the Dwyer-Kan equivalences:
\begin{itemize}
    \item the model structure $\Secat_c(\css)$, in which the fibrant objects are the Segal category objects in complete Segal spaces which are Reedy fibrant, and
    
    \item the model structure $\Secat_f(\css)$, in which the fibrant objects are the Segal category objects in complete Segal sapces which are projective fibrant.
\end{itemize}
Furthermore, there is a Quillen equivalence
\[ \Secat_f(\css) \rightleftarrows \Secat_c(\css) \]
given by the identity functors.
\end{theorem}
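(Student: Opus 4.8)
The plan is to transport the construction of the model structures $\Secat_c$ and $\Secat_f$ on Segal precategories (Theorem \cite[5.1, 7.1]{thesis}) up one categorical level, replacing the role of $\SSets$ throughout by the cartesian model structure $\css$. Since $\css$ is a left Bousfield localization of the Reedy model structure on $\SSets^{\Deltaop}$, it is a left proper, combinatorial, cartesian model category, which supplies all the structure needed. First I would equip the category of functors $W \colon \Deltaop \rightarrow \SSets^{\Deltaop}$ with $W_0$ discrete — the ``Segal precategory objects in complete Segal spaces'' — with two preliminary model structures whose weak equivalences are detected levelwise in $\css$: one whose cofibrations and fibrations are modeled on the Reedy model structure on functors $\Deltaop \rightarrow \css$, cut down by the discreteness condition exactly as $\SSets^{\Deltaop}_{\disc}$ is cut down from simplicial spaces, and one modeled on the projective model structure. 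As that subcategory is not a diagram category, these two structures require the same hands-on argument (or transfer along the adjunction with $\SSets^{\Deltaop}$) as in the one-level case; they are cofibrantly generated and left proper, inheriting these properties from $\css$.

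Next I would left Bousfield localize each of these at the set of Segal maps in the outer simplicial direction, invoking the existence theorem \cite[4.1.1]{hirsch}. By construction the fibrant objects of the two localizations are the Reedy-fibrant, respectively projective-fibrant, functors $\Deltaop \rightarrow \css$ with discrete $0$-object satisfying the Segal condition — that is, the Reedy-fibrant, respectively projective-fibrant, Segal category objects in complete Segal spaces. The substantive step is then to identify the resulting classes of local equivalences with the Dwyer-Kan equivalences. For this I would adapt the arguments of Rezk \cite[7.6, 7.7]{rezk} and \cite[\S 5]{thesis}: using a functorial ``rigidification'' functor $L$ that replaces a Segal precategory object in $\css$ by a weakly equivalent Segal category object, together with the mapping objects $\umap_W(x,y)$ of \eqref{mappingobj} — which are complete Segal spaces when $W$ is fibrant — one shows that a map between fibrant objects is a local equivalence exactly when it induces weak equivalences on all mapping objects $\umap$ in $\css$ and is essentially surjective on the homotopy category of the underlying complete Segal space, and then propagates the characterization to arbitrary maps by applying $L$.

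Finally, the comparison $\id \colon \Secat_f(\css) \rightleftarrows \Secat_c(\css) \colon \id$ is a Quillen pair because the projective-type cofibrations are Reedy-type cofibrations, and it is automatically a Quillen equivalence: the two model structures have the same underlying category and, by the previous step, the same class of weak equivalences, so the condition defining a Quillen equivalence holds trivially. I expect the main obstacle to be the Dwyer-Kan identification of the second paragraph: one has to verify that the discreteness condition and the iterated, two-dimensional mapping objects interact with the localization exactly as their one-dimensional counterparts do, so that the $\css$-enriched analog of Rezk's characterization of Dwyer-Kan equivalences goes through with no new phenomena appearing.
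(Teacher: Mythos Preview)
The paper is a survey and does not supply a proof of this theorem beyond the citation to \cite{inftyn1}; the only indication of method is the sentence following the next theorem, that ``the idea behind it is to apply the same methods as we did in the case of $(\infty,1)$-categories, but replacing the model structure on simplicial sets by $\css$.'' Your proposal is precisely this strategy, carried out in appropriate detail: build Reedy- and projective-type model structures on Segal precategory objects in $\css$, localize at the Segal maps, identify the local equivalences with Dwyer-Kan equivalences via a $\css$-valued analogue of the arguments in \cite{rezk} and \cite{thesis}, and deduce the Quillen equivalence from the equality of weak equivalences. This matches both the paper's stated approach and the argument actually given in \cite{inftyn1}.
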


Now, the following result mirrors the Quillen equivalences between simplicial categories, Segal categories, and complete Segal spaces.

\begin{theorem} \cite{inftyn1}, \cite{inftyn2}
\begin{enumerate}
    \item The bisimplicial nerve functor induces a Quillen equivalence
    \[ \css-\Cat \leftrightarrows \Secat_f(\css). \]
    
    \item The inclusion functor induces a Quillen equivalence
    \[ \Secat_c(\css) \rightleftarrows \css(\css). \]
\end{enumerate}
\end{theorem}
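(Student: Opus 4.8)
The plan is to realize both comparisons as the $(\infty,2)$-level analogues of the $(\infty,1)$-level Quillen equivalences $F\colon\Secat_f\rightleftarrows\mathcal{SC}\colon\snerve$ and $I\colon\Secat_c\rightleftarrows\css\colon D$ recorded above, with $\css$ now playing the role formerly played by the simplicial set model $\SSets$. In both statements the two underlying categories are locally presentable: $\css$-$\Cat$ because it is a category of enriched categories over a locally presentable cartesian base, and the categories of Segal category objects and Segal precategory objects because they sit as accessibly embedded subcategories of a presheaf category. The bisimplicial nerve functor $N$ is accessible and preserves limits, and the inclusion functor $I$ preserves colimits, so the adjoint functor theorem for presentable categories supplies the left adjoint $F$ to $N$ and the right adjoint $D$ to $I$; as in the $(\infty,1)$ case, $F$ is to be thought of as a rigidification and $D$ as a discretization of the degree-zero simplicial space.

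For part (1), I would first check that $(F,N)$ is a Quillen pair for the model structure $\Secat_f(\css)$. This is precisely the purpose of the subscript-$f$ structure: as in the remark comparing $\Secat_f$ with $\Secat_c$, its cofibrations are chosen so as to correspond under $F$ to cofibrations of $\css$-$\Cat$, equivalently so that $N$ carries the (acyclic) fibrations of $\css$-$\Cat$ to projective (acyclic) fibrations of Segal category objects; this reduces to unwinding how fibrations of $\css$-$\Cat$ are detected on mapping objects and on the underlying complete Segal space. To upgrade to a Quillen equivalence I would proceed in two steps. First, $N$ creates Dwyer--Kan equivalences between fibrant objects: unwinding the pullback defining $\umap$ shows $\umap_{N\mathcal C}(x,y)\simeq\uMap_\mathcal C(x,y)$, and $\tau_\Delta^*N\mathcal C$ has homotopy category $\pi_0\mathcal C$, so a map becomes a weak equivalence in $\Secat_f(\css)$ --- equivalently, by the theorem identifying Dwyer--Kan equivalences of Segal objects in $\css$ with weak equivalences in $\css(\css)$, a Dwyer--Kan equivalence --- exactly when the original map of $\css$-enriched categories is one. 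Second, the derived unit $X\to N(\widehat{FX})$ is a weak equivalence for cofibrant $X$, where $\widehat{(-)}$ denotes fibrant replacement; this is the content of the statement that rigidifying a Segal category object in $\css$ to a $\css$-enriched category does not change the homotopy type, and, following the thesis argument for $n=1$, it is proved by reducing to objects assembled from free-type generators and computing the mapping objects of $FX$ directly.

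For part (2) I use the adjunction $I\colon\Secat_c(\css)\rightleftarrows\css(\css)\colon D$, where $D$ collapses the degree-zero simplicial space of a bisimplicial space to its set of components. That $(I,D)$ is a Quillen pair amounts to checking that $I$ preserves cofibrations --- the cofibrations of $\Secat_c(\css)$ are built from inclusions that remain cofibrations after forgetting the discreteness constraint --- and acyclic cofibrations, which holds because $I$ sends Dwyer--Kan equivalences to Dwyer--Kan equivalences, i.e., by the preceding theorem, to weak equivalences of $\css(\css)$. For the Quillen equivalence, the unit $\id\to DI$ is an isomorphism since $I$ is a fully faithful left adjoint, so the derived unit is a weak equivalence as soon as one knows $D$ takes the relevant fibrant-replacement map to a weak equivalence; and the derived counit $IDW\to W$, for $W$ a $2$-fold complete Segal space, is shown to be a Dwyer--Kan equivalence by the now-familiar route: the completeness and essential-constancy hypotheses on $W_{0,*}$ make collapsing it homotopically harmless, so that $IDW\to W$ induces weak equivalences on all mapping objects $\umap$ and an essentially surjective functor on the homotopy categories of the underlying complete Segal spaces. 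Both points reduce to the single assertion that $D$ preserves Dwyer--Kan equivalences --- the $(\infty,2)$-analogue of the fact that discretizing the zero-space of a complete Segal space yields a Dwyer--Kan equivalent Segal category.

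The main obstacle in part (1) is the second step: the left adjoint $F$ is built by a transfinite colimit and could a priori distort mapping objects, so proving the derived unit is a weak equivalence requires genuine control of $F$ on generators, exactly as in the delicate part of the $n=1$ comparison. In part (2) the corresponding difficulty is showing that $D$ --- collapsing a space of objects to its components --- preserves Dwyer--Kan equivalences. In both cases there is the additional organizational point of arranging the arguments so that they specialize the general $(\infty,n)$ machinery of \cite{inftyn1} and \cite{inftyn2}, which is set up by induction on $n$, rather than being one-off proofs at $n=2$.
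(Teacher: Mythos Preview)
The paper is a survey and does not prove this theorem; it cites \cite{inftyn1} and \cite{inftyn2} and offers only the one-sentence gloss that ``the idea behind it is to apply the same methods as we did in the case of $(\infty,1)$-categories, but replacing the model structure on simplicial sets by $\css$.'' Your proposal is exactly a fleshing-out of that sentence: you run the $(F,\snerve)$ and $(I,D)$ arguments of \cite{thesis} with $\css$ in the role of $\SSets$, and you correctly flag the two genuine obstacles --- control of the rigidification $F$ on generators for the derived unit in part (1), and preservation of Dwyer--Kan equivalences by the discretization $D$ in part (2) --- which are precisely the points where the cited papers do the real work. So your approach matches the paper's indicated strategy, only with considerably more detail than the paper itself provides.
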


This theorem brings together all the models we have discussed thus far, establishing that they are all equivalent to one another.  While the technical points of this proof are quite difficult, the idea behind it is to apply the same methods as we did in the case of $(\infty,1)$-categories, but replacing the model structure on simplicial sets by $\css$.  

However, we have only considered two of the four possible combinations of completeness and discreteness conditions on double Segal spaces.  In particular, one could argue that we have been giving preferential treatment to the completeness condition.  Historically, however, a notion of Segal category objects in Segal categories came first, in work of Hirschowitz and Simpson \cite{hs}, and are treated in \cite{simpson}.

\begin{definition}
A bisimplicial space $X \colon \Deltaop \times \Deltaop \rightarrow \SSets$ is a \emph{Segal 2-precategory} if:
\begin{enumerate}
\item for any $m \geq 0$, the simplicial set $X_{m,0}$ is discrete; in other words, $X$ can be thought of as a simplicial object in Segal precategories; and

\item for any $k \geq 0$, the simplicial set $X_{0,k}$ is discrete.
\end{enumerate}
If, in addition, for any $k \geq 2$, the Segal map
\[ X_{k,*} \rightarrow \underbrace{X_{1, *} \times_{X_{0,*}} \cdots \times_{X_{0,*}} X_{1,*}}_k \]
is a Dwyer-Kan equivalence in $\Secat_c$, then $X$ is a \emph{Segal 2-category}.
\end{definition}

In other words, if we think of a Segal 2-category as a functor $X \colon \Deltaop \rightarrow \SSets^{\Deltaop}_{\disc}$, then each $X_k$ is a Segal category, so for any $m \geq 2$, the Segal map
\[ X_{k,m} \rightarrow \underbrace{X_{k,1} \times_{X_{k,0}} \cdots \times_{X_{k,0}} X_{k,1}}_m \]
is a weak equivalence of simplicial sets.  

\begin{theorem} \cite{pel}, \cite{discrete}
There is a model structure on the category of Segal 2-precategories such that the weak equivalences are Dwyer-Kan equivalences and the fibrant objects are the Reedy fibrant Segal 2-categories.
\end{theorem}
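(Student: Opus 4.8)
The plan is to follow the same template used earlier for $\Secat_c$ and for $\Secat_c(\css)$, but carried out in the \emph{discrete} direction twice, obtaining the desired model structure as a left Bousfield localization of a Reedy-type structure built so as to accommodate the two discreteness conditions. First I would fix the underlying category: regard a Segal $2$-precategory either as a functor $X \colon \Deltaop \to \SSets^{\Deltaop}_{\disc}$ whose value in degree $0$ in addition lands in discrete simplicial spaces, or equivalently as the full subcategory of $\SSets^{\Deltaop \times \Deltaop}$ cut out by the two discreteness conditions. This subcategory is closed under all small limits and colimits (a levelwise colimit of sets in the relevant bidegrees is again a set), so it is locally presentable and the usual combinatorial machinery applies. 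On it one puts the analogue of the model structure underlying $\Secat_c$: all objects cofibrant, cofibrations the monomorphisms that are isomorphisms on the discrete pieces, and fibrant objects the bisimplicial spaces that are Reedy fibrant among Segal $2$-precategories. Exactly as in the one-variable case, this is the correct substitute for the Reedy structure on all bisimplicial spaces, since the discreteness allows one to use strict rather than homotopy fibered products.

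Next I would localize. Let $S$ be the set of maps of Segal $2$-precategories encoding the Segal conditions in the two simplicial directions, for all $k \geq 2$ (freely discretizing the relevant sources and targets so that they are genuine Segal $2$-precategories). Because the starting model structure is combinatorial and left proper --- left properness holds just as for $\Secat_c$, the cofibrations being monomorphisms --- the localization theorem quoted earlier produces a model structure with the same cofibrations whose fibrant objects are the $S$-local Reedy fibrant objects. An elementary unwinding of the Segal-map description identifies these local objects with the Reedy fibrant Segal $2$-categories: locality with respect to one family of maps forces the Segal condition in the outer direction, locality with respect to the other forces it in each $X_{k,*}$, and Reedy fibrancy turns the relevant strict fibered products into the homotopy fibered products appearing in the definition.

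The substantive step is to show that the weak equivalences of this localized structure are exactly the Dwyer--Kan equivalences of Segal $2$-precategories. One inclusion is formal: an $S$-local equivalence induces weak equivalences of $\Map^h(-,X)$ into every local object $X$, and specializing to mapping objects $\umap$ and to the homotopy category of the underlying Segal space $\tau_\Delta^* (-)$ shows it is a Dwyer--Kan equivalence. For the converse I would argue as in Rezk's and the author's earlier comparisons: using a functorial fibrant replacement in the double Segal space model structure together with two-out-of-three, reduce to a Dwyer--Kan equivalence $f \colon W \to Z$ between Reedy fibrant Segal $2$-categories, and then prove that such an $f$ is already a levelwise (Reedy) weak equivalence. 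This last claim is proved by induction on simplicial degree: the outer Segal condition expresses $W_k$ as an iterated strict fibered product of copies of $W_1$ over the discrete $W_0$, so essential surjectivity on $\Ho(\tau_\Delta^* W)$ together with the mapping-object equivalences propagates up the degrees, invoking one degree at a time the already-established fact (from the $\Secat_c$ theory) that a Dwyer--Kan equivalence of Segal categories is a weak equivalence in $\Secat_c$ for the inner simplicial variable.

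The main obstacle is precisely this last reduction, and in particular the interaction of the two discreteness conditions: because imposing that each $X_{0,k}$ be discrete is a restriction of the underlying category rather than a localization, one must be careful --- exactly as with the $\Secat_c$ versus $\css$ distinction flagged in the remark above --- that the strict fibered products used to decompose $W_k$ compute the intended homotopy limits, and that fibrant replacement can be performed within the category of Segal $2$-precategories rather than only among all bisimplicial spaces. Granting the single-variable results for $\Secat_c$, however, the remaining bookkeeping is an iteration of arguments already in the literature of \cite{pel} and \cite{discrete}, and no genuinely new phenomenon arises.
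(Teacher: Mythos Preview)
The paper is a survey and gives no proof of this theorem; it is simply stated with citations to \cite{pel} and \cite{discrete}.  There is therefore nothing in the paper itself to compare your proposal against.

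That said, your outline is entirely in keeping with how those references proceed and with the paper's own treatment of the one-variable analogue $\Secat_c$: work in the full subcategory of bisimplicial spaces cut out by the two discreteness conditions (which is indeed closed under limits and colimits, since discrete simplicial sets are), establish a Reedy-type model structure there in which all objects are cofibrant, localize at the Segal maps in each simplicial direction, and then identify the resulting weak equivalences with Dwyer--Kan equivalences by reducing, one simplicial variable at a time, to the already-established $\Secat_c$ theory.  One small correction: in the $\Secat_c$-style structure the cofibrations are \emph{all} monomorphisms, not only those which are isomorphisms on the discrete pieces; this is what makes every object cofibrant, as the paper records for $\Secat_c$.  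The genuine work, as you correctly flag at the end, lies in carrying out the fibrant replacement and the Reedy-type construction \emph{within} the subcategory of Segal $2$-precategories rather than in all bisimplicial spaces, and that is precisely what \cite{pel} and \cite{discrete} supply.
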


Once again, we could take an analogous structure which is projective fibrant, rather than Reedy (or even mix and match the two in the two simplicial directions!), but we do not worry about this structure here, since their main purpose was to aid in comparison with an enriched category model, and we do not need to make such a comparison here.

Now we would like to know that all of these models are equivalent.  The idea is that we can use the Quillen equivalence between $\Secat_c$ and $\css$ to do so.  Once again, the proofs are more delicate, but the core idea comes from that original comparison.

\begin{theorem} \cite{discrete}
The inclusion functors and their right adjoints, which are given by appropriate discretization functors, induce Quillen equivalences
\[ \Secat_c(\Secat_c) \rightleftarrows \Secat_c(\css) \rightleftarrows \css(\css). \]
\end{theorem}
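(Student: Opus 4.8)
The plan is to obtain both Quillen equivalences by promoting, one model-categorical level upward, the Quillen equivalence $I \colon \Secat_c \rightleftarrows \css \colon D$ between Segal categories and complete Segal spaces from \cite[6.3]{thesis}. In fact the right-hand equivalence $\Secat_c(\css) \rightleftarrows \css(\css)$ is precisely the one already established above (the inclusion functor is a left Quillen equivalence $\Secat_c(\css)\rightleftarrows\css(\css)$), so the only genuinely new work is the left-hand equivalence $\Secat_c(\Secat_c) \rightleftarrows \Secat_c(\css)$; the full chain then follows by composing Quillen equivalences.

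For that left-hand equivalence I would first note that the inclusion $I \colon \SSets^{\Deltaop}_{\disc} \rightarrow \SSets^{\Deltaop}$ and its right adjoint $D$ extend degreewise in the outer simplicial direction to an adjoint pair $I_* \dashv D_*$ between the category of functors $\Deltaop \rightarrow \SSets^{\Deltaop}_{\disc}$ and the category of functors $\Deltaop \rightarrow \SSets^{\Deltaop}$, via $(I_*W)_k = I(W_k)$ and $(D_*V)_k = D(V_k)$. One then checks that $I_*$ carries a Segal category object in $\Secat_c$ to a Segal category object in $\css$: the inclusion $I$ preserves the discreteness of the degree-zero piece, preserves the strict pullbacks appearing in the Segal maps (being a full inclusion onto objects with discrete degree-zero part), and sends the fibrant objects of $\Secat_c$ to objects weakly equivalent to complete Segal spaces, so the Segal maps remain weak equivalences of the appropriate kind. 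Symmetrically, $D_*$ sends a Segal category object in $\css$ back to a Segal category object in $\Secat_c$, using that $D$ collapses the degree-zero simplicial set to its set of components.

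Next I would verify that $(I_*, D_*)$ is a Quillen pair for the model structures $\Secat_c(\Secat_c)$ and $\Secat_c(\css)$. Both are localizations of projective- or Reedy-type model structures on the relevant functor categories, so it is enough that $I_*$ preserves cofibrations and acyclic cofibrations before localizing — immediate from $I$ being left Quillen, applied degreewise — and that $I_*$ sends the localizing maps (the Segal maps and the maps encoding the discreteness and completeness conditions) to local equivalences, which reduces to the corresponding statement for $I$ at the $(\infty,1)$-level. For the Quillen \emph{equivalence}, the key input is that $I$ and $D$ preserve Dwyer--Kan equivalences, a consequence of \cite[6.3]{thesis} together with Rezk's Theorem \cite[7.6, 7.7]{rezk}. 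From this it follows that the mapping objects $\umap_W(x,y)$, which are themselves Segal spaces, are compared under $I_*$ and $D_*$ via the $(\infty,1)$-equivalence, and that the underlying simplicial space $\tau_\Delta^* W$ and its homotopy category $\Ho(\tau_\Delta^* W)$ are preserved up to weak equivalence (here one also checks that forming the underlying Segal space commutes with $I_*$ and $D_*$). Combining these, for cofibrant $X$ in $\Secat_c(\Secat_c)$ and fibrant $Y$ in $\Secat_c(\css)$ the adjunction map $I_* X \rightarrow Y$ is a Dwyer--Kan equivalence if and only if its adjoint $X \rightarrow D_* Y$ is, which gives the Quillen equivalence.

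The main obstacle, and the reason the argument is more delicate than its $(\infty,1)$-level prototype, is controlling the discretization functor $D$ in the presence of two layers of degree-zero data: one must confirm that applying $D$ degreewise in the outer direction to a Reedy fibrant Segal category object in $\css$ still produces an object with the correct discreteness and Segal properties in both simplicial directions — in particular a genuine Segal 2-category, even though it is no longer built out of complete Segal spaces — and that this collapsing disturbs neither the mapping objects nor the underlying complete Segal space beyond a weak equivalence. Once this two-dimensional bookkeeping is in place, the homotopical content is inherited from the single Quillen equivalence $\Secat_c \rightleftarrows \css$.
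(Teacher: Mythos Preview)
The paper does not give a proof of this theorem; it is cited from \cite{discrete} (in preparation), and the only indication of method is the sentence immediately preceding the statement: ``The idea is that we can use the Quillen equivalence between $\Secat_c$ and $\css$ to do so.  Once again, the proofs are more delicate, but the core idea comes from that original comparison.''  Your proposal is precisely an elaboration of this idea --- promote the adjunction $(I,D)$ of \cite[6.3]{thesis} levelwise in the outer simplicial direction, verify the Quillen pair property before and after localization, and then deduce the Quillen equivalence from the fact that $I$ and $D$ detect and reflect Dwyer--Kan equivalences --- so you are on the track the author has in mind.

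One small point of bookkeeping to tighten: the underlying categories do not match your description exactly.  The category underlying $\Secat_c(\Secat_c)$ consists of Segal $2$-precategories, which carry \emph{two} discreteness constraints ($X_{m,0}$ and $X_{0,k}$ discrete for all $m,k$), while the category underlying $\Secat_c(\css)$ consists of functors $\Deltaop\to\SSets^{\Deltaop}$ with $W_0$ discrete.  So your levelwise $I_*$ must be restricted to, and $D_*$ must land in, the subcategory where the outer degree-zero object is discrete; you should check that $D_*$ really preserves this (it does, since $D$ sends discrete simplicial spaces to discrete sets).  This is exactly the ``two-dimensional bookkeeping'' you flag at the end, and once it is in place your outline is the intended argument.
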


We can summarize the results of this section via the following diagram, in which the displayed arrows are the left adjoint of a Quillen equivalence:
\[ \xymatrix{\css-\Cat & \Secat_f(\css) \ar[r] \ar[l] & \Secat_c(\css) \ar[r] & \css(\css) \\
&& \Secat_c(\Secat_c) \ar[u] & } \]

\begin{remark}
In theory, there is one remaining possibility, that of complete Segal objects in Segal categories.  However, although such objects can be defined, they are not substantively different from Segal 2-categories.  The problem goes back to the question of when Segal categories are complete Segal spaces. 

To see what happens, let us modify Definition \ref{2-foldcss} to what we expect a complete Segal object in Segal categories to be.  Such an object should be a functor $W \colon \Deltaop \rightarrow \SSets^{\Deltaop}_{\disc}$, or a functor $W \colon \Deltaop \times \Deltaop \rightarrow \SSets$ with each $W_{m,0}$ discrete. It should satisfy:
\begin{enumerate}
\item \label{cssinsecat1}
for every $k \geq 0$, the simplicial space $W_{*,k}$ is a complete Segal space;

\item for every $x,y \in W_{0,0,0}$, the simplicial space $\umap_W(x,y)$ is a Segal category; and

\item \label{cssinsecat3} the simplicial space $W_{0,*}$ is essentially constant.
\end{enumerate}
We have assumed that the simplicial set $W_{0,0}$ is discrete, so by condition \ref{cssinsecat3} each $W_{0,k}$ must be homotopy discrete.  However, by condition \eqref{cssinsecat1}, each of these simplicial sets must be the degree zero space of a complete Segal space.  Thus, these complete Segal spaces are essentially Segal categories, with the only variation that the degree zero spaces are homotopy discrete rather than actually discrete. We give a more detailed treatment of this phenomenon in \cite{discrete2}.
\end{remark}

\section{$\Theta_2$-models for $(\infty,2)$-categories}

The models in the previous section give us a number of ways to think about $(\infty,2)$-categories as weakly enriched categories.  However, there are reasons why we might want a different approach.  First, conditions such as the essential constancy of a complete Segal object suggest that somehow a bisimplicial diagram is bigger than what we need.  Furthermore, such models are not cartesian, a problem which led Rezk to look for an alternative.  Another approach which gives an answer to these difficulties is given via diagrams which are modeled by the category $\Theta_2$ rather than $\Delta \times \Delta$.  

Recall from the previous section that when modeling an $(\infty,2)$-category by a bisimplicial diagram, we have extra data, in that we do not want interesting ``vertical" morphisms.  So, the main idea is that we take diagrams of simplicial sets that more concisely model the data that we actually want.

Let us state the formal definition, then look at an example.

\begin{definition}
The category $\Theta_2$ has objects $[m]([k_1], \ldots, [k_m])$, where $[m]$ and each $[k_i]$ are objects of $\Delta$, and morphisms $[m]([k_1], \ldots, [k_m]) \rightarrow [p]([\ell_1], \ldots, [\ell_p])$ given by a function $\delta \colon [m] \rightarrow [p]$ in $\Delta$, and functions $[k_i] \rightarrow [\ell_j]$ defined whenever $\delta(i-1) < j \leq \delta(i)$.
\end{definition}

\begin{example}
The object $[4]\left([2], [3], [0], [1]\right)$ of $\Theta_2$ can be depicted as
\[ \xymatrix@1{0 \ar[r]^{[2]} & 1 \ar[r]^{[3]} & 2 \ar[r]^{[0]} & 3\ar[r]^{[1]} & 4}. \]
Since these labels, themselves objects of $\Delta$, can also be interpreted as strings of arrows, we get a diagram such as
\[\xymatrix{ {0} \ar@/^2pc/[r]^{}="10"  \ar[r]^{}="11" \ar@/_2pc/[r]^{}="12" \ar@{=>}"10";"11" \ar@{=>}"11";"12"
& {1} \ar@/^3pc/[r]^{}="20" \ar@/^1pc/[r]^{}="21"
\ar@/_1pc/[r]^{}="22" \ar@/_3pc/[r]^{}="23" \ar@{=>}"20";"21"
\ar@{=>}"21";"22" \ar@{=>}"22";"23"
& {2} \ar[r]
& {3} \ar@/^1pc/[r]^{}="30" \ar@/_1pc/[r]^{}="31" \ar@{=>}"30";"31"
& {4.}
}\]
The elements of this diagram can be regarded as generating a strict 2-category by composing 1-cells and 2-cells whenever possible.  In other words, the objects of $\Theta_2$ can be seen as encoding all possible finite compositions that can take place in a 2-category, much as the objects of $\Delta$ can be thought of as listing all the finite compositions that can occur in an ordinary category.

Let us consider a morphism $[4]([2], [3], [0], [1]) \rightarrow [3]([1],[0], [2])$.  The first thing we need to define such a morphism is a map $\delta\colon [4] \rightarrow [3]$ in $\Delta$, for example the one depicted here:
\[ \xymatrix{[0] \ar[rr]^{[2]} \ar[dr] && [1] \ar[rr]^{[3]} \ar[dr] && [2] \ar[rr]^{[0]} \ar[dl] && [3] \ar[rr]^{[1]} \ar[dr] && [4] \ar[dl] \\
& [0] \ar[rr]^{[1]} && [1] \ar[rr]^{[0]} && [2] \ar[rr]^{[2]} && [3]. &} \]
We have maps between the labeling objects when it is sensible to do so, as can be visualized in the above diagram.  For example, to complete the definition here we would need to specify a map $[2] \rightarrow [1]$, the unique map $[0] \rightarrow [0]$, and a map $[0] \rightarrow [2]$.  We think of the unique arrow $2 \rightarrow 3$ in the top diagram as being sent to the composite of the arrow $1 \rightarrow 2$ with the arrow $2 \rightarrow 3$ specified by the chosen map $[0] \rightarrow [2]$. 

More pictorally, one choice of such a morphism is given by:
\[\xymatrix{ {0} \ar@/^2pc/[r]^{}="10" \ar[r]^{}="11" \ar@/_2pc/[r]^{}="12" \ar@{=>}"10";"11" \ar@{=>}"11";"12" \ar@/_1pc/@[red][ddr]
& {1} \ar@/^3pc/[r]^{}="20" \ar@/^1pc/[r]^{}="21"
\ar@/_1pc/[r]^{}="22" \ar@/_3pc/[r]^{}="23" \ar@{=>}"20";"21"
\ar@{=>}"21";"22" \ar@{=>}"22";"23"  \ar@/_.5pc/@[red][ddr]
& {2} \ar[r] \ar@[red][dd]
& {3} \ar@/^1pc/[r]^{}="30" \ar@/_1pc/[r]^{}="31" \ar@{=>}"30";"31" \ar@/^.5pc/@[red][ddr]
& {4} \ar@/^.5pc/@[red][dd] \\
\\
& {0} \ar@/^1pc/@[blue][r]^{}="30" \ar@/_1pc/@[blue][r]^{}="31" \ar@{=>}@[blue]"30";"31"
& {1}  \ar@[blue][r]
& {2} \ar@/^2pc/[r]^{}="10"  \ar[r]^{}="11" \ar@/_2pc/@[blue][r]^{}="12" \ar@{=>}"10";"11" \ar@{=>}"11";"12"
& {3.} } \]
Here the map choices are indicated by giving their image in blue; we have distinguished the map $\delta$ by the arrows in red.
\end{example}

To model $(\infty,2)$-categories, we want to consider functors $X \colon \Theta_2^{\op} \rightarrow \SSets$.  Our first question is how to describe the appropriate Segal conditions, of which we expect to have two, coming from the two ways that the category $\Delta$ appears in the category $\Theta_2$: the ``outside" indexing, and the ``internal" indexing, given by arrow labelings. We can conceptualize these kinds of conditions as follows.  If we apply such a functor $X$ to the object
\[\xymatrix{ {0} \ar@/^2pc/[r]^{}="10"  \ar[r]^{}="11" \ar@/_2pc/[r]^{}="12" \ar@{=>}"10";"11" \ar@{=>}"11";"12"
& {1}  \ar[r]
& {2} \ar@/^1pc/[r]^{}="30" \ar@/_1pc/[r]^{}="31" \ar@{=>}"30";"31"
& {3}
}\]
of $\Theta_2^{\op}$, one Segal condition should give us that the resulting simplicial set should be weakly equivalent to the simplicial set
\[ X \left( \xymatrix{{0} \ar@/^2pc/[r]^{}="10"  \ar[r]^{}="11" \ar@/_2pc/[r]^{}="12" \ar@{=>}"10";"11" \ar@{=>}"11";"12"
& {1} }\right) \times_{X(0)} X \left(\xymatrix{{1}  \ar[r] & {2}} \right) \times_{X(1)} X\left( \xymatrix{{2} \ar@/^1pc/[r]^{}="30" \ar@/_1pc/[r]^{}="31" \ar@{=>}"30";"31"
& {3}} \right). \]
The other Segal condition should tell us that the simplicial set
\[ X \left( \xymatrix{{0} \ar@/^2pc/[r]^{}="10"  \ar[r]^{}="11" \ar@/_2pc/[r]^{}="12" \ar@{=>}"10";"11" \ar@{=>}"11";"12"
& {1} }\right) \]
is weakly equivalent to 
\[ X\left( \xymatrix{{0} \ar@/^1pc/[r]^{}="30" \ar@/_1pc/[r]^{}="31" \ar@{=>}"30";"31"
& {1}} \right) \times_{X(0 \rightarrow 1)}  X\left( \xymatrix{{0} \ar@/^1pc/[r]^{}="30" \ar@/_1pc/[r]^{}="31" \ar@{=>}"30";"31"
& {1}} \right).\]

Once again, we have choices to make as to whether we want to require completeness conditions or discreteness of certain component spaces.  Let us start by considering completeness for both, as Rezk did.   We still retain the analogue of completeness for a complete Segal space, namely that $X([0])$ is weakly equivalent to the subspace of homotopy equivalences in $X([1]([0])$, which we can visualize as
\[ X(0) \overset{\simeq}{\rightarrow} X \left(\xymatrix{{0}  \ar[r] & {1}} \right)_{\heq}.   \]
But we additionally have a higher-dimensional analogue, which says that $X([1]([0])$ is weakly equivalent to the subspace of homotopy equivalences in $X([1]([1])$, which can be visualized as
\[ X \left(\xymatrix{{0}  \ar[r] & {1}} \right) \overset{\simeq}{\rightarrow} X\left( \xymatrix{{0} \ar@/^1pc/[r]^{}="30" \ar@/_1pc/[r]^{}="31" \ar@{=>}"30";"31"
	& {1}} \right)_{\heq}    \]
	
Since $\Theta_2$ is still a Reedy category \cite{berger}, we can consider the category $\SSets^{\Theta_2^{\op}}$ with the Reedy model structure and localize so that the fibrant objects have the desired conditions.  

Given an object $[m]([k_1], \ldots, [k_m])$ of $\Theta_2$, with $m \geq 2$, let $\Theta[m]([k_1], \ldots, [k_m])$ be its associated representable functor $\Theta_2^{\op} \rightarrow \Sets$, thought of as a discrete functor $\Theta_2^{\op} \rightarrow \SSets$.  In analogy with the sub-simplicial space $G[n]^t \subseteq \Delta[n]^t$, define
\[ G[m]([k_1], \ldots, [k_m]) = \Theta[1]([k_1]) \amalg_{\Theta[0]} \cdots \amalg_{\Theta[0]} \Theta[1]([k_m]), \]
and observe that there is a natural inclusion map
\[ G[m]([k_1], \ldots, [k_m]) \rightarrow \Theta[m]([k_1], \ldots, [k_m]). \]
The ``horizontal", or ``outer", Segal condition can be made precise by asking that $X$ be local with respect to all such maps.

The second, ``internal" Segal condition is described similarly.  Given $k \geq 0$, define
\[ \Theta[1](G[k]) = \Theta[1]([1]) \amalg_{\Theta[1]([0])} \cdots \amalg_{\Theta[1]([0])} \Theta[1]([1]), \]
which naturally includes into $\Theta[1]([k])$.  Objects which are local with respect to these maps satisfy the second Segal condition.

As we have seen in other models, we can pause here and consider such functors with no further conditions.

\begin{definition}
A $\Theta_2$-\emph{Segal space} is a Reedy fibrant functor which satisfies both the Segal conditions described above.
\end{definition}

These objects have an associated model structure.
 
\begin{theorem} \cite{rezktheta}
There is a model structure on the category $\SSets^{\Theta_2^{\op}}$ in which the fibrant objects are precisely the $\Theta_2$-Segal spaces.
\end{theorem}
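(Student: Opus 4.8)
The plan is to obtain this model structure as a left Bousfield localization of the Reedy model structure on $\SSets^{\Theta_2^{\op}}$, exactly as the complete Segal space structure $\css$ is obtained from the Reedy structure on $\SSets^{\Deltaop}$. First I would recall that $\Theta_2$ is a Reedy category, so that $\SSets^{\Theta_2^{\op}}$ carries the Reedy model structure, which is cofibrantly generated, left proper, and cellular (indeed it is the injective model structure in this case); these are precisely the hypotheses under which the localization theorem \cite[4.1.1]{hirsch} applies. Next I would assemble the set $S$ of maps to localize at: the ``outer'' Segal maps $G[m]([k_1],\ldots,[k_m]) \to \Theta[m]([k_1],\ldots,[k_m])$ for all $m \geq 2$ and all $[k_i]$; the ``internal'' Segal maps $\Theta[1](G[k]) \to \Theta[1]([k])$ for all $k \geq 0$; and the two completeness maps, namely $\Theta[0] \to E$ and $\Theta[1]([0]) \to \Theta[1](E)$, where $E$ is the nerve of the free-standing isomorphism (the standard map inducing completeness, as in Rezk's original construction for $\css$). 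Localizing the Reedy model structure at $S$ then produces a model structure in which the fibrant objects are exactly the Reedy-fibrant objects that are $S$-local.

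The remaining work is to identify those $S$-local objects with the $\Theta_2$-Segal spaces satisfying the two completeness conditions described in the text. By the definition of $S$-local object, a Reedy-fibrant $X$ is $S$-local iff $\Map^h(B,X) \to \Map^h(A,X)$ is a weak equivalence for every $A \to B$ in $S$; since $X$ is Reedy fibrant and the objects appearing in $S$ are cofibrant, these homotopy mapping spaces compute as honest mapping spaces, and the maps in question unwind precisely to the Segal maps and completeness maps being weak equivalences. For the outer Segal maps this is the standard identification $\Map(G[m](\cdots),X) \cong X_1 \times_{X_0} \cdots \times_{X_0} X_1$ (in the appropriate $\Theta_2$-indexed sense), matching the pullback displayed in the text; for the internal Segal maps one uses that $\Theta[1](-)$ sends the colimit $G[k]$ to the corresponding fibered product of the simplicial sets $X([1]([1]))$ over $X([1]([0]))$; and for the completeness maps one uses that $\Map(E,X)_0 \simeq X_{\heq}$ in each of the two relevant slots. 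This shows the fibrant objects are exactly the $\Theta_2$-Segal spaces that are moreover complete in both directions — which is what the theorem asserts. (If instead one wants the fibrant objects to be merely $\Theta_2$-Segal spaces with no completeness, one drops the two completeness maps from $S$; the argument is identical.)

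The main obstacle I anticipate is purely bookkeeping rather than conceptual: carefully verifying that the mapping-space computations against the representables $\Theta[m]([k_1],\ldots,[k_m])$, $G[m](\cdots)$, $\Theta[1]([k])$, and $\Theta[1](G[k])$ really do reduce to the stated fibered products, which requires knowing the precise combinatorics of the morphisms in $\Theta_2$ (when maps $[k_i] \to [\ell_j]$ are permitted) and checking that the relevant diagrams of representables are genuine pushouts in $\SSets^{\Theta_2^{\op}}$. Once that identification is in hand, everything else is a direct appeal to \cite[4.1.1]{hirsch} together with the fact, due to Berger \cite{berger}, that $\Theta_2$ is a Reedy category, exactly paralleling Rezk's treatment of $\css$ in \cite{rezk}. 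I would also remark that left properness and cellularity of the Reedy structure on $\SSets^{\Theta_2^{\op}}$ follow because these properties hold levelwise for $\SSets$ and are inherited by diagram categories with the Reedy (here injective) structure, so there is no difficulty in meeting the hypotheses of the localization theorem.
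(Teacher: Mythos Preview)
The paper does not give a proof of this theorem; it is a survey and simply cites Rezk \cite{rezktheta}. Your strategy---left Bousfield localization of the Reedy model structure on $\SSets^{\Theta_2^{\op}}$ at an explicit set of maps---is exactly the right one and is essentially how Rezk constructs the model structure.

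That said, there is a mismatch between the theorem you actually argue and the theorem stated. The statement concerns $\Theta_2$-\emph{Segal spaces}, which by the paper's definition are Reedy fibrant functors satisfying only the two Segal conditions, with \emph{no} completeness imposed. Your set $S$, however, includes the two completeness maps, so the localization you describe produces the model structure whose fibrant objects are the $\Theta_2$-\emph{spaces}; that is the content of the \emph{next} theorem in the paper, not this one. You relegate the correct statement to a parenthetical (``drop the two completeness maps from $S$''), but that parenthetical is precisely what is being asked for here. For this theorem the set $S$ should consist only of the outer Segal inclusions $G[m]([k_1],\ldots,[k_m]) \to \Theta[m]([k_1],\ldots,[k_m])$ and the internal Segal inclusions $\Theta[1](G[k]) \to \Theta[1]([k])$; the identification of $S$-local Reedy-fibrant objects with $\Theta_2$-Segal spaces then proceeds exactly as you describe. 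The rest of your outline---Reedy structure via Berger, applicability of \cite[4.1.1]{hirsch}, and the mapping-space computations reducing locality to the Segal conditions---is correct and is the standard argument.
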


What about the completeness conditions?  For the first, we use an underlying simplicial space functor, just as we did for the multisimplicial model.  

\begin{definition}
Let $\tau_\Theta \colon \Delta \rightarrow \Theta_2$ be defined by $\tau_\Theta[m] = [m]([0], \ldots, [0])$.  The \emph{underlying simplicial space} of a functor $X \colon \Theta_2^{\op} \rightarrow \SSets$ is given by $\tau_\Theta^*(X)$.
\end{definition}

This functor $\tau_\Theta^*$ has a left adjoint, which we denote by $(\tau_\Theta)_\#$, To define our first completeness condition, we apply this functor to the map which we use to define complete Segal spaces.  Thus, we localize with respect to the map
\[ (\tau_\Theta)_\#(E^t) \rightarrow (\tau_\Theta)_\# \Delta[0]^t. \]

The second completeness condition is more subtle to define precisely.  The idea is to define an object $\Theta[E]$ which models homotopy 2-equivalences, and then localize with respect to the map
\[ \Theta[E] \rightarrow \Theta[1]([0]). \]
The object $\Theta[E]$ is defined via an intertwining functor which we do not describe here; we refer the reader to \cite[4.4]{rezktheta} for details.

\begin{definition}
A $\Theta_2$-\emph{space} is a Reedy fibrant functor $X \colon \Theta_2^{\op} \rightarrow \SSets$ such that these two Segal conditions and two completeness conditions hold. 
\end{definition}

\begin{theorem} \cite{rezktheta}
There is a cartesian model structure $\Theta_2\css$ on the category of all functors $\Theta_2^{\op} \rightarrow \SSets$ in which the fibrant objects are precisely the $\Theta_2$-spaces.
\end{theorem}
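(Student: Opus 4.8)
The plan is to realize $\Theta_2\css$ as a left Bousfield localization of the Reedy model structure on $\SSets^{\Theta_2^{\op}}$, following exactly the pattern that produces $\css$ from the Reedy structure on $\SSets^{\Deltaop}$. First I would record that $\Theta_2$ is a (dualizable) Reedy category, due to Berger \cite{berger}, so that the Reedy model structure on $\SSets^{\Theta_2^{\op}}$ exists; since it is levelwise $\SSets$ it is combinatorial and left proper, hence satisfies the hypotheses needed to apply the localization theorem \cite[4.1.1]{hirsch}. I would also note that this Reedy structure is itself cartesian: the pushout--product axiom can be checked on generating Reedy cofibrations and reduces to the corresponding statement in $\SSets$.

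Next I would assemble the set $S$ of maps to be inverted: the ``outer'' Segal maps $G[m]([k_1],\ldots,[k_m]) \rightarrow \Theta[m]([k_1],\ldots,[k_m])$ for $m \geq 2$; the ``internal'' Segal maps $\Theta[1](G[k]) \rightarrow \Theta[1]([k])$ for $k \geq 2$; the first completeness map $(\tau_\Theta)_\#(E^t) \rightarrow (\tau_\Theta)_\#\Delta[0]^t$; and the second completeness map $\Theta[E] \rightarrow \Theta[1]([0])$. Applying the localization theorem to the Reedy model structure on $\SSets^{\Theta_2^{\op}}$ and the set $S$ yields a model structure on the same underlying category whose cofibrations are the Reedy cofibrations, whose weak equivalences are the $S$-local equivalences, and whose fibrant objects are the Reedy-fibrant $S$-local objects. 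To finish identifying the fibrant objects with $\Theta_2$-spaces, one unwinds $\Map^h(A,X)\rightarrow \Map^h(B,X)$ for each $A\rightarrow B$ in $S$: when $X$ is Reedy fibrant these homotopy mapping spaces are computed by the strict ones, and locality with respect to the four families translates precisely into the two Segal conditions and the two completeness conditions.

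The substantive part is the cartesian property. By a standard argument, the pushout--product axiom for the localized structure follows once one knows that the class of $S$-local equivalences is closed under cartesian product with an arbitrary object. Since every object is a colimit of representables and the cofibrations are the Reedy cofibrations, this reduces to showing that for each generating map $A\rightarrow B$ in $S$ and each representable $\Theta[p]([\ell_1],\ldots,[\ell_p])$, the map $A \times \Theta[p]([\ell_1],\ldots,[\ell_p]) \rightarrow B \times \Theta[p]([\ell_1],\ldots,[\ell_p])$ is an $S$-local equivalence. The core is then a combinatorial analysis of products of the representables $\Theta[m]([k_1],\ldots,[k_m])$ and of the ``spine'' objects $G[m]([k_1],\ldots,[k_m])$ and $\Theta[1](G[k])$, showing that each such product decomposes, up to an $S$-local equivalence, into pieces lying in the saturated class generated by $S$. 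Here one leverages the already-known cartesianness of $\css$ (to handle the factors indexed by the internal $[k_i]$) and of the Joyal structure $\Qcat$, together with the compatibility of $(\tau_\Theta)_\#$ and $\tau_\Theta^*$ with products and their interaction with the two completeness maps.

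I expect this product decomposition to be the principal obstacle: one must make precise how the wreath-product combinatorics underlying $\Theta_2$ interacts with both Segal conditions simultaneously, so that products of spines remain spine-like and products of representables remain (up to $S$-equivalence) retracts of representables. This is exactly the technical heart of Rezk's original treatment \cite{rezktheta}, and the rest of the proof is formal given the localization machinery.
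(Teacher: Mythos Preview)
The paper does not give a proof of this theorem; it is a survey, and the statement is simply attributed to Rezk \cite{rezktheta} with no argument supplied. There is therefore nothing in the paper to compare your proposal against.

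That said, your outline is a reasonable high-level sketch of the strategy in Rezk's original paper: realize the model structure as a left Bousfield localization of the Reedy structure at the Segal and completeness maps, and then verify cartesianness by checking stability of the localizing maps under product with representables. Where Rezk's actual argument differs from your sketch is in its packaging: rather than carrying out the product decomposition for $\Theta_2$ by hand, he sets up a general framework of \emph{cartesian presentations} and proves that applying the $\Theta$-construction to a cartesian presentation yields another cartesian presentation. The $\Theta_2$ case is then obtained by applying this machine once to the presentation defining $\css$. This inductive formulation is what makes the $\Theta_n$ case go through uniformly and replaces the direct combinatorial analysis you describe with a reduction to the already-established case one level down; in particular, your appeal to the cartesianness of $\css$ is exactly the input to that reduction, but the intertwining functor and the precise form of the localizing set do real work that your sketch leaves to the phrase ``wreath-product combinatorics.''
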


Once again, we want to have a notion of Dwyer-Kan equivalence between $\Theta_2$-spaces which is appropriately fully faithful and essentially surjective.  To do so, we need notions of mapping objects and a homotopy category.

\begin{definition}
Given a functor $X \colon \Theta_2^{\op} \rightarrow \SSets$ and any $(x, y) \in X[0]_0 \times X[0]_0$, we define the \emph{mapping object} $M_X^\Theta(x, y) \colon \Deltaop \rightarrow \SSets$, evaluated at the object $[k]$ of $\Delta$, as the pullback of the diagram
\[ \{(x, y)\} \rightarrow X[0]\times X[0] \leftarrow X[1]([k]). \]
\end{definition}

For essential surjectivity, we use the underlying simplicial space functor $\tau_\Theta^*$.

\begin{definition}
Let $X$ be a $\Theta_2$-Segal space.  Its \emph{homotopy category} $\Ho(X)$ has $X[0]_0$ as objects and 
\[ \Hom_{\Ho(X)}(x, y) = \Hom_{\Ho(\tau_\Theta^*X)}(x, y). \]
\end{definition}

Now we can make our definitions of fully faithful and essentially surjective.

\begin{definition}
Let $X$ and $Y$ be $\Theta_2$-Segal spaces. A morphism $f \colon X \rightarrow Y$ is \emph{fully faithful} if for every $x, y \in X[0]$ and every $k \geq 0$ the map
\[ M_X^\Theta(x, y)_k \rightarrow M_X^\Theta(fx, fy)_k \]
is a weak equivalence in $\SSets$.
\end{definition}

\begin{definition}
Let $X$ and $Y$ be $\Theta_2$-Segal spaces.  A morphism $X \rightarrow Y$ is \emph{essentially surjective} if $\Ho(f) \colon \Ho(X) \rightarrow \Ho(Y)$ is an essentially surjective functor of categories.
\end{definition}

We want to consider these notions for more general functors $\Theta_2^{\op} \rightarrow \SSets$, but we need to localize first. Let us denote by $LX$ the functorial localization of $X$ in the model structure for $\Theta_2$-Segal spaces.

\begin{definition}
Let $X$ and $Y$ be objects of $\SSets^{\Theta_2^{\op}}$.  A map $X \rightarrow Y$ is a \emph{Dwyer-Kan equivalence} if the associated map $LX \rightarrow LY$ is fully faithful and essentially surjective.  
\end{definition}

As in other models, we have the following result.

\begin{theorem} \label{dkwkequiv}
Let $X, Y \colon \Theta_2^{\op} \rightarrow \SSets$ be $\Theta_2$-Segal spaces.  A map $X \rightarrow Y$ is a Dwyer-Kan equivalence if and only if it is a weak equivalence in $\Theta_2\css$.
\end{theorem}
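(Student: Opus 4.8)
The plan is to mirror the proof of the analogous statement for complete Segal spaces (\cite[7.6, 7.7]{rezk}), which is itself the template for the multisimplicial result cited above. The two directions are quite different in character. For the easy direction, suppose $f \colon X \rightarrow Y$ is a weak equivalence in $\Theta_2\css$ between $\Theta_2$-Segal spaces. Since $\Theta_2$-spaces are the fibrant objects of $\Theta_2\css$, and $\Theta_2\css$ is obtained from the $\Theta_2$-Segal space model structure by further localization, a weak equivalence between $\Theta_2$-Segal spaces in $\Theta_2\css$ is detected by mapping into $\Theta_2$-spaces. One then wants to extract fully faithfulness and essential surjectivity. For this I would use that the mapping object $M_X^\Theta(x,y)$ is built by a (homotopy) pullback of the Reedy-fibrant diagram $\{(x,y)\} \to X[0] \times X[0] \leftarrow X[1]([k])$, so that $f$ induces weak equivalences on all the $M$'s, and that $\Ho$ is a functor of the homotopy-invariant data $\tau_\Theta^* X$, on which $f$ also induces a weak equivalence of (complete) Segal spaces, hence an equivalence of homotopy categories after localization, which is in particular essentially surjective.

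The substantive direction is the converse: a Dwyer-Kan equivalence $f \colon X \to Y$ of $\Theta_2$-Segal spaces is a weak equivalence in $\Theta_2\css$. The strategy is to reduce to the $(\infty,1)$-case twice over. First, replacing $X$ and $Y$ by fibrant-replacements (which does not change whether $f$ is a weak equivalence, by two-out-of-three and the fact that $LX \to X$, $LY \to Y$ are already weak equivalences of $\Theta_2$-Segal spaces once $X,Y$ are $\Theta_2$-Segal spaces), we may assume $X$ and $Y$ are $\Theta_2$-spaces. Now I would argue levelwise on the outer simplicial direction: using the (outer) Segal condition, $X[m]([k_1],\dots,[k_m])$ decomposes up to weak equivalence as an iterated homotopy fiber product of copies of $X[1]([k_i])$ over copies of $X[0]$, and the mapping objects $M_X^\Theta(x,y)$ assemble precisely this $X[1]([-])$ data fiberwise over $X[0]\times X[0]$. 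Since $f$ is fully faithful, it is a weak equivalence on every $M_X^\Theta(x,y)$; since $f$ is essentially surjective and $\tau_\Theta^* X \to \tau_\Theta^* Y$ is a map of complete Segal spaces which is a Dwyer-Kan equivalence (full faithfulness of $f$ on $M$ restricts, via $[k]=[0]$, to full faithfulness of $\tau_\Theta^* f$, and essential surjectivity is inherited by definition of $\Ho$), Rezk's theorem \cite[7.6, 7.7]{rezk} applies to $\tau_\Theta^* f$, giving that it is a weak equivalence in $\css$, hence a levelwise weak equivalence $X[0] \to Y[0]$, $X([1]([0])) \to Y([1]([0]))$, and so on. Combining the equivalence on objects ($X[0]\to Y[0]$) with the equivalence on mapping objects and the outer Segal decomposition, one concludes by induction that $X[m]([k_1],\dots,[k_m]) \to Y[m]([k_1],\dots,[k_m])$ is a weak equivalence for all objects of $\Theta_2$, i.e. $f$ is a levelwise, hence Reedy, weak equivalence; since levelwise equivalences are weak equivalences in the localized model structure $\Theta_2\css$, we are done.

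The main obstacle I expect is the bookkeeping in the surjectivity step: extracting from "essentially surjective on $\Ho$" together with "fully faithful on mapping objects" the conclusion that $X[0] \to Y[0]$ is a weak equivalence of simplicial sets, not merely a $\pi_0$-surjection. This is exactly the place where, in Rezk's argument, completeness is essential — the space of objects is recovered from the space of homotopy equivalences, which in turn is cut out of the morphism data on which $f$ is already controlled. Here one must invoke \emph{both} completeness conditions of a $\Theta_2$-space: the first (on $\tau_\Theta^* X$) handles $X[0] \simeq X([1]([0]))_{\heq}$ as in the $(\infty,1)$-case, while the second ($X([1]([0])) \simeq X([1]([1]))_{\heq}$) is what lets the full-faithfulness hypothesis (which a priori only sees $M_X^\Theta(x,y)_k$, i.e. the $X([1]([k]))$ fibers) propagate correctly to give an equivalence on $X([1]([0]))$ itself rather than just on its space of $2$-morphisms. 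Making this propagation precise — essentially a relative version of Rezk's completeness argument applied fiberwise over $X[0]\times X[0]$ and then bootstrapped up to $X[0]$ — is the technical heart of the proof; the rest is an induction over the Reedy structure of $\Theta_2$ using the two Segal conditions.
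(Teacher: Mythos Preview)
The paper is a survey and does not give a proof of this theorem; it is stated with the preface ``As in other models, we have the following result,'' pointing implicitly to Rezk's argument \cite[7.6, 7.7]{rezk} and its multisimplicial analogue \cite{inftyn2} as the template. Your proposal follows precisely that template, so in spirit it matches what the paper intends.

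There is, however, a real gap in your reduction step. You write that after fibrant replacement in $\Theta_2\css$ ``we may assume $X$ and $Y$ are $\Theta_2$-spaces,'' and you justify this by two-out-of-three for weak equivalences in $\Theta_2\css$. That handles the \emph{conclusion} --- whether $f$ is a weak equivalence in $\Theta_2\css$ is indeed unchanged by completion --- but it does nothing for the \emph{hypothesis}: to run your argument on $\widehat f$ you must know that the completion maps $X \to \widehat X$ and $Y \to \widehat Y$ are themselves Dwyer--Kan equivalences, so that the DK property transfers along two-out-of-three for DK equivalences. That fact is not automatic; it is the $\Theta_2$-analogue of Rezk's theorem that the completion of a Segal space is a Dwyer--Kan equivalence, and it requires its own argument. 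Your parenthetical about ``$LX \to X$ \ldots\ already weak equivalences of $\Theta_2$-Segal spaces'' conflates the Segal localization $L$ (used to \emph{define} DK equivalences, and a levelwise equivalence when $X$ is already a $\Theta_2$-Segal space) with the further completeness localization, which is \emph{not} levelwise and is the one you actually need. The same gap recurs in your easy direction: a weak equivalence in $\Theta_2\css$ between non-complete $\Theta_2$-Segal spaces is not levelwise, so you cannot directly read off equivalences on mapping objects; again one must complete and invoke that completion is DK. Once that lemma is established, the remainder of your outline --- applying \cite[7.7]{rezk} to $\tau_\Theta^* f$ to control $X[0]$, then bootstrapping through the two Segal conditions --- is the correct shape, and the difficulty you flag at the end is genuinely where the work in the complete case lies.
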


Now, we would like to compare this model structure to one of those previously developed to show that $\Theta_2$-spaces give good models for $(\infty,2)$-categories.  It is convenient to compare them to 2-fold complete Segal spaces, for which we need a way to relate the categories $\Theta_2$ and $\Delta \times \Delta$.

We define the functor $d \colon \Delta \times \Delta \rightarrow \Theta_2$ by
\[ ([m], [k]) \mapsto [m]([k], \ldots, [k]). \]
As the name suggests, we can think of $d$ as a kind of diagonal functor.  As an example, consider the object $([2], [1])$, which maps to $[2]([1], [1])$.  Visually, this assignment takes the diagram
\[ \xymatrix{\bullet \ar[r] \ar[d] & \bullet \ar[r] \ar[d] & \bullet \ar[d] \\
\bullet \ar[r] & \bullet \ar[r] & \bullet} \]
to
\[ \xymatrix{ { \bullet} \ar@/^1pc/[r]^{}="30" \ar@/_1pc/[r]^{}="31" \ar@{=>}"30";"31"
& {\bullet} \ar@/^1pc/[r]^{}="30" \ar@/_1pc/[r]^{}="31" \ar@{=>}"30";"31"
& {\bullet}.
}\]
Observe that this functor is modeling the kind of ``compression" of vertical morphisms that we wanted in the passage from a multisimplicial model to a $\Theta_2$-model.

The induced functor
\[ d^* \colon \SSets^{\Theta_2^{\op}} \rightarrow \SSets^{\Deltaop \times \Deltaop} \]
has both a left and a right adjoint, given by left and right Kan extension.  We are interested here in the right adjoint, which we denote by $d_*$.

\begin{theorem} \cite{inftyn2}
The adjunction $(d^*, d_*)$ induces a Quillen equivalence
\[ d^* \colon \Theta_2Sp \rightleftarrows \css(\css) \colon d_*. \]
\end{theorem}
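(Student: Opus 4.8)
The plan is to apply the standard characterization of Quillen equivalences \cite{hovey}: one checks that $(d^*, d_*)$ is a Quillen pair, that the left adjoint $d^*$ reflects weak equivalences between cofibrant objects, and that for every fibrant object $Y$ of $\css(\css)$ --- that is, every $2$-fold complete Segal space --- the counit $d^*d_*Y \to Y$ is a weak equivalence in $\css(\css)$. Since every object of $\Theta_2\css$ is cofibrant, the reflection condition just says that $d^*$ reflects weak equivalences; and by Ken Brown's lemma a left Quillen functor out of a model category in which every object is cofibrant automatically \emph{preserves} all weak equivalences, so in fact we expect to obtain the cleaner statement that $f$ is a weak equivalence in $\Theta_2\css$ if and only if $d^*f$ is one in $\css(\css)$.

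To see that $(d^*, d_*)$ is a Quillen pair, observe that $d^*$ preserves monomorphisms and levelwise weak equivalences, hence is left Quillen for the injective model structures underlying $\Theta_2\css$ and $\css(\css)$, which in these cases coincide with the Reedy ones. As both $\Theta_2\css$ and $\css(\css)$ are left Bousfield localizations of these, it then suffices to check that $d^*$ sends each map defining the localization $\Theta_2\css$ to a weak equivalence in $\css(\css)$. Those maps are the outer and inner Segal maps $G[m]([k_1],\ldots,[k_m]) \to \Theta[m]([k_1],\ldots,[k_m])$ and $\Theta[1](G[k]) \to \Theta[1]([k])$, together with the two completeness maps. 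Here one uses that $d$ is built precisely so that the ``outside'' and ``internal'' copies of $\Delta$ inside $\Theta_2$ correspond respectively to the two simplicial directions of a bisimplicial space: applying $d^*$ carries the outer Segal maps to coproducts of horizontal Segal maps of $\css(\css)$, the inner Segal maps to vertical ones, and the two completeness maps to the two completeness maps of $\css(\css)$. This identification is the first computational step.

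The reflection statement comes from the strict compatibility of $d^*$ with the data detecting Dwyer--Kan equivalences. Since $d\circ\tau_\Delta = \tau_\Theta$, we have $\tau_\Delta^* d^* = \tau_\Theta^*$, so $\Ho(\tau_\Delta^* d^*X) = \Ho(\tau_\Theta^*X)$ and $d^*f$ is essentially surjective on homotopy categories exactly when $f$ is. Unwinding the two pullback definitions, for $x,y \in X[0]_0$ the simplicial space $\umap_{d^*X}(x,y)$ is literally $M_X^\Theta(x,y)$: at level $[k]$ both are the pullback of $\{(x,y)\} \to X[0]\times X[0] \leftarrow X[1]([k])$, which is a homotopy pullback when $X$ is Reedy fibrant. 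Combining this with Theorem \ref{dkwkequiv} and its $\css(\css)$-counterpart (a map of Segal objects in $\css$ is a weak equivalence iff it is a Dwyer--Kan equivalence), and replacing objects fibrantly as needed, yields that $f$ is a weak equivalence in $\Theta_2\css$ iff $d^*f$ is one in $\css(\css)$.

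Finally --- and this is where I expect the real work to lie --- one must show that for a $2$-fold complete Segal space $Y$ the counit $d^*d_*Y \to Y$ is a weak equivalence. Writing $d_*$ as a pointwise right Kan extension, $(d_*Y)(c)$ is a strict limit over the comma category $(c\downarrow d)$; the crux is to show that the essential constancy of $Y_{0,*}$ together with the horizontal and vertical Segal conditions forces these strict limits to agree, up to weak equivalence in $\css$, with the corresponding homotopy limits, so that $\tau_\Theta^*$ of the counit and each of its mapping objects are weak equivalences in $\css$. Granting this, the counit is a Dwyer--Kan equivalence, hence a weak equivalence in $\css(\css)$, and $(d^*,d_*)$ is a Quillen equivalence. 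One could alternatively deduce this formally from a Quillen equivalence $\Theta_2\css \simeq \css\text{-}\Cat$ and commutativity up to natural weak equivalence of the resulting triangle of Quillen functors, but the direct route via $d_*$ is the cleanest self-contained argument. Analyzing the comma categories $(c\downarrow d)$ and verifying that the limits defining $d_*Y$ are homotopically correct is the principal obstacle.
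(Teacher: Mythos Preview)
The paper under review is a survey and does not supply a proof of this statement; it simply cites \cite{inftyn2}. There is therefore no argument in the present text against which to compare your proposal.

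That said, your outline is a sound strategy and tracks closely the structure of the argument in \cite{inftyn2}. The two identifications you isolate for the reflection step are exactly right: since $d([m],[0]) = [m]([0],\ldots,[0])$ one has $d\circ\tau_\Delta = \tau_\Theta$, and since $(d^*X)_{1,k} = X[1]([k])$ while $(d^*X)_{0,k} = X[0]$ is constant in $k$, the pullback defining $\umap_{d^*X}(x,y)$ agrees levelwise with the pullback defining $M_X^\Theta(x,y)$. These facts, combined with Theorem \ref{dkwkequiv} and its bisimplicial analogue, do yield that $d^*$ preserves and reflects weak equivalences between appropriately fibrant objects.

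One point deserves more care than you give it. Your claim that $d^*$ carries the $\Theta_2$ localizing maps to ``coproducts of'' the $\css(\css)$ localizing maps is too optimistic as stated: $d^*$ of a representable $\Theta[m]([k_1],\ldots,[k_m])$ is the bisimplicial set $([p],[q]) \mapsto \Hom_{\Theta_2}([p]([q],\ldots,[q]),[m]([k_1],\ldots,[k_m]))$, which is not representable when the $k_i$ differ. What one actually needs is only that $d^*$ of each localizing map becomes a weak equivalence in $\css(\css)$, or dually that $d_*$ of a $2$-fold complete Segal space is a $\Theta_2$-space; either is a genuine computation, not a formal identification. You are also right that the counit analysis is where the work concentrates, and your sketch of approaching it via the comma categories for the pointwise right Kan extension $d_*$ is the natural direct line; this is indeed the principal obstacle, and your proposal correctly isolates it without yet resolving it.
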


This comparison shows us, in particular, that $\Theta_2$ does exactly what we want it to, in replacing the essential constancy condition for one simplicial direction of $\Delta \times \Delta$-diagrams by the compressed diagrams in $\Theta_2$.

One might observe, just as in the last section, that we have been giving preferential treatment to a model that incorporates completeness conditions, rather than asking that certain component spaces be discrete.  Let us remedy this situation now and consider such variants.

\begin{definition}
A $\Theta_2$-\emph{Segal precategory} is a functor $X \colon \Theta_2^{\op} \rightarrow \SSets$ for which the simplicial sets $X[0]$ and $X[1]([0])$ are discrete.  A $\Theta_2$-\emph{Segal category} additionally satisfies both Segal conditions.  
\end{definition}

\begin{theorem} \cite{discrete}
There is a model structure $\Theta_2\Secat$ on the category of $\Theta_2$-Segal precategories in which the fibrant objects are the Reedy fibrant $\Theta_2$-Segal categories.
\end{theorem}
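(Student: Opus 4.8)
The plan is to follow the template used to construct the model structure $\Secat_c$ on Segal precategories in \cite[5.1]{thesis}, replacing the role of $\Delta$ by $\Theta_2$ and carrying along a \emph{pair} of discreteness conditions (on $X[0]$ and on $X[1]([0])$) rather than a single one. First I would check that the category of $\Theta_2$-Segal precategories — the full subcategory of $\SSets^{\Theta_2^{\op}}$ consisting of those $X$ with $X[0]$ and $X[1]([0])$ discrete — is complete and cocomplete, with limits and colimits computed as in $\SSets^{\Theta_2^{\op}}$; this uses only that a limit or colimit of discrete simplicial sets is again discrete, so the two discreteness conditions are preserved. I would then declare the cofibrations to be the monomorphisms, the weak equivalences to be the Dwyer-Kan equivalences in the sense appropriate to $\Theta_2$-Segal (pre)categories, and the fibrations to be the maps having the right lifting property with respect to the acyclic cofibrations.

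The essential ingredient, just as for Segal categories, is a rigidification functor $L$ sending an arbitrary $\Theta_2$-Segal precategory to a weakly equivalent $\Theta_2$-Segal category, together with a natural map $X \to LX$. I would build $L$ by the usual transfinite small-object construction that glues in fillers for the two families of Segal maps, namely $G[m]([k_1],\ldots,[k_m]) \to \Theta[m]([k_1],\ldots,[k_m])$ for $m \geq 2$ and $\Theta[1](G[k]) \to \Theta[1]([k])$ for $k \geq 2$, but where at each stage one applies the discretization that replaces the simplicial sets in degrees $[0]$ and $[1]([0])$ by their sets of components, so as to remain inside the category of $\Theta_2$-Segal precategories. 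This is the two-dimensional analogue of the functor of \cite[\S 5]{thesis}; with it in hand, a map $f \colon X \to Y$ is a weak equivalence exactly when $Lf$ is fully faithful and essentially surjective, i.e.\ a Dwyer-Kan equivalence of $\Theta_2$-Segal categories.

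Granting $L$, the verification of the axioms proceeds as in \cite[5.1]{thesis} and \cite{pel}: two-out-of-three and closure under retracts are formal, and the two factorization axioms are produced by a small-object argument using generating (acyclic) cofibrations assembled from the boundary inclusions $\partial\Theta[m]([k_1],\ldots,[k_m]) \to \Theta[m]([k_1],\ldots,[k_m])$, modified so as to be maps of precategories, together with the Segal localizing maps above and suitable interval-type maps. The fibrant objects are then identified by unwinding the right lifting property against the generating acyclic cofibrations: an object is fibrant precisely when it is Reedy fibrant and satisfies both Segal conditions, which is exactly a Reedy fibrant $\Theta_2$-Segal category, the comparison with the weak equivalences going through a characterization of Dwyer-Kan equivalences parallel to Theorem \ref{dkwkequiv}. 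I expect the main obstacle to be the same one that makes the Segal category case delicate: the naive small-object argument does not stay inside $\SSets^{\Theta_2^{\op}}_{\disc}$, so one must check that the discretization correction applied at each stage of the factorizations does not alter the relevant homotopy types and is compatible with the required lifting properties — concretely, that the generating cofibrations can be chosen among maps of $\Theta_2$-Segal precategories whose pushouts are again precategories.
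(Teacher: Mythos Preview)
The paper does not contain a proof of this statement: it is a survey, and the theorem is simply cited from \cite{discrete}, a paper listed as ``in preparation.''  There is therefore no proof in the present paper against which to compare your proposal.

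That said, your outline is the natural one and is almost certainly the strategy of \cite{discrete}: the model structure $\Secat_c$ from \cite[5.1]{thesis} is the template, and the passage from $\Delta$ to $\Theta_2$ together with the second discreteness condition on $X[1]([0])$ is exactly the new content.  You have correctly identified the genuine technical obstacle, namely that the small-object argument on the generating (acyclic) cofibrations does not automatically stay inside the precategory subcategory, so the generating maps must be replaced by ``reduced'' versions whose pushouts preserve discreteness at $[0]$ and $[1]([0])$.  One point to be careful about: in the $\Secat_c$ construction the correction is not literally ``replace by $\pi_0$'' at each stage, but rather a more structured reduction functor that collapses the offending simplicial sets compatibly with the face and degeneracy maps landing in them; the analogue here must simultaneously handle the maps between $X[0]$ and $X[1]([0])$, and one should check that forcing $X[1]([0])$ to be discrete does not inadvertently disturb the homotopy type of the higher $X[1]([k])$ through the degeneracy $X[1]([0]) \to X[1]([k])$.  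This is where the real work lies, and your proposal acknowledges it without resolving it.
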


But, as before, we have gone to the other extreme and required two discreteness conditions; we could instead mix and match the completeness and discreteness assumptions.  As with the multisimplicial models, only some of these combinations give distinct models.

\begin{definition}
A $\Theta_2$-\emph{Segal $[0]$-precategory} is a functor $X \colon \Theta_2^{\op} \rightarrow \SSets$ such that the simplicial set $X[0]$ is discrete.  It is a $\Theta_2$-\emph{Segal $[0]$-category} if additionally it satisfies both Segal conditions and the completeness condition that 
\[ X[1]([0]) \simeq X[1]([1])_{\heq}. \]
\end{definition}

\begin{theorem} \cite{discrete}
There is a model structure $\Theta_2\Se^{[0]}\Cat$ on the category of $\Theta_2^{\op}$-Segal $[0]$-precategories in which the fibrant objects are the Reedy fibrant $\Theta_2$-Segal $[0]$-categories.  
\end{theorem}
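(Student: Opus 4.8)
The plan is to follow the template used in \cite{discrete} to construct $\Theta_2\Secat$, which in turn mirrors the construction of $\Secat_c$ in \cite{thesis}: first equip the category of $\Theta_2$-Segal $[0]$-precategories with an auxiliary model structure in which the weak equivalences are only the levelwise (Reedy) weak equivalences, and then localize so that the fibrant objects acquire the Segal and completeness conditions. The only structural difference from the $\Theta_2\Secat$ case is that $X[1]([0])$ is no longer required to be discrete, so exactly one of Rezk's two completeness conditions — the one expressing $X[1]([0]) \simeq X[1]([1])_{\heq}$ — has to be imposed through the localization, while discreteness of $X[0]$ continues to do the work of the other.

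First I would verify that the category of $\Theta_2$-Segal $[0]$-precategories is complete and cocomplete, with limits and colimits computed as in $\SSets^{\Theta_2^{\op}}$; this uses only that a limit or colimit of discrete simplicial sets is again discrete, so the condition that $X[0]$ be discrete is preserved. Next I would put on this category a cofibrantly generated model structure whose cofibrations are the monomorphisms — so that every object is cofibrant and the structure is left proper — and whose weak equivalences are the levelwise Reedy weak equivalences. This is the step I expect to be the main obstacle: the auxiliary structure cannot be obtained by simply restricting the Reedy model structure on $\SSets^{\Theta_2^{\op}}$, because a Reedy (cofibration, acyclic fibration)-factorization of a map of $\Theta_2$-Segal $[0]$-precategories produces an intermediate object whose degree-$[0]$ space is only weakly discrete, hence outside the subcategory. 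One must therefore exhibit explicit sets of generating cofibrations and generating acyclic cofibrations — adapted from those used for $\Secat_c$ and $\Theta_2\Secat$, suitably ``discretized'' in the degree-$[0]$ slot — and check the hypotheses of a recognition theorem for cofibrantly generated model structures (e.g.\ \cite[11.3.1]{hirsch}), the delicate points being the small-object condition and the acyclicity of the generating acyclic cofibrations inside this enlarged category.

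Granting the auxiliary structure, I would then localize it with respect to the set consisting of the ``horizontal'' Segal maps $G[m]([k_1],\dots,[k_m]) \to \Theta[m]([k_1],\dots,[k_m])$, the ``internal'' Segal maps $\Theta[1](G[k]) \to \Theta[1]([k])$, and the single completeness map $\Theta[E] \to \Theta[1]([0])$, where $\Theta[E]$ is Rezk's model for homotopy $2$-equivalences from \cite[4.4]{rezktheta}. It is essential to omit the map $(\tau_\Theta)_\#(E^t) \to (\tau_\Theta)_\#\Delta[0]^t$ governing completeness in degree $[0]$: since $X[0]$ is already discrete, including it would force the space of homotopy equivalences in $X[1]([0])$ to be homotopy discrete, a genuine extra condition that is not part of the definition of a $\Theta_2$-Segal $[0]$-category. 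The choice of localizing set is thus precisely what distinguishes $\Theta_2\Se^{[0]}\Cat$ from both $\Theta_2\css$ and $\Theta_2\Secat$. Because the auxiliary model structure is left proper and combinatorial, the localization exists by \cite{hirsch} or \cite{barwick}, and it takes place within the category of $\Theta_2$-Segal $[0]$-precategories, which is itself locally presentable.

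Finally I would identify the fibrant objects. By the general theory of localization, an object is fibrant in $\Theta_2\Se^{[0]}\Cat$ precisely when it is fibrant in the auxiliary structure — that is, Reedy fibrant — and local with respect to the chosen set of maps. The remaining work is then routine and parallels the identification of $\Theta_2$-spaces among Reedy fibrant $\Theta_2$-diagrams: locality with respect to the two families of Segal maps is equivalent to the two Segal conditions, and locality with respect to $\Theta[E] \to \Theta[1]([0])$ is equivalent to the completeness condition $X[1]([0]) \simeq X[1]([1])_{\heq}$, using Rezk's analysis of $\Theta[E]$. Hence the fibrant objects are exactly the Reedy fibrant $\Theta_2$-Segal $[0]$-categories, as claimed. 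One can also check, as for the other models in the paper, that the $S$-local equivalences of $\Theta_2\Se^{[0]}\Cat$ coincide with the evident notion of Dwyer--Kan equivalence, though this is not needed for the present statement.
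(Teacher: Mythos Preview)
The paper under review is a survey and does not contain a proof of this theorem; it simply cites the result from \cite{discrete}, a paper listed as ``in preparation.''  There is therefore no proof in the present paper against which to compare your proposal.

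That said, your outline is the natural one and matches the strategy the survey sketches for the neighboring model structures (e.g.\ $\Secat_c$ and $\Theta_2\Secat$): build an auxiliary Reedy-type model structure on the subcategory of presheaves with $X[0]$ discrete, then left Bousfield localize at the two families of Segal maps together with the single ``inner'' completeness map $\Theta[E]\to\Theta[1]([0])$.  Your identification of the delicate point --- that Reedy factorizations in $\SSets^{\Theta_2^{\op}}$ need not stay in the subcategory, so one must exhibit adapted generating (acyclic) cofibrations and verify the recognition theorem by hand --- is exactly right, and your explanation of why the ``outer'' completeness map $(\tau_\Theta)_\#(E^t)\to(\tau_\Theta)_\#\Delta[0]^t$ must be omitted is on point.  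One small caution: you should check that the localizing objects, in particular $\Theta[E]$, actually lie in the subcategory of $\Theta_2$-Segal $[0]$-precategories (i.e.\ that their value at $[0]$ is discrete); this is true but deserves a sentence, since $\Theta[E]$ is defined via Rezk's intertwining functor rather than as a representable.
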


We can think of this model structure as giving an intermediate step between Segal 2-categories and 2-fold complete Segal spaces, as given by the following theorem.

\begin{theorem} \cite{discrete}
The inclusion functors and their right adjoints induce Quillen equivalences
\[ \Theta_2\Secat \rightleftarrows \Theta_2\Se^{[0]}\Cat \rightleftarrows
\Theta_2Sp. \]
\end{theorem}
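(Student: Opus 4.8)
The plan is to treat the two adjunctions in parallel, following the template set by the $(\infty,1)$-comparison $\Secat_c \rightleftarrows \css$ and the multisimplicial $(\infty,2)$-comparison $\Secat_c(\Secat_c) \rightleftarrows \Secat_c(\css) \rightleftarrows \css(\css)$: the left map relaxes the discreteness of $X[1]([0])$ to the matching completeness condition, and the right map relaxes the discreteness of $X[0]$. First I would produce the right adjoints. As a category, each of $\Theta_2\Secat$ and $\Theta_2\Se^{[0]}\Cat$ is the full subcategory of $\SSets^{\Theta_2^{\op}}$ determined by requiring $X[0]$ (and, in the first case, also $X[1]([0])$) to be discrete. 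Since $\disc \colon \Sets \rightarrow \SSets$ preserves colimits, being left adjoint to $K \mapsto K_0$, these subcategories are closed under colimits, so the fully faithful inclusions preserve all colimits between locally presentable categories and admit right adjoints $D$ by the adjoint functor theorem. Concretely $D$ is a discretization functor, collapsing $X[0]$ (resp.\ $X[0]$ and $X[1]([0])$) to its set of path components and correcting the remaining values accordingly, exactly as for the right adjoint to $\SSets^{\Deltaop}_{\disc} \hookrightarrow \SSets^{\Deltaop}$.

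Next I would check that each $(I,D)$ is a Quillen pair. The cofibrations of all three model structures are monomorphisms, which $I$ preserves, so $I$ preserves cofibrations. For trivial cofibrations it suffices, by the usual reduction to generators in a left Bousfield localization of a Reedy model structure, to see that $I$ sends each localizing map — the two Segal maps and the discreteness/completeness adjustment — to a weak equivalence of the target. This is a direct check, since applying $I$ changes neither the mapping objects $M_X^\Theta(x,y)$, nor the underlying simplicial space $\tau_\Theta^* X$, nor the homotopy category $\Ho(X)$, and weak equivalences are Dwyer--Kan equivalences in each of the three model structures by Theorem~\ref{dkwkequiv} and its discrete analogues.

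The heart of the matter is showing each $(I,D)$ is a Quillen equivalence. I would use the standard criterion: that $D$ reflects weak equivalences between fibrant objects, and that for every cofibrant $X$ the map $X \rightarrow D(RIX)$, with $R$ a fibrant replacement in the target, is a weak equivalence. Both conditions reduce to an analysis of $D$ on Dwyer--Kan equivalences. The key observation is that when $Y$ is fibrant in the target — a $\Theta_2$-space, resp.\ a $\Theta_2$-Segal $[0]$-category — the relevant completeness condition already forces $Y[0]$, resp.\ $Y[1]([0])$, to be homotopy discrete, so collapsing it to $\pi_0$ affects neither $\Ho(\tau_\Theta^* Y)$ nor the homotopy types of the mapping objects; hence the counit $IDY \rightarrow Y$ is a Dwyer--Kan equivalence. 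This is the circle of ideas in Remark~\ref{cssandsecat}, now applied one categorical level higher and in two steps. The unit condition is the symmetric statement: after fibrant replacement the degree-zero-type data becomes complete, and re-discretizing it returns an object Dwyer--Kan equivalent to the one we started with.

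The main obstacle is precisely this last analysis. A completeness condition and a discreteness condition are genuinely different — $DY$ is typically no longer complete even when $Y$ is — so one cannot simply shuttle fibrant objects back and forth, and $D$ behaves badly on general, non-fibrant objects. One must show that $D$, once restricted to the fibrant objects, nonetheless preserves and reflects Dwyer--Kan equivalences, and that this is compatible with the functorial localization $L$ that enters the very definition of a Dwyer--Kan equivalence: the objects one passes through when computing $L$ of a discrete-but-not-yet-fibrant precategory have genuinely different degree-zero behavior before and after localization, and controlling $D$ along this process is where the real work lies. Everything else is a routine adaptation of the $(\infty,1)$ arguments. Alternatively, one could try to deduce the result by transporting along the already established Quillen equivalences $d^* \colon \Theta_2Sp \rightleftarrows \css(\css) \colon d_*$ and $\Secat_c(\Secat_c) \rightleftarrows \Secat_c(\css) \rightleftarrows \css(\css)$ once compatible comparison functors between the $\Theta_2$-models and the bisimplicial models are set up, but constructing those compatibilities costs roughly as much effort as the direct approach.
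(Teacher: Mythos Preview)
The paper is a survey and does not itself prove this theorem; it only cites \cite{discrete}, a paper in preparation, so there is no in-paper proof to compare against. Your outline is being judged against the $(\infty,1)$-template in \cite[6.3]{thesis}, which is clearly what the cited proof would follow, and your overall architecture---construct the right adjoint $D$ as a discretization, verify the Quillen pair on cofibrations and localizing generators, then prove the Quillen equivalence by showing the counit $IDY \to Y$ is a Dwyer--Kan equivalence for fibrant $Y$---is the correct one.

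There is, however, a genuine error at the step you flag as the ``key observation.'' You assert that completeness ``already forces $Y[0]$, resp.\ $Y[1]([0])$, to be homotopy discrete,'' so that collapsing to $\pi_0$ changes nothing. This is false: completeness says $Y[0] \simeq Y[1]([0])_{\heq}$, and the space of homotopy equivalences is almost never homotopy discrete---for the complete Segal space built from a group $G$ one has $Y_0 \simeq BG$. You invoke Remark~\ref{cssandsecat} in support, but that remark makes exactly the opposite point: complete Segal spaces and Segal categories almost never coincide, precisely because completeness does \emph{not} imply discreteness.

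The correct mechanism is subtler, and is what the $(\infty,1)$-proof actually does. The map $IDY \to Y$ is a Dwyer--Kan equivalence not because $DY$ and $Y$ agree levelwise, but because the homotopical content of $Y[0]$ is \emph{redundant}: completeness guarantees it is already encoded in the equivalences living in $Y[1]([0])$, so discretizing loses nothing \emph{up to Dwyer--Kan equivalence} even though it visibly alters the object. Concretely one must verify that $M^\Theta_{DY}(x,y) \to M^\Theta_Y(x,y)$ is a weak equivalence in $\css$ and that $\Ho(\tau_\Theta^* DY) \to \Ho(\tau_\Theta^* Y)$ is an equivalence of categories; neither is automatic, and both use completeness in an essential, not merely cosmetic, way. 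That is the real content you need to adapt.
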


If we try to make the opposite choice, making $X[1]([0])$ discrete and $X[0] \simeq X[1]([0])_{\heq}$, we actually force $X[0]$ to be discrete, since $X[1]([0])_{\heq}$ is a subspace of a discrete space and thus discrete, and $X[0]$ is a retract of it.  Thus, we recover the Segal 2-category model rather than something new.

The model structures that we have are all compatible with their respective bisimplicial models via the adjoint pair $(d^*, d_*)$.  The Quillen equivalences below not already mentioned are to appear in \cite{discrete2}.

\begin{theorem} 
The functors in the following commutative diagram are left adjoints of Quillen equivalences:
\[ \xymatrix{\Secat(\Secat) \ar[r] \ar[d] & \Secat(\css) \ar[r] \ar[d] & \css(\css) \ar[d] \\
\Theta_2\Secat \ar[r] & \Theta_2\Se^{[0]}\Cat \ar[r] & \Theta_2Sp.} \]
The horizontal functors are given by inclusion, whereas the vertical ones are $d_*$ or the appropriate restriction thereof.
\end{theorem}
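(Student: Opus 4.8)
The plan is to deduce the two Quillen equivalences that have not yet appeared, namely the left-hand and middle vertical functors, from the ones already in hand, using the commutativity of the diagram. Of the seven functors displayed, the two in the top row are left adjoints of Quillen equivalences by the chain $\Secat_c(\Secat_c) \rightleftarrows \Secat_c(\css) \rightleftarrows \css(\css)$ established above (their right adjoints being the evident discretization functors), the two in the bottom row by the chain $\Theta_2\Secat \rightleftarrows \Theta_2\Se^{[0]}\Cat \rightleftarrows \Theta_2Sp$, and the rightmost vertical is the Quillen equivalence coming from the adjunction $(d^*, d_*)$. So what remains is to identify the two other vertical functors as restrictions of $d_*$, to check that the diagram commutes, and to run a two-out-of-three argument for Quillen equivalences.

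First I would check that $d_*$ (right Kan extension along $d$) transports the discreteness conditions in the right way. The key input is a computation of $d_*$ on the two objects of $\Theta_2$ entering the definitions of a $\Theta_2$-Segal precategory and a $\Theta_2$-Segal $[0]$-precategory: the comma category of $d$ over $[0]$ has the object $([0],[0])$ of $\Delta \times \Delta$ as a terminal object, and the comma category of $d$ over $[1]([0])$ has $([1],[0])$ as a terminal object --- here one uses that $[0]$ is terminal in $\Delta$, that $\mathrm{id}_{[1]}$ is terminal in $\Delta/[1]$, and the explicit description of morphisms out of a $d$-image in $\Theta_2$, whose ``internal'' component is always forced to be a unique map to $[0]$. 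Hence for any bisimplicial space $X$ one obtains $(d_*X)[0] \cong X_{0,0}$ and $(d_*X)([1]([0])) \cong X_{1,0}$. It follows that $d_*$ sends a Segal category object in complete Segal spaces, for which $X_{0,0}$ is discrete, to a functor whose value at $[0]$ is discrete --- that is, a $\Theta_2$-Segal $[0]$-precategory --- and sends a Segal category object in Segal categories, for which $X_{m,0}$ is discrete for all $m$, to one whose values at both $[0]$ and $[1]([0])$ are discrete --- a $\Theta_2$-Segal precategory. Thus $d_*$ restricts to the two vertical functors claimed, and since these are literally corestrictions of $d_*$ along the fully faithful inclusions the diagram commutes on the nose; dually, the left adjoints of these restricted verticals are $d^*$ followed, where necessary, by the appropriate discretization functor.

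It then remains to verify that the restricted adjunctions are Quillen adjunctions for the (variously Reedy-, projective-, and discreteness-constrained) localized model structures and to conclude that they are Quillen equivalences. Here I would use that in all of these model structures the weak equivalences are Dwyer--Kan equivalences, which by the comparison theorems above agree with the weak equivalences detected in the ambient presheaf categories; combined with the commutativity of the diagram, this lets one transport the Quillen-equivalence property around each square. Concretely, applying the two-out-of-three property for Quillen equivalences to the right-hand square --- whose top, bottom, and right edges are Quillen equivalences --- shows the middle vertical $\Secat_c(\css) \rightleftarrows \Theta_2\Se^{[0]}\Cat$ is a Quillen equivalence, and then the left-hand square, whose right edge is now known, gives the same for $\Secat_c(\Secat_c) \rightleftarrows \Theta_2\Secat$. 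The main obstacle is exactly this last bundle of verifications: checking that $d_*$ stays right Quillen after imposing the localization and discreteness conditions, and that the two-out-of-three step goes through despite the square mixing left and right Quillen functors --- equivalently, re-running the proof of the $(d^*, d_*)$ Quillen equivalence with the discreteness conditions carried along throughout. This is the technical content, to appear in \cite{discrete2}.
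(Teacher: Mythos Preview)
The paper does not prove this theorem: immediately before the statement it says the Quillen equivalences not already established ``are to appear in \cite{discrete2}'', so there is no argument here to compare your proposal against, and your own closing deferral to \cite{discrete2} mirrors the paper.

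That said, your outline is the natural strategy, and the comma-category computation showing $(d_*X)[0] \cong X_{0,0}$ and $(d_*X)([1]([0])) \cong X_{1,0}$ is correct, so $d_*$ does restrict to the precategory subcategories as claimed. One point deserves more care than you give it. You say the left adjoints of the restricted verticals are ``$d^*$ followed, where necessary, by the appropriate discretization functor''. For the middle vertical no discretization is needed, since $(d^*W)_{0,k} = W(d([0],[k])) = W[0]$ is already discrete and $d^*$ restricts on the nose. For the leftmost vertical, however, $(d^*W)_{m,0} = W([m]([0],\ldots,[0]))$, and a $\Theta_2$-Segal precategory is not required to be discrete at $[m]([0],\ldots,[0])$ for $m \ge 2$; so $d^*$ does not land in Segal $2$-precategories, and postcomposing with discretization does not repair this because discretization is the \emph{right} adjoint to inclusion, not a left adjoint. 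Thus identifying the left adjoint on the leftmost column, and hence setting up the Quillen pair to which two-out-of-three is to be applied, is genuinely part of the deferred technical content rather than the formality your sketch suggests.
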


\section{Generalizations of quasi-categories}

A natural question a reader might have at this point is whether there are generalizations of quasi-categories with the same kind of flavor.  This question was initially asked by Joyal, who tried to describe the analogues of horn-filling conditions for functors $\Theta_2^{\op} \rightarrow \Sets$ but was not successful.  With the advent of $\Theta_2$-spaces, however, Ara was able to exploit the method of proof for the Quillen equivalence between quasi-categories and complete Segal spaces to describe such structures and give a corresponding model structure which is Quillen equivalent to $\Theta_2Sp$ \cite{ara}. We sketch the main ideas here; the methods used are substantially different than than the ones used thus far in this paper, so we do not go into great detail.

Consider the category $\SSets^{\Theta_2^{\op}}$.  We can think of the objects of this category alternatively as functors $\Theta_2^{\op} \times \Deltaop \rightarrow \Sets$, or in turn as functors $\Deltaop \rightarrow \Sets^{\Theta_2^{\op}}$.  

We want to have a model structure on the category $\Sets^{\Theta_2^{\op}}$ such that the adjoint pair of functors between it and $\SSets^{\Theta_2^{\op}}$ given by inclusion and evaluation at simplicial degree zero gives a Quillen equivalence of model categories with $\Theta_2Sp$, in analogy with the Quillen equivalence between complete Segal spaces and quasi-categories.

The following definition is not particularly precise, but gives an idea of the flavor how we should think of a $\Theta_2$-set. Ara gives a more concrete definition in \cite{ara}.

\begin{definition}
A functor $K \colon \Theta_2^{\op} \rightarrow \Sets$ is a $\Theta_2$-\emph{set} if $K \cong \ev_0 X$ for some $\Theta_2$-space $X \colon \Theta_2^{\op} \rightarrow \SSets$.
\end{definition}

\begin{theorem} \cite[8.5]{ara}
There is a cartesian model structure $\Theta_2\Sets$ on the category of functors $\Theta_2^{\op}$ in which the fibrant objects are the $\Theta_2$-sets.
\end{theorem}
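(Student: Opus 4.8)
The plan is to obtain the model structure by Cisinski's machinery for presheaf categories and then identify its fibrant objects by comparison with $\Theta_2\css$. Since $\Sets^{\Theta_2^{\op}}$ is a presheaf topos, it carries a cellular model (the generating monomorphisms $\partial\Theta[m]([k_1],\dots,[k_m]) \hookrightarrow \Theta[m]([k_1],\dots,[k_m])$), so by Cisinski's theorem it suffices, in order to produce a cofibrantly generated model structure whose cofibrations are exactly the monomorphisms, to specify an exact cylinder (an interval object encoding homotopies) together with a set $\Lambda$ of anodyne extensions closed under the relevant pushout and transfinite-composition operations. First I would take $\Lambda$ to consist of set-level versions of the maps already used by Rezk: (i) the two families of Segal inclusions $G[m]([k_1],\dots,[k_m]) \to \Theta[m]([k_1],\dots,[k_m])$ and $\Theta[1](G[k]) \to \Theta[1]([k])$, and (ii) the two completeness maps $(\tau_\Theta)_\#(E^t) \to (\tau_\Theta)_\#\Delta[0]^t$ and $\Theta[E] \to \Theta[1]([0])$, suitably saturated. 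Cisinski's theorem then yields a model structure $\Theta_2\Sets$ with cofibrations the monomorphisms, weak equivalences the $\Lambda$-local equivalences, and fibrant objects those $K$ with the right lifting property against $\Lambda$.

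Next I would set up the adjunction $i \colon \Theta_2\Sets \rightleftarrows \Theta_2\css \colon \ev_0$, where $i$ sends a $\Theta_2$-set to the constant simplicial diagram on it and $\ev_0$ evaluates at simplicial degree zero, mirroring the Joyal--Tierney pair $i \colon \Qcat \rightleftarrows \css \colon \ev_0$. The key points are that $i$ carries monomorphisms to monomorphisms and the generators of $\Lambda$ to acyclic cofibrations of $\Theta_2\css$ --- precisely the two Segal and two completeness localizing maps --- so that $(i, \ev_0)$ is a Quillen pair. A fibrant $\Theta_2$-space $X$ then has $\ev_0 X$ fibrant in $\Theta_2\Sets$ by adjunction, giving one inclusion of the classes of fibrant objects; conversely, for $K$ fibrant one shows $i(K)$ is weakly equivalent to a $\Theta_2$-space with $\ev_0 i(K) = K$, so the fibrant objects are exactly the $\Theta_2$-sets. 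Promoting $(i, \ev_0)$ to a Quillen \emph{equivalence} --- i.e.\ showing the counit $i\,\ev_0 X \to X$ is a weak equivalence for fibrant $X$ --- is the $\Theta_2$-analogue of the fact that a complete Segal space is determined up to equivalence by its space of objects, and is where the two completeness conditions do their work.

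For cartesianness I would verify the pushout--product axiom directly. The pushout--product of two monomorphisms of $\Theta_2$-sets is again a monomorphism (formal in any topos), so it remains to check that the pushout--product of a generating cofibration with a generator of $\Lambda$ is $\Lambda$-anodyne. This is the combinatorial heart: one reduces to pushout--products of the $\partial\Theta[m](\cdots) \hookrightarrow \Theta[m](\cdots)$ with the generators of $\Lambda$ and shows each decomposes as a transfinite composite of pushouts of elements of $\Lambda$. The Segal-type cases are controlled by the corresponding statement for $\Delta$ in Joyal's theory, applied in the two directions in which $\Delta$ sits inside $\Theta_2$; the interaction with the completeness generator $\Theta[E] \to \Theta[1]([0])$ is the delicate case, and here one can be guided by the already-established fact that $\Theta_2\css$ is cartesian. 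With the pushout--product axiom, the map $\varnothing \to \ast$ a cofibration, and the monoidal unit cofibrant, $\Theta_2\Sets$ is cartesian.

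The main obstacle is not the formal model-category machinery but the combinatorics, exactly as Joyal's own unsuccessful attempt suggests: one must exhibit an explicit, workable class $\Lambda$ of anodyne extensions --- a correct higher-dimensional replacement for inner-horn filling --- whose local objects are \emph{precisely} the presheaves of the form $\ev_0 X$, no more and no fewer. Equivalently, the hard content is proving that $\ev_0 \colon \Theta_2\css \to \Theta_2\Sets$ creates the fibrant objects and that the adjunction $(i,\ev_0)$ is a Quillen equivalence (this is the route taken by Ara \cite{ara}); once that is in place, existence via Cisinski's theorem and cartesianness via the pushout--product axiom are comparatively routine bookkeeping built on the one-dimensional statements for quasi-categories.
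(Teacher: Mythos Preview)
The paper does not give its own proof of this theorem; it simply cites Ara \cite[8.5]{ara} and remarks that ``Ara's proof uses Cisinski's theory of $A$-localizers, as does the comparison with $\Theta_2$-spaces.''  Your plan---build the model structure via Cisinski's machinery on the presheaf topos $\Sets^{\Theta_2^{\op}}$, localize at the set-level Segal and completeness maps, and then identify the fibrant objects through the $(i,\ev_0)$ adjunction with $\Theta_2\css$---is exactly the strategy the paper attributes to Ara, and your sketch of the comparison via $(i,\ev_0)$ matches the Joyal--Tierney-style argument the paper outlines immediately afterward.  So your approach is correct and essentially the same as what the paper reports; you have also correctly located the genuine difficulty (finding a workable class of anodyne extensions whose local objects are precisely the $\ev_0$-images of $\Theta_2$-spaces), which is why Joyal's direct horn-filling attempt did not succeed and why Ara's route through Cisinski localizers and the comparison with $\Theta_2\css$ is needed.
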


Ara's proof uses Cisinski's theory of $A$-localizers, as does the comparison with $\Theta_2$-spaces.  Here, we sketch an argument for the comparison which more closely models the one of Joyal and Tierney for quasi-categories and complete Segal spaces.

The next step is to give a model structure on the category $(\Sets^{\Theta_2^{\op}})^{\Deltaop}$ so that the fibrant objects are essentially constant in the simplicial direction, using the model structure for $\Theta_2$-spaces.  Denoting this model structure by $(\Theta_2\Sets)^{\Deltaop}_{ec}$, the following comparison is not unexpected.

\begin{theorem} \cite[8.4]{ara}
The inclusion and evaluation functors induce a Quillen equivalence
\[ \Theta_2\Sets \rightleftarrows (\Theta_2\Sets)^{\Deltaop}_{ec}. \]
\end{theorem}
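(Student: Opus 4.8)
The plan is to treat this as the $\Theta_2$-analogue of the ``soft half'' of the Joyal--Tierney comparison \cite[4.11]{jt}, and more precisely as an instance of the general fact that, for a combinatorial left proper model category $\mathcal M$ in which every object is cofibrant, the constant-diagram functor $i\colon \mathcal M \to \mathcal M^{\Deltaop}$ and the evaluation functor $\ev_0$ at $[0]\in\Delta$ form a Quillen equivalence between $\mathcal M$ and the ``homotopically constant'' left Bousfield localization of the Reedy model structure on $\mathcal M^{\Deltaop}$; the formal reason the adjunction points this way is that $[0]$ is an initial object of $\Deltaop$, so that $\ev_0$ is right adjoint to $i$. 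Concretely the steps are: (1) build $(\Theta_2\Sets)^{\Deltaop}_{ec}$ and identify its fibrant objects; (2) check that $(i,\ev_0)$ is a Quillen pair; and (3) verify the standard criterion that a Quillen pair is a Quillen equivalence once the right adjoint reflects weak equivalences between fibrant objects and the derived unit is a weak equivalence on cofibrant objects.

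For step (1): since $\Theta_2\Sets$ is of Cisinski type (combinatorial, left proper, every object cofibrant, cofibrations the monomorphisms), the category $(\Theta_2\Sets)^{\Deltaop}$ of $\Deltaop$-diagrams carries the Reedy model structure, with levelwise weak equivalences, and its left Bousfield localizations exist. I would define $(\Theta_2\Sets)^{\Deltaop}_{ec}$ to be the localization at the maps collapsing the simplicial direction, for instance the projections from $\Theta[c]\otimes\Delta[n]$ to the constant diagram on the representable $\Theta_2$-set $\Theta[c]$, for all objects $c$ of $\Theta_2$ and all $n\geq 1$ (here $\Theta[c]\otimes\Delta[n]$ denotes the evident $\Deltaop$-diagram built from the $\Theta_2$-set $\Theta[c]$ and the simplicial set $\Delta[n]$). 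A routine mapping-space computation then identifies the $ec$-fibrant objects as exactly the Reedy fibrant diagrams $Y$ all of whose structure maps $Y_{[0]}\to Y_{[n]}$ are weak equivalences in $\Theta_2\Sets$; this is the precise meaning of ``essentially constant in the simplicial direction''.

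For step (2): $\ev_0$ is right Quillen for the Reedy model structure, because the matching object at $[0]\in\Delta$ is terminal, so a Reedy fibration restricts to a fibration on degree-$0$ objects, while Reedy acyclic fibrations are levelwise acyclic fibrations; hence its left adjoint $i$ is left Quillen. Passing to the localization only enlarges the class of weak equivalences and keeps the cofibrations, and $i$ sends $\Theta_2\Sets$-weak-equivalences to levelwise, hence Reedy, hence $ec$-local, weak equivalences, so $i$ remains left Quillen with right adjoint $\ev_0$ into $(\Theta_2\Sets)^{\Deltaop}_{ec}$. For step (3), the key observation is that the constant diagram $i(A)$ on a fibrant object $A$ of $\Theta_2\Sets$ is already $ec$-fibrant: its matching maps are isomorphisms so it is Reedy fibrant, and it is tautologically essentially constant. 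Thus, given any object $K$ (which is automatically cofibrant) and a functorial fibrant replacement $K\xrightarrow{\simeq}RK$ in $\Theta_2\Sets$, the map $i(K)\to i(RK)$ is a levelwise weak equivalence into an $ec$-fibrant object, hence a fibrant replacement in $(\Theta_2\Sets)^{\Deltaop}_{ec}$; applying $\ev_0$ returns $K\to RK$, so the derived unit is a weak equivalence. And if $f\colon Y\to Z$ is a map of $ec$-fibrant objects with $\ev_0 f$ a weak equivalence, then in the commuting square whose horizontal edges are $\ev_0 f$ and $f_{[n]}$ and whose vertical edges are the structure maps $Y_{[0]}\to Y_{[n]}$ and $Z_{[0]}\to Z_{[n]}$, three edges are weak equivalences (the vertical ones by essential constancy), so $f_{[n]}$ is a weak equivalence for every $n$ by two-out-of-three; thus $f$ is a levelwise, hence $ec$-local, weak equivalence, and $\ev_0$ reflects weak equivalences between fibrant objects. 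These two facts give the Quillen equivalence.

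The hard part is not really in this statement at all: within it, the only points demanding care are the precise identification in step (1) of the $ec$-fibrant objects with the homotopically constant Reedy fibrant diagrams, and the small bookkeeping choice in step (3) of working with the Reedy rather than the injective model structure on $(\Theta_2\Sets)^{\Deltaop}$, precisely so that constant diagrams on fibrant objects remain fibrant and the derived-unit computation becomes immediate. The genuinely difficult work in Ara's program lies elsewhere --- in constructing the model structure $\Theta_2\Sets$ itself via Cisinski's theory of $A$-localizers, and in the subsequent comparison between $(\Theta_2\Sets)^{\Deltaop}_{ec}$ and $\Theta_2Sp$ --- which is why the present statement is, in the words of the text, ``not unexpected''.
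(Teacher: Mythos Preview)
Your overall strategy matches the paper's: the paper does not give a proof here but cites Ara and explicitly says the argument should be the Joyal--Tierney one you are running. So the plan is right. There is, however, a concrete error in your step (3).

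You claim that for a fibrant $A$ in $\Theta_2\Sets$ the constant simplicial object $i(A)$ is Reedy fibrant because ``its matching maps are isomorphisms''. That is false: you have confused matching with latching. For a simplicial object $X$, the matching object at level $1$ is $M_1X = X_0\times X_0$ via $(d_1,d_0)$, so for the constant diagram the matching map at level $1$ is the diagonal $A\to A\times A$, which is essentially never a fibration in a Cisinski-type model structure. What \emph{is} true is that the latching maps of $i(A)$ are isomorphisms in positive degrees, so $i(A)$ is Reedy \emph{cofibrant}. Relatedly, your aside that you chose ``Reedy rather than injective'' to make constant diagrams fibrant is doubly off: for $\Deltaop$-diagrams in a presheaf category the Reedy and injective structures coincide (since $\Delta$ is elegant), and it is the \emph{projective} structure in which constant diagrams on fibrant objects are fibrant.

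The repair is cheap and does not disturb your outline. Either switch to the projective model structure on $(\Theta_2\Sets)^{\Deltaop}$ before localizing, in which case your argument for the derived unit goes through verbatim, or stay with Reedy and insert one extra step: take a Reedy fibrant replacement $i(RK)\xrightarrow{\simeq} Y$; this is a \emph{levelwise} weak equivalence, so $Y$ is still essentially constant and hence $ec$-fibrant, and $\ev_0 Y\simeq \ev_0 i(RK)=RK$. Either way the derived unit is a weak equivalence, and your reflection-of-weak-equivalences argument for the counit side is fine as written.
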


Finally, one can check that the localization for this model structure agrees with the one for $\Theta_2$-spaces.

\begin{theorem}
The model structure $(\Theta_2\Sets)^{\Deltaop}_{ec}$ exactly coincides with the model structure $\Theta_2Sp$.
\end{theorem}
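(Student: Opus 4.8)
The plan is to recognize both model structures through the data that pins a model structure down on a fixed underlying category, namely its cofibrations together with its fibrant objects, and then to invoke the standard fact (see \cite{jt}) that two model structures on one category sharing both of these are equal. Both structures here live on $\SSets^{\Theta_2^{\op}} \cong (\Sets^{\Theta_2^{\op}})^{\Deltaop}$, and agreement of cofibrations is immediate: $\Theta_2Sp$ is a left Bousfield localization of the Reedy model structure on $\SSets^{\Theta_2^{\op}}$, whose cofibrations are the monomorphisms, while $(\Theta_2\Sets)^{\Deltaop}_{ec}$ is a left Bousfield localization of the Reedy model structure (which here agrees with the injective one) on functors $\Deltaop \to \Theta_2\Sets$, whose cofibrations are again exactly the monomorphisms of $\SSets^{\Theta_2^{\op}}$; passing to a left Bousfield localization does not disturb the cofibrations.

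The real content is the comparison of fibrant objects. On the one side these are the $\Theta_2$-spaces. On the other side, unwinding the construction, a fibrant object of $(\Theta_2\Sets)^{\Deltaop}_{ec}$ is a functor $F \colon \Deltaop \to \Sets^{\Theta_2^{\op}}$ that is Reedy fibrant --- in particular each $F([k])$ is a $\Theta_2$-set --- and that is essentially constant, meaning the degeneracies $F([0]) \to F([k])$ are weak equivalences in $\Theta_2\Sets$ for all $k$. Under the identification $F([k])(\theta) = X(\theta)_k$, I would show that this is precisely the data of a $\Theta_2$-space $X$. Starting from a $\Theta_2$-space $X$, the two Segal conditions and the two completeness conditions distribute between the assertion that each space-slice $F([k])$ is a $\Theta_2$-set --- here one leans on the defining characterization of $\Theta_2$-sets as $\ev_0$ of $\Theta_2$-spaces, with Reedy fibrancy controlling the matching objects --- and the assertion that the degeneracies $F([0]) \to F([k])$ are $\Theta_2\Sets$-equivalences. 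Conversely, one rebuilds the Segal and completeness conditions of a $\Theta_2$-space from ``each level a $\Theta_2$-set'' together with ``essentially constant'', running the same bookkeeping backwards, where the already established Quillen equivalence $\Theta_2\Sets \rightleftarrows (\Theta_2\Sets)^{\Deltaop}_{ec}$ and the cartesian structure of $\Theta_2\Sets$ do the heavy lifting --- exactly as in the Joyal--Tierney comparison of quasi-categories with complete Segal spaces.

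The main obstacle is this last identification, which is genuinely the heart of the matter rather than a formality, because $\Theta_2$-sets are defined only implicitly: one must understand concretely how the filling conditions characterizing a $\Theta_2$-set encode the Segal conditions and part of the completeness data, and then see that ``essential constancy in the simplicial direction'' supplies what remains. In practice I would carry this out by importing Ara's explicit description of $\Theta_2$-sets from \cite{ara} and checking that the localizing set of maps defining $(\Theta_2\Sets)^{\Deltaop}_{ec}$ and the set of Segal and completeness maps defining $\Theta_2Sp$ generate the same class of local equivalences on $\SSets^{\Theta_2^{\op}}$; this is the two-dimensional analogue of the corresponding, already delicate, step of \cite{jt}. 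Once the fibrant objects are matched, the conclusion is immediate from the cited fact about model structures.
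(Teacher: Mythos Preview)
Your proposal is correct and follows essentially the same approach the paper sketches: the paper offers no detailed proof here, merely remarking that ``one can check that the localization for this model structure agrees with the one for $\Theta_2$-spaces,'' and your plan---matching cofibrations (monomorphisms in both cases) and then identifying the fibrant objects via Ara's explicit description of $\Theta_2$-sets, in direct analogy with the Joyal--Tierney argument---is exactly how one would carry that out. Your observation that the paper's definition of $\Theta_2$-sets as $\ev_0$ of $\Theta_2$-spaces is too implicit for this purpose, and that one must instead import Ara's intrinsic characterization, is well taken and is the only real subtlety.
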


A natural question, given the comparisons of the previous section, is whether one can obtain a similar model using $\Delta \times \Delta$.  

\begin{question}
Is there an analogous model using bisimplicial sets?
\end{question}

\section{Generalizing to $(\infty,n)$-categories}

In this section we give a very brief introduction to generalizing the ideas of this paper to higher $(\infty,n)$-categories.  In some sense, the hard work is done in knowing how to generalize from $(\infty,1)$-categories to $(\infty,2)$-categories, and then we can proceed in an inductive way.  Of course, there are many variants and combinations, of which we can only give a hint here.

We describe two of the multisimplicial models inductively as follows.

\begin{definition}
A Reedy fibrant functor $(\Deltaop)^n \rightarrow \SSets$ is an $n$-\emph{fold complete Segal space} if it is a complete Segal object in $(n-1)$-fold complete Segal spaces.
\end{definition}

\begin{definition}
A functor $(\Deltaop)^n \rightarrow \SSets$ is a \emph{Segal $n$-category} if it is a Segal category object in $(n-1)$-Segal categories.
\end{definition}

Let us consider the generalization of $\Theta_2$ to $\Theta_n$.  We use the inductive approach of Berger \cite{berger}, which we have essentially used already in our definition of $\Theta_2$ above.  Given a small category $\mathcal C$, define the category $\Theta \mathcal C$ to have objects $[m](c_1, \ldots, c_m)$ where $[m]$ is an object of $\Delta$ and $c_1, \ldots, c_n$ are objects of $\mathcal C$.  A morphism between two such objects is defined similarly as for the ones in $\Theta_2$.

Let $\Theta_0$ be the category with one object and only an identity morphism, and inductively define $\Theta_n = \Theta \Theta_{n-1}$.  Observe that $\Theta_1 = \Delta$, and $\Theta_2$ is exactly the category as we described it previously.

Consider a functor $\Theta_n^{\op} \rightarrow \SSets$.  As we did for $\Theta_2$-spaces, we can define $n$ different Segal conditions: one ``outermost" or ``horizontal" condition, given by the inclusions
\[ G[m](c_1, \ldots, c_m) \rightarrow \Theta[m](c_1, \ldots, c_m) \]
for any object $[m](c_1, \ldots c_m)$ of $\Theta_n$, so that each $c_i$ is an object of $\Theta_{n-1}$.  Then the other $n-1$ Segal conditions can be imported inductively from those for $\Theta_{n-1}$-spaces.

Completeness is similar: the completeness condition which says that the space of objects is weakly equivalent to the space of homotopy equivalences is given by the usual completeness condition on the underlying simplicial space.  The higher completeness conditions are given by incorporating those for $\Theta_{n-1}$-spaces.

\begin{definition}
A $\Theta_n$-\emph{space} is a Reedy fibrant functor $\Thetanop \rightarrow \SSets$ which satisfies these $n$ Segal conditions and $n$ completeness conditions.
\end{definition}

\begin{theorem} \cite{rezktheta}
There is a cartesian model structure $\Theta_n\css$ on the category $\SSets^{\Thetanop}$ in which the fibrant objects are the $\Theta_n$-spaces.
\end{theorem}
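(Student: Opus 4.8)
The plan is to obtain $\Theta_n\css$ as a left Bousfield localization of the Reedy model structure on $\SSets^{\Thetanop}$, following Rezk's construction in \cite{rezktheta}, and to proceed by induction on $n$. The base cases are already available: $n=1$ is the complete Segal space model structure $\css$, and $n=2$ is the model structure $\Theta_2\css$ established above, so in what follows I assume $\Theta_{n-1}\css$ has been constructed and is cartesian.

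First I would fix the ambient model structure. Since $\Theta_n = \Theta\Theta_{n-1}$ is a Reedy category \cite{berger} — in fact an \emph{elegant} one — the diagram category $\SSets^{\Thetanop}$ carries a Reedy model structure, which for an elegant Reedy category coincides with the injective model structure; because limits, colimits, and internal homs in the diagram category are computed levelwise and $\SSets$ is cartesian, this Reedy model structure is itself a cartesian model category in which every object is cofibrant. Next I would write down the localizing set $S$: it consists of the maps encoding the $n$ Segal conditions and the $n$ completeness conditions described above, namely the outermost Segal maps $G[m](c_1,\dots,c_m) \to \Theta[m](c_1,\dots,c_m)$, the inner Segal maps imported from $\Theta_{n-1}$-spaces through the ``arrow-label'' copies of $\Theta_{n-1}$ inside $\Theta_n$, the completeness map $(\tau_\Theta)_\#(E^t) \to (\tau_\Theta)_\#\Delta[0]^t$, and the higher completeness maps obtained inductively from those for $\Theta_{n-1}$-spaces. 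Applying the localization existence theorem \cite[4.1.1]{hirsch} — whose hypotheses hold since the Reedy model structure on $\SSets^{\Thetanop}$ is combinatorial and left proper — produces a model structure whose cofibrations are the Reedy cofibrations and whose fibrant objects are the $S$-local Reedy fibrant objects. Unwinding the definition of $S$-locality against the definitions of the Segal and completeness conditions shows that the $S$-local objects are precisely the $\Theta_n$-spaces, which identifies the fibrant objects.

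The main obstacle is the cartesian property. The strategy, again due to Rezk, is to check that the pushout-product axiom survives localization; since every object is cofibrant and the Reedy structure is already cartesian, this reduces to showing that for each representable $\Theta[m](c_1,\dots,c_m)^t$ and each $S$-local object $X$, the internal hom $X^{\Theta[m](c_1,\dots,c_m)^t}$ is again $S$-local, or equivalently that $\Theta[m](c_1,\dots,c_m)^t \times f$ is an $S$-local equivalence for every generating map $f \in S$. For the Segal maps this is a combinatorial computation: using the description of morphisms in $\Theta_n$, the product of a representable with an outer or inner Segal map decomposes as an iterated pushout of maps that are themselves outer or inner Segal maps, hence lie in $S$. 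For the completeness maps one uses that the underlying-simplicial-space functor $\tau_\Theta^*$ and the intertwining functor defining $\Theta[E]$ interact well enough with cartesian products to push the product past them, together with the inductive hypothesis that $\Theta_{n-1}\css$ is cartesian to control the higher completeness conditions. Assembling these closure statements shows that the localization is a cartesian model category, which completes the induction. I expect the combinatorial bookkeeping for the inner Segal maps — keeping track of how a product with a representable of $\Theta_n$ distributes over the fiber products appearing in the Segal condition in both the outer and the nested directions — to be the most delicate point; everything else is a direct adaptation of the $n=1$ and $n=2$ arguments already in place.
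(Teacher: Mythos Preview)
The paper does not give its own proof of this theorem; it is a survey, and the result is simply attributed to Rezk \cite{rezktheta} without argument. So there is nothing in the paper to compare your proposal against directly.

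That said, your outline is a faithful sketch of Rezk's actual strategy in \cite{rezktheta}: obtain the model structure as a left Bousfield localization of the injective/Reedy model structure on $\SSets^{\Thetanop}$ at the set of Segal and completeness maps, and then verify the cartesian property by checking that the localizing maps are stable under product with representables. Your identification of the delicate point --- the combinatorics of how products with representables of $\Theta_n$ interact with the Segal maps --- is also accurate; in Rezk's paper this is handled via a careful analysis of ``covers'' and the structure of $\Theta_n$, and it is indeed where most of the work lies. One small caveat: Rezk's treatment is not literally organized as an induction on $n$ in the way you frame it, but rather builds the theory for $\Theta\mathcal{C}$ uniformly from suitable input data on $\mathcal{C}$; the effect is the same, and your inductive packaging is a reasonable reformulation.
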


Because this model structure is cartesian, we can hope that there is a corresponding model structure for categories enriched in it.  The proof that we have a model structure on small categories enriched in $\css$ extends to this case.  Then our chain of Quillen equivalences between $\css$-$\Cat$ and $\Theta_2\css$ generalizes to the following result.

\begin{theorem} \cite{inftyn1}, \cite{inftyn2}
There is a chain of Quillen equivalences between $\Theta_{n-1}Sp$-$\Cat$ and $\Theta_n\css$.
\end{theorem}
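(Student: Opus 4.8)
The plan is to follow, step for step, the chain of Quillen equivalences already established for $(\infty,2)$-categories --- namely $\css$-$\Cat \simeq \Secat_f(\css) \simeq \Secat_c(\css) \simeq \css(\css) \simeq \Theta_2Sp$ --- but with the role of the base model category $\css$ played throughout by $\Theta_{n-1}Sp$. What makes this possible is \cite{rezktheta}: for every $n$ the model structure $\Theta_{n-1}Sp$ is cartesian, so it is a legitimate target of enrichment, and the relevant constructions (enriched categories, Segal category objects, complete Segal objects, nerves and their rigidifications) all make sense with $\SSets$, or $\css$, replaced by $\Theta_{n-1}Sp$. I would organize the argument as an induction on $n$, the case $n = 2$ being exactly the cited $(\infty,2)$-comparison.

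First, one needs the model structure on $\Theta_{n-1}Sp$-$\Cat$ whose weak equivalences are the Dwyer--Kan equivalences. This comes from Lurie's criteria \cite[A.3.2.4]{lurie}, exactly as for $\Qcat$-$\Cat$ and $\css$-$\Cat$: the cartesian structure is the main input, and the condition requiring attention --- that weak equivalences in $\Theta_{n-1}Sp$ be stable under filtered colimits --- is checked as in those preceding cases (for $\Qcat$ via \cite{dugspims}), using that these weak equivalences are the local equivalences of a localized Reedy model structure on diagrams of simplicial sets. Next I would introduce the two intermediate ``Segal category object'' model structures $\Secat_f(\Theta_{n-1}Sp)$ and $\Secat_c(\Theta_{n-1}Sp)$, on functors $\Deltaop \to \SSets^{\Theta_{n-1}^{\op}}$ whose degree-zero level is discrete and whose Segal maps are weak equivalences in $\Theta_{n-1}Sp$, together with the ``complete Segal object'' model structure $\css(\Theta_{n-1}Sp)$ on all such functors. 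The comparisons
\[ \Theta_{n-1}Sp\text{-}\Cat \rightleftarrows \Secat_f(\Theta_{n-1}Sp) \rightleftarrows \Secat_c(\Theta_{n-1}Sp) \rightleftarrows \css(\Theta_{n-1}Sp) \]
are then proved exactly as in the $(\infty,1)$-case: rigidification of a Segal category object to an enriched category via the nerve (generalizing the simplicial and bisimplicial nerves) and its left adjoint; the identity functors between the two Segal category object model structures; the discretization adjunction $(I, D)$ --- invoking \cite{rezktheta} wherever a property of the base of enrichment is used. Just as Rezk's theorem matching Dwyer--Kan equivalences with $\css$-equivalences keeps the weak equivalences aligned along the original chain, here one uses the corresponding statement that a map of Segal category objects in $\Theta_{n-1}$-spaces is a Dwyer--Kan equivalence if and only if it is a weak equivalence.

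It remains to compare $\css(\Theta_{n-1}Sp)$ with $\Theta_n\css$. Here one uses $\Theta_n = \Theta\Theta_{n-1}$, so that a complete Segal object in $\Theta_{n-1}$-spaces is literally a functor $\Deltaop \times \Theta_{n-1}^{\op} \to \SSets$, and introduces the diagonal functor $d \colon \Delta \times \Theta_{n-1} \to \Theta_n$, $([m], c) \mapsto [m](c, \ldots, c)$, generalizing the functor used in the $\Theta_2$ versus $\css(\css)$ comparison. The restriction $d^*$ has a right adjoint $d_*$ by right Kan extension, and the claim is that
\[ d^* \colon \Theta_n\css \rightleftarrows \css(\Theta_{n-1}Sp) \colon d_* \]
is a Quillen equivalence. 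The substance is to check that under this adjunction the $n$ Segal and $n$ completeness conditions defining a $\Theta_n$-space match the outermost Segal and completeness conditions of a complete Segal object together with the $n-1$ internal Segal and completeness conditions of a $\Theta_{n-1}$-space imposed pointwise in the $\Delta$-direction and the mapping-object completeness condition, the essential constancy that the multisimplicial model must impose by hand being subsumed by the combinatorics of $\Theta_n$ --- this is the precise sense, noted after the $(\infty,2)$-version of the theorem, in which $\Theta_n$ is the more economical diagram. Composing this equivalence with the previous chain, and noting that $\Theta_n\css$ is again cartesian by \cite{rezktheta} so that the induction can continue, completes the proof.

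The step I expect to be the main obstacle is precisely this last comparison via $(d^*, d_*)$. The comparisons internal to the Segal category chain are essentially formal once $\Theta_{n-1}Sp$ is known to be cartesian and to have the expected Dwyer--Kan description of its weak equivalences; the genuinely new work at each level is the combinatorial analysis of how $\Theta_n$, built as $\Theta\Theta_{n-1}$, repackages a $\Delta \times \Theta_{n-1}$-diagram --- in particular, checking that $d_*$ really lands among $\Theta_n$-spaces and that $d^*$ reflects the localization at the generating Segal and completeness maps, so that the derived adjunction is an equivalence of homotopy categories. That is the technical heart of \cite{inftyn2}.
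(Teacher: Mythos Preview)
Your proposal is correct and follows essentially the same route as the paper (and the cited sources \cite{inftyn1}, \cite{inftyn2}): replace $\css$ by $\Theta_{n-1}Sp$ throughout the chain $\css\text{-}\Cat \leftrightarrows \Secat_f(\css) \leftrightarrows \Secat_c(\css) \leftrightarrows \css(\css) \leftrightarrows \Theta_2Sp$, using the cartesianness of $\Theta_{n-1}Sp$ from \cite{rezktheta} to justify the enriched-category model structure and the intermediate Segal category object structures, and then close with the $(d^*, d_*)$ comparison along $d \colon \Delta \times \Theta_{n-1} \to \Theta_n$. Your identification of the $(d^*, d_*)$ step as the technical heart is also on the mark; that is precisely the content of \cite{inftyn2}, while the earlier links in the chain are the generalizations carried out in \cite{inftyn1}.
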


In particular, we have appropriate notions of complete Segal objects in $\Theta_n$-spaces and Segal category objects in $\Theta_n$-spaces.  The definitions are quite similar to the ones we have given for $n=2$, so we look instead at some interesting things that happen for $n \geq 3$.

Recall the diagonal functor $d \colon \Delta \times \Delta \rightarrow \Theta_2$ which we used to compare 2-fold complete Segal spaces and $\Theta_2$-spaces.  In the case of higher $n$, this functor actually gives us a means of going from $\Delta^n \rightarrow \Theta_n$ incrementally.  

\begin{theorem} \cite{inftyn2}
Define the functor $d \colon \Delta \times \Theta_{n-1} \rightarrow \Theta_n$ by $([m], c) \mapsto [m](c, \ldots, c)$.  Then the adjoint pair
\[ d^* \colon \SSets^{\Thetanop} \rightleftarrows \SSets^{\Deltaop \times \Thetanop} \colon d_* \]
induces a Quillen equivalence between the model structure for $\Theta_n$-spaces and the model structure for complete Segal objects in $\Theta_{n-1}$-spaces.
\end{theorem}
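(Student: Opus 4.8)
The plan is to argue by induction on $n$, the base case $n = 2$ being the Quillen equivalence $d^* \colon \Theta_2Sp \rightleftarrows \css(\css) \colon d_*$ established above: there $\Theta_1 = \Delta$, so a complete Segal object in $\Theta_1$-spaces is exactly a $2$-fold complete Segal space. For the inductive step the aim is to run the same argument with the model structure $\css$ replaced throughout by $\Theta_{n-1}\css$. The only features of $\css$ the argument needs are that it is a cartesian model category, which holds for $\Theta_{n-1}\css$ by the cartesian model structure theorem of Rezk quoted above, and that its weak equivalences are detected by the Dwyer--Kan criteria of homotopical fully faithfulness and essential surjectivity, which holds for $\Theta_{n-1}\css$ by the inductive hypothesis together with the analogue of Theorem \ref{dkwkequiv}. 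I identify $\SSets^{\Deltaop \times \Theta_{n-1}^{\op}}$ with $(\SSets^{\Theta_{n-1}^{\op}})^{\Deltaop}$, so that a complete Segal object in $\Theta_{n-1}$-spaces is a Reedy fibrant functor $\Deltaop \to \SSets^{\Theta_{n-1}^{\op}}$ whose Segal maps are weak equivalences in $\Theta_{n-1}\css$ and which satisfies the completeness conditions generalizing those of Definition \ref{2-foldcss}.

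First I would check that $(d^*, d_*)$ is a Quillen pair. The functor $d^*$ is restriction along $d$, hence cocontinuous with right adjoint $d_*$ given by right Kan extension, and it carries the (monomorphism) cofibrations of the underlying Reedy structures to cofibrations; so by the standard criterion it suffices to show $d_*$ preserves fibrant objects and fibrations between them. The substantive point is the former: if $Y$ is a complete Segal object in $\Theta_{n-1}$-spaces then $d_* Y$ is a $\Theta_n$-space, i.e.\ it satisfies all $n$ Segal and $n$ completeness conditions. The key combinatorial fact is that right Kan extension along the diagonal $[m] \mapsto [m](c,\dots,c)$ converts the $\Delta$-direction Segal condition of $Y$ into the outermost (``horizontal'') Segal condition for $\Theta_n$-spaces and the $\Delta$-direction completeness into the horizontal completeness, while the $\Theta_{n-1}$-internal structure of $Y$ provides the remaining $n-1$ Segal and completeness conditions imported from $\Theta_{n-1}$-spaces; the essential-constancy and mapping-object completeness built into a complete Segal \emph{object} are precisely what make the internal conditions come out correctly. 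The same analysis, read in the other direction, shows $d^*$ carries $\Theta_n$-spaces to complete Segal objects in $\Theta_{n-1}$-spaces.

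Next I would upgrade to a Quillen equivalence via the standard criterion: $d^*$ reflects weak equivalences between cofibrant objects (here, all objects), and the derived counit $d^* d_* Y \to Y$ is a weak equivalence for every fibrant $Y$; equivalently one may work with the derived unit. For reflection I would use that weak equivalences on both sides are detected by homotopical fully faithfulness plus essential surjectivity, that $d^*$ preserves the mapping objects $\umap$ (under the identification above), and that $d^*$ induces an isomorphism on homotopy categories since $\tau_\Delta^* d^* X$ is the underlying simplicial space $\tau_\Theta^* X$; hence a map of $\Theta_n$-spaces is a Dwyer--Kan equivalence exactly when its image under $d^*$ is. For the counit, since $d^*$ of a $\Theta_n$-space is already fibrant (the other direction of the combinatorial step), it suffices to show $d^* d_* Y \to Y$ is a weak equivalence for $Y$ a complete Segal object in $\Theta_{n-1}$-spaces; this is the ``compression'' statement that the amalgamated representables of $\Theta_n$ compute exactly the iterated pullbacks encoded in an essentially constant-in-one-direction $\Delta \times \Theta_{n-1}$-diagram, so that passing from such a diagram to its diagonal and back changes nothing up to weak equivalence.

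The hard part is the combinatorial condition-matching underlying both halves of the argument: verifying that $d_*$, and dually $d^*$, translate the Segal and completeness conditions for a complete Segal object in $\Theta_{n-1}$-spaces into exactly the $n$ Segal and $n$ completeness conditions for a $\Theta_n$-space. This means unwinding the structure of $\Theta_n = \Theta\Theta_{n-1}$ --- which maps $[k_i] \to [\ell_j]$ occur in a morphism of $\Theta_n$, how a representable $\Theta[m](c_1,\dots,c_m)$ decomposes, and how strict pullbacks interact with restriction and right Kan extension along the diagonal --- and it is the one step that does not reduce purely formally to the case $n = 2$, even though conceptually it is the same phenomenon: $\Theta_n$ ``compresses'' one simplicial direction of $\Delta \times \Theta_{n-1}$, trading an essential-constancy condition for the amalgamated representables of $\Theta_n$.
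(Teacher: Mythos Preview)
The paper is a survey and does not supply its own proof of this theorem; the statement is quoted with a citation to \cite{inftyn2}, where the actual argument is carried out.  There is therefore no proof in this paper to compare your proposal against.

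That said, the strategy you outline is the standard one and matches in shape what is done in \cite{inftyn2}: establish $(d^*, d_*)$ as a Quillen pair on the underlying Reedy model structures, check that the pair descends to the localizations (equivalently, that $d^*$ sends the maps defining the $\Theta_n$-space localization to local weak equivalences, or that $d_*$ preserves fibrant objects), and then verify that the derived unit and counit are equivalences.  You correctly flag that the substantive content lies in the combinatorial matching of the Segal and completeness conditions under $d^*$ and $d_*$, and in the analysis of the counit on fibrant objects; this is indeed where the work in \cite{inftyn2} is concentrated.  One small correction: your phrase ``by induction on $n$'' is misleading, since nothing in the argument at level $n$ uses the Quillen equivalence at level $n-1$ as an inductive hypothesis.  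The proof is uniform in $n$, using only that $\Theta_{n-1}\css$ is a cartesian model category (Rezk's theorem, stated above) and that Dwyer--Kan equivalences detect weak equivalences between fibrant objects; the case $n=2$ is an instance, not a base case.
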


We can continue this process, successively picking copies of $\Delta$ off of $\Theta_n$ via the functor $d$:
\[ \underbrace{\Delta \times \cdots \times \Delta}_n \rightarrow \underbrace{\Delta \times \cdots \Delta}_{n-2} \times \Theta_2 \rightarrow \cdots \rightarrow \Delta \times \Theta_{n-1} \rightarrow \Theta_n. \]

\begin{theorem} \cite{inftyn2}
The above chain of functors induces Quillen equivalences between a model structure for $n$-fold complete Segal spaces and $\Theta_n\css$.
\end{theorem}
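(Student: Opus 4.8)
The plan is to argue by induction on $n$, with the preceding theorem — the Quillen equivalence $d^* \colon \SSets^{\Thetanop} \rightleftarrows \SSets^{\Deltaop \times \Theta_{n-1}^{\op}} \colon d_*$ between $\Theta_n$-spaces and complete Segal objects in $\Theta_{n-1}$-spaces — serving as the inductive engine. The key observation is that each arrow in the chain
\[ \underbrace{\Delta \times \cdots \times \Delta}_n \rightarrow \cdots \rightarrow \Delta \times \Theta_{n-1} \rightarrow \Theta_n \]
has the form $\id_{\Delta^{k}} \times d$ for the appropriate diagonal $d \colon \Delta \times \Theta_i \rightarrow \Theta_{i+1}$, so it suffices to show that each such arrow induces a Quillen equivalence between the corresponding localized diagram model structures, and then to concatenate.

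The base case is $n = 2$, where an $n$-fold complete Segal space is by definition an object of $\css(\css)$ and the desired Quillen equivalence with $\Theta_2Sp$ is precisely the $d^*$-theorem already recorded in the previous section (one can equally take $n = 1$, where both sides are $\css$). For the inductive step, assume the chain for $n-1$ induces a chain of Quillen equivalences between the model structure for $(n-1)$-fold complete Segal spaces and $\Theta_{n-1}\css$. The last arrow of the length-$n$ chain, namely $d \colon \Delta \times \Theta_{n-1} \rightarrow \Theta_n$, gives by the preceding theorem a Quillen equivalence $\Theta_nSp \simeq \css(\Theta_{n-1}Sp)$, the model category of complete Segal objects in $\Theta_{n-1}$-spaces. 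It then remains to \emph{prolong} the inductive chain: since an $n$-fold complete Segal space is by definition a complete Segal object in $(n-1)$-fold complete Segal spaces, I want the chain of Quillen equivalences for $n-1$ to upgrade, one simplicial direction at a time, to a chain of Quillen equivalences between $n$-fold complete Segal spaces and $\css(\Theta_{n-1}Sp)$. Concatenating with the $d$-functor equivalence and checking that the composite of the $d$-functors along the chain recovers the stated one finishes the induction.

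The crux is the prolongation step: given a Quillen equivalence $F \colon \mathcal V \rightleftarrows \mathcal W \colon G$ between sufficiently nice cartesian model categories, one must produce a Quillen equivalence between the associated model categories of complete Segal objects, which I denote $\css(\mathcal V)$ and $\css(\mathcal W)$. This decomposes into: (i) the Reedy model structure on $\mathcal V^{\Deltaop}$ carries the levelwise prolongation of $F$ to a Reedy Quillen equivalence $\mathcal V^{\Deltaop} \rightleftarrows \mathcal W^{\Deltaop}$, which is standard Reedy theory; and (ii) the left Bousfield localization imposing the Segal and completeness conditions is preserved, because the localizing maps are assembled from representables together with a fixed interval-type object modeling homotopy equivalences, objects which correspond under the adjunction, so that the right adjoint sends local objects to local objects and the adjunction descends to the localizations and remains a Quillen equivalence. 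As part of (ii) one also needs the existence of the model structures $\css(\mathcal V)$ for each cartesian model category arising in the chain, which follows from the same localization machinery used to construct $\css(\css)$.

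The main obstacle I anticipate is a coherence issue hidden inside (ii): the iterated $\css(\cdots)$ construction produces, a priori, a model category whose fibrant objects satisfy the Segal conditions together with only a partial set of completeness conditions, and one must verify that this agrees with the model structure for $n$-fold complete Segal spaces as defined, with all $n$ completeness conditions active. This is the higher-dimensional analogue of the Johnson-Freyd--Scheimbauer comparison cited in the $n=2$ discussion, and propagating it through the induction — so that at each stage the ``complete Segal objects in $\Theta_i$-spaces'' model structure obtained from the $d$-functor genuinely matches the $\Theta_{i+1}$-space model structure, rather than a weaker localization — is where the real work lies. Once this is settled, the remaining bookkeeping of composing Quillen pairs and identifying underlying functors is routine.
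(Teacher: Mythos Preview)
The paper is a survey and offers no proof of this statement; it records the result with a citation to \cite{inftyn2} and the one-line remark that ``we can continue this process, successively picking copies of $\Delta$ off of $\Theta_n$ via the functor $d$.''  Your inductive scheme, taking the preceding theorem as the engine and a prolongation lemma to push the $(n-1)$-chain across one additional simplicial direction, is precisely a formalization of that remark, so your proposal aligns with what the paper indicates and there is nothing further in the text to compare against.

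On the substance: the decomposition into (i) Reedy prolongation of the levelwise adjunction and (ii) descent through the Segal and completeness localizations is the correct shape, and the obstacle you flag --- identifying the iterated $\css(\cdots)$ construction with the $n$-fold complete Segal space model structure as actually defined --- is real and is where the genuine work in \cite{inftyn2} resides.  One small caution on (ii): your claim that the localizing maps ``correspond under the adjunction'' is safe in this particular chain because every adjunction arising is induced by precomposition with a functor of index categories, so representables are carried to honest diagrams one can control; for an abstract Quillen equivalence of cartesian model categories that step would require more care, so do not phrase the prolongation lemma more generally than the situation demands.
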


Another proof of this comparison, not using model categories, is given by Haugseng \cite{haug}.

The various models in between can be thought of as hybrids between the $\Theta_n$-space model and the multisimplicial model.  In all these cases, one can also replace completeness conditions with discreteness conditions, but as we saw for $(\infty,2)$-categories, we only get distinct models when we choose discreteness models from the bottom up.  So, for example, we could ask that the spaces of objects and of $i$-morphisms, for $i \leq k$, be discrete, and for the spaces of $i$-morphisms, for $k<i<n$, to be weakly equivalent to the space of $i+1$-homotopy equivalences.  We discuss these models, and the comparisons between them, in \cite{discrete} and \cite{discrete2}.

Ara's proof of the comparison between $\Theta_2$-sets and $\Theta_2$-spaces extends to an analogous comparison between $\Theta_n$-sets and $\Theta_n$-spaces.  Again, a natural question is whether there is an analogous way to develop a multisimplicial model, or models corresponding to the various interpolations between $\Delta \times \cdots \times \Delta$ and $\Theta_n$.

\end{document}